\newcommand{\R}{{\mathbf R}}
\newcommand{\Dom}{\mathop{\rm Dom}}
\newcommand{\Domo}{{\Dom_0}}
\newcommand{\spt}{\mathop{\rm spt}}
\newcommand{\f}{{F}}
\newcommand{\p}{{\partial}}
\newcommand{\X}{{X}}
\newcommand{\Y}{{Y}}
\newcommand{\oX}{{\overline X}}
\newcommand{\oY}{{\overline Y}}
\newtheorem{theorem}{Theorem}
\newtheorem{corollary}[theorem]{Corollary}
\newtheorem{definition}[theorem]{Definition}
\newtheorem{example}[theorem]{Example}
\newtheorem{lemma}[theorem]{Lemma}
\newtheorem{proposition}[theorem]{Proposition}
\newtheorem{remark}[theorem]{Remark}
\newenvironment{proof}[1][Proof]{\noindent\textbf{#1.} }{\ \rule{0.5em}{0.5em}}
\newdimen\dummy
\newcommand{\citeasnoun}{\cite}
\begin{document}

\title{Multi- to one-dimensional transportation\thanks{%
The authors are grateful to Toronto's Fields' Institute for the Mathematical
Sciences for its kind hospitality during part of this work. RJM acknowledges
partial support of this research by Natural Sciences and Engineering
Research Council of Canada Grant 217006-08 and -15, and by the US
National Science Foundation under Grant No.\ DMS-14401140 while in
residence at the Mathematical Sciences Research Institute in Berkeley CA
during the spring 2016 semester. Chiappori gratefully
acknowledges financial support from the NSF (Award 1124277). Pass is pleased
to acknowledge support from Natural Sciences and Engineering Research
Council of Canada Grant 412779-2012 and a University of Alberta start-up
grant. \hskip0.75in \copyright \today}}
\author{Pierre-Andr\'{e} Chiappori\thanks{%
Department of Economics, Columbia University, New York, USA
pc2167@columbia.edu}, Robert J McCann\thanks{%
Department of Mathematics, University of Toronto, Toronto, Ontario, Canada
mccann@math.toronto.edu} \ and Brendan Pass\thanks{%
Department of Mathematical and Statistical Sciences, University of Alberta,
Edmonton, Alberta, Canada pass@ualberta.ca.}}

\maketitle

\begin{abstract}
We consider the Monge-Kantorovich problem of transporting
a probability density on $\R^m$ to another on the line,  so as to optimize a
given cost function.  We introduce a nestedness criterion relating the cost to
the densities,  under which it becomes possible to solve this problem uniquely,  by constructing an optimal map one level set at a time.  This map is continuous if the
target density has connected support.  We use level-set dynamics to develop
and quantify a local regularity theory for this map and the Kantorovich potentials
solving the dual linear program. We identify obstructions to global regularity
through examples.

More specifically, fix probability densities $f$ and $g$ on open sets $X \subset \R^m$ and $Y \subset \R^n$ with $m\ge n\ge1$.
Consider transporting $f$ onto $g$ so as to minimize the cost $-s(x,y)$.  We give a non-degeneracy condition (a) on $s \in C^{1,1}$ which ensures the set of $x$ paired with [$g$-a.e.] $y\in Y$ lie in a codimension $n$ submanifold of $X$.
Specializing to the case $m>n=1$,  we discover a nestedness criteria relating $s$ to $(f,g)$ which 
allows us to construct a unique optimal solution in the form of a map $F:X \longrightarrow \overline Y$.
When $s\in C^2 \cap W^{3,1}$ and $\log f$ and $\log g$ are bounded,  
the Kantorovich dual potentials $(u,v)$ satisfy $v \in C^{1,1}_{loc}(Y)$,  and the normal velocity $V$ of $F^{-1}(y)$ with respect to changes in $y$ is given by $V(x) = v''(\f(x))-s_{yy}(x,\f(x))$. Positivity (b) of $V$ locally implies a Lipschitz bound on $\f$; 
moreover, $v \in C^2$ if ${F^{-1}(y)}$ intersects $\partial X \in C^1$ transversally (c).
On subsets where (a)-(c) can be be quantified,  for each integer $r \ge1$ the norms of $u,v \in C^{r+1,1}$ and  $F \in C^{r,1}$
are controlled by these bounds, $\|\log f,\log g, \partial X \|_{C^{r-1,1}}, \|\p X\|_{C^{1,1}}$, $\|s\|_{C^{r+1,1}}$, 
and the smallness of $F^{-1}(y)$.  We give examples showing regularity extends from $X$ to part of $\oX$,
but not from $Y$ to $\oY$.  We also show that when $s$ remains nested for all $(f,g)$,  
the problem in $\R^m \times \R$ reduces to a supermodular 
problem in $\R \times \R$. 
\end{abstract}

\section{Introduction}

In the optimal transportation problem of Monge and Kantorovich,  one is provided with
probability measures $d\mu(x)$ and $d\nu(y)$,  and asked to couple them together so as to minimize
a given transportation cost,  or equivalently to maximize a
given surplus function $s(x,y)$.    The measures  are defined on subsets $\X$ and $\Y$ 
of complete separable metric spaces,  
often Euclidean spaces or manifolds with additional structure,  with the surplus function
$s(x,y)$ either defined by or defining the geometry of the product $\X \times \Y$.
Such problems have a wealth of applications ranging from the pure mathematics
of inequalities, geometry and partial differential equations to topics in 
computer vision,  design, meteorology,  and economics.   These are surveyed in the
books of Rachev and R\"uschendorf \cite{RachevRuschendorf98}, Villani \cite{V} \cite{V2},
Santambrogio \cite{Santambrogio15p} and Galichon \cite{Galichon16p}. 
  It has most frequently been studied
under the assumption that $\X=\Y$,  as in Monge \cite{Monge81}   and Kantorovich \cite{Kant},
or at least that the two spaces $\X$ and $\Y$ have the same finite dimension.  
Monge's question concerned solutions in the form of maps $F:\X \longrightarrow \Y$
carrying $\mu$ onto $\nu$.  At the writing of \cite{V},  Villani described the regularity of such maps 
as the major open problem in the subject.  Through the work of many authors,
an intricate theory has been developed for the case of equal dimensions,  leading up to the restrictive conditions of Ma, Trudinger and Wang \cite{mtw} \cite{TrudingerWang09b} under which the 
solution concentrates on the graph of a {\em smooth} map between $\X$ and $\Y$;  
see \cite{DePhilippisFigalli14} and \cite{Mc14} for complementary surveys.

Despite its relevance to applications,  much less is known when $m\ne n$.   
The purpose of the present paper is to resolve this situation,  at least in the 
case $m>n=1$ motivating our title.  Though it seems not to have received much attention
previously,  we view this as a case interpolating between 
between $m=n=1$,  which can be solved exactly 
in the supermodular case $\p^2 s/\p x \p y>0$,
and the fully general problem $m\ge n>1$.  We develop, for the 
first time, a regularity theory addressing this intermediate case. It is based crucially on a {\em nestedness}
condition introduced simultaneously here and 
in our companion work \cite{ChiapporiMcCannPass15p}.
When satisfied, we show this condition leads
to a unique solution in the form of a Monge mapping.  This solution, moreover is 
semi-explicit: it is based on identifying each level set of the optimal map independently,  
and can presumably be computed with an algorithmic complexity significantly 
smaller than non-nested problems of the same dimensions can be solved.  
Although nestedness is restrictive,
we suspect it may be a requirement for the continuity of optimal maps. Moreover, it
depends subtly on the relation between $\mu,\nu$ and $s$,  which distinguishes it sharply 
from the familiar criteria for mappings, uniqueness and regularity when $m=n$ which, with few exceptions
\cite{KitagawaWarren12}, depend primarily on the geometry and topology of $s$.
Our theory addresses interior regularity,  as well as regularity at some parts of the boundary.
We identify various obstacles to nestedness and regularity along the way,  including
examples which show that higher regularity cannot generally hold on the entire boundary.

The plan for the paper is as follows.  The next section introduces notation, describes
the problem more precisely, and recalls some of its history and related developments.
It is followed by a section dealing with general source and target dimensions $m\ge n \ge1$,  
giving conditions under which the set of $x$ paired with a.e.~$y\in Y$ lie in a codimension $n$
submanifold of $X$.  
In section \S\ref{S:nesting} we specialize to $n=1$,  introduce the notion of nestedness, 
and show that it allows us to obtain a unique solution in the form of a optimal
map between $\mu$ and $\nu$ which, under suitable conditions, is continuous.  The
solution map $F:X \longrightarrow Y$ is constructed one level set at a time.  
Although nestedness generally depends on the relation of $(\mu,\nu)$ to $s$,  
in section \S \ref{S:pseudo-index}
we show that when it happens to hold for all absolutely continuous $(\mu,\nu)$, then $s$ can effectively
be reduced to a super- or submodular function of two real variables $y$ and $I(x)$.
In Section \S \ref{S:level sets}
we explore the motion of the level sets $\f^{-1}(y)$ as $y\in Y$ is varied.  To do so requires additional regularity
of the data $(s,\mu,\nu)$,  under which we are able to deduce some additional regularity of the solution
and give several conditions equivalent to nestedness.  In Section \S\ref{S:examples} we give 
examples of nested and non-nested problems,  including some which illustrate why the unequal dimensions
of the problem prevent $\f$ from extending smoothly to the entire boundary of $X$.  Finally,  in 
Section~\ref{S:regularity},  we develop a complete theory which describes how the higher regularity of $\f$ 
(and the Kantorovich dual potentials) is controlled by certain parameters governing $(s,\mu,\nu,X,Y)$ and various geometrical aspects of the problem, such as the smallness of $\f^{-1}(y)$,  its transversality to $\partial X$,  
and the speed of its motion, which quantifies the uniformity of nestedness by determining 
the separation between distinct level sets.




Our interest in this problem was initially motivated by economic matching problems
with transferable utility \cite{ChiapporiMcCannPass15p}, such as the stable marriage problem  \cite{Knuth97},
in which $\mu$ represents the distribution of female and $\nu$ the distribution of male types,
and $s(x,y)$ represents the marital surplus obtained,  to be divided competitively between a husband of type
$y$ and a wife of type $x$.  The equivalence of optimal transportation to stable matching with transferable
utility was shown in the discrete setting by Shapley and Shubik \cite{SS}.
We adopt this terminology hereafter,  in spite of the fact that $\mu$ 
and $\nu$ might equally well represent producer and consumer locations,  buyer and seller preferences,
etc.   There is no reason a priori to expect the characteristics describing wives and husbands to have the same dimension.  Although we anticipate that this theory will have many other applications,  we are particularly 
aware of its potential for use in the semigeostrophic model of atmosphere and ocean
dynamics,  in which $\mu$ would represent the distribution of fluid in the physical domain and $\nu$ a potential vorticity sheet or filament in dual coordinates~\cite{Cullen06}.

\section{Setting and background results}
\label{S:background}

Given Borel probability measures $\mu$ on $\X \subset \R^m$
and $\nu$ on $\Y \subset \R^n$,  the Monge-Kantorovich problem is to transport
$\mu$ onto $\nu$ so as to optimize the given surplus function $s(x,y)$.   
From Theorem \ref{T:nested} onwards,  we assume $X$ and $Y$ are open,
but for the moment they remain arbitrary.
Assuming 
$s \in C(\X \times \Y)$ to be bounded and continuous
for simplicity,  we seek a Borel measure 
$\gamma \ge 0 $ on $\X \times \Y$ having $\mu$ and $\nu$ for its marginals.  The set 
$\Gamma(\mu,\nu)$ of such $\gamma$ is convex, and weak-$*$ compact
in the Banach space dual to $(C(\X \times \Y),\|\cdot\|_\infty)$.  
Among such $\gamma$,  Kantorovich's problem is to maximize the linear functional
\begin{equation}\label{MK}
MK^* := \max_{\gamma \in \Gamma(\mu,\nu)} \int_{\X \times \Y} s(x,y) d\gamma(x,y).
\end{equation}
We shall also be interested in the structure of the optimizer(s) $\gamma$.
For example,  is there a map $F:\X \longrightarrow \Y$ such that $\gamma$
vanishes outside $Graph(F)$,  and if so,  what can be said about its analytical and
geometric properties?  Such a map is called a {\em Monge} or {\em pure} solution, {\em deterministic coupling},
{\em matching function} or {\em optimal map},
and we have $\gamma = (id \times F)_\#\mu$ where $id$ denotes the identity map on $\X$ in that case \cite{akm}. More generally,
if $F:\X \longrightarrow \Y$ is any $\mu$-measurable map, we define the push-forward $F_\#\mu$ 
of $\mu$ through $F$ by
$$
F_\#\mu(V) = \mu[F^{-1}(V)]
$$
for each Borel $V \subset \Y$.  Thus 
$\Gamma(\mu,\nu) = \{\gamma \ge 0 {\rm\ on}\ \X \times \Y \mid \pi^{X}_\#\gamma = \mu\ \mbox{\rm and}\ 
\pi^Y_\#\gamma =\nu\}$,
where $\pi^X(x,y)=x$ and $\pi^Y(x,y)=y$.  We denote by $\oX$ the closure of $X$, and by
$\spt \mu \subset \oX$ 
the  smallest closed set carrying the full mass of $\mu$.

The case which is best understood is the case $\X=\Y =\R$, so that $n=m=1$.
If $s\in C^2(\R^2)$ satisfies 
\begin{equation}\label{Lorentz}
\frac{\p^2 s}{\p x \p y} (x,y) > 0
\end{equation}
for all $x,y \in \R$, then \eqref{MK} has a unique solution;  moreover,
this solution coincides with the unique measure $\gamma \in \Gamma(\mu,\nu)$ 
having non-decreasing support, meaning $(x,y),(x',y') \in \spt \gamma$ implies 
$(x-x')(y-y') \ge 0$.  This result, which dates back to Lorentz \cite{Lorentz53},
was rediscovered by the economists Becker \cite{Becker73}, Mirrlees \cite{Mirrlees71}
and Spence \cite{Spence73}. Condition \eqref{Lorentz} is also called the Spence-Mirrlees
condition,  or {\em supermodularity},  in the economics literature.  Note this condition does not depend
on $\mu$ or $\nu$,  and while $\gamma$ depends on them,  it is independent of $s$ in this case.
Moreover,  if $\mu$ is  free from atoms,  then $\gamma$ concentrates on the graph of a non-decreasing 
function $F:\R \longrightarrow \R$ given by 
\begin{equation}\label{non-decreasing map}
\int_{(-\infty,F(x))} d\nu \le \int_{-\infty}^x d\mu \le \int_{(-\infty,F(x)]} d\nu.
\end{equation}
When $\mu$ and $\nu$ are given by $L^1$ probability densities $f$ and $g$,  the fundamental theorem of calculus yields
a ordinary differential equation
\begin{equation}\label{1D Monge-Ampere}
f(x)= F'(x) g(F(x))
\end{equation}
satisfied Lebesgue almost everywhere;  smoothness properties of $F(x)$ can then be deduced 
from those of $f$ and $g$.

In higher equal dimensions $m=n>1$ the situation is much more subtle.  However if $\mu$ 
is given by a probability density $f \in L^1(\R^{m})$,  it is again possible to given
conditions on the surplus function $s \in C^1(\X \times \Y)$ such that the Kantorovich
optimizer \eqref{MK} is unique \cite{McCannRifford15p} \cite{cmn} and
concentrated on the graph of a map $F$ \cite{G} \cite{lev} depending
sensitively on the choice of surplus. If $\nu$ is  also given 
by a probability density $g\in L^1(\R^n)$, and $\log f,\log g \in L^\infty$,
then it is possible to give conditions on $s \in C^4(\X \times \Y)$ which guarantee
$F$ is H\"older continuous \cite{loeper} \cite{FigalliKimMcCannARMA13}, and inherits
higher regularity from that of $f$ and $g$ \cite{LiuTrudingerWang10}.

For unequal dimensions $m \ge n$,  the existence, uniqueness and graphical structure of solutions follows from conditions on $s$ as when $m=n$, but concerning other aspects of the problem much less is known.  Only a result of  Pass 
asserts that if smoothness of $F$ holds for all $\log f,\log g\in C^\infty$,  
then the dimensions are effectively equal,  in the sense that there are functions
$I:\R^m \to \R^n$, $\alpha \in C(\R^m)$ and $\sigma\in C(\R ^{2n})$ such that $s(x,y) = \sigma(I(x),y) + \alpha(x)$.
If $n=1$ we call $I$ an  {\em index} and $s$ {\em pseudo-index} in this case;
otherwise $I$ represents a set of indices and $s$ is {\em pseudo-indicial}.


In general,  one of the keys to understanding the Kantorovich problem \eqref{MK}
is the dual linear program
\begin{equation}  \label{dual}
MK_* := \inf_{(u,v) \in Lip_s} \int_{\X} u(x)d\mu(x) +\int_{\Y}v(y)d\nu(y), 
\end{equation}
where $Lip_s$ consists of all pairs of payoff functions $(u,v) \in L^1(\mu) \oplus L^1(\nu)$
satisfying the stability constraint
\begin{equation}
u(x)+v(y)-s(x,y)\geq 0  \label{stability}
\end{equation}
on $\X \times \Y$.  The remarkable fact is that $MK_*=MK^*$ \cite{Kant}.  
Thus,  if $\gamma$ and $(u,v)$ optimize their
respective problems,  it follows that $\gamma$ vanishes outside the zero set $S$ of the 
non-negative function $u+v-s$.
We therefore obtain the first and second order conditions 
\begin{equation}  \label{foc}
(Du(x),Dv(y)) = (D_x s(x,y), D_y s(x,y))
\end{equation}
and 
\begin{equation}  \label{soc}
\left( 
\begin{matrix}
D^2u(x) & 0 \\ 
0 & D^2 v(y)%
\end{matrix}
\right) \ge \left( 
\begin{matrix}
D^2_{xx} s(x,y) & D^2_{xy} s(x,y) \\ 
D^2_{yx} s(x,y) & D^2_{yy} s(x,y)%
\end{matrix}
\right)
\end{equation}at each $(x,y) \in S \cap (\X \times \Y)^0$ for which the
derivatives in question exist; here $\X^0$ denotes the interior of $\X$.
When the surplus $s$ is Lipschitz, 
then the infimum \eqref{dual} is attained by a pair of Lipschitz functions $(u,v)$;
when $s$ has Lipschitz derivatives,  we may even take $u,v$ to be semiconvex,
hence twice differentiable (in the sense of having a second-order Taylor expansion)
Lebesgue almost everywhere \cite{V} \cite{Santambrogio15p}.
We let $\Dom Dv$ and $\Dom D^2 v$ 
denote the domains where $v$ admits a first- and second-order Taylor expansion,
with $\Domo Dv = \left(\overline \Y\right)^0 \cap \Dom Dv$ and 
$\Domo D^2v := \left(\overline \Y\right)^0 \cap \Dom D^2 v$
and $\overline \Y$ denoting the closure of $\Y$.

For each integer $r \ge 0$,  and H\"older exponent $0 < \alpha \le 1$ 
we denote by $C^{r,\alpha}(X)$
the space of functions which are $r$ times continuously differentiable,
and whose $r$-th derivatives are all Lipschitz continuous functions with respect to the 
distance function $|x-x'|^\alpha$ on $X$
(in which case both properties extend to the closure $\overline X$ of $X$.)  We norm this space by
$$
\|f\|_{C^{r,\alpha}(X)} :=  \sum_{i=0}^r \sum_{|\beta| = i} \| D^\beta f \|_{\infty}
+ \sup_{x \ne x' \in X} \sum_{|\beta|=r} \frac{|D^\beta f(x') - D^\beta f(x)|}{|x'-x|^\alpha} 
$$
where $D^\beta f= \frac{\p^{|i|}f}{\p x_1 \cdots \p x_i}$ and the sums are over multi-indices $\beta$
of degree $|\beta|$. We eliminate the supremum if $\alpha=0$, abbreviating $C^r(X) := C^{r,0}(X)$, $C(X) := C^0(X)$, and defining local
versions $C^{r,\alpha}_{loc}(X)$ of these spaces analogously.
We adopt the convention that $C^{-1,1}=L^\infty$,  and denote by $\|\p X\|_{C^{r,1}(X')}$ a minimal bound
for the sums of norms of the $C^{r,1}$ functions parameterizing $\overline{X'} \cap \p X$.
In case $\alpha=1$,  this space forms an algebra; moreover,
we extend these definitions to tensor fields $f$ on Riemannian manifolds $X$ 
by requiring the components of $f$ to belong to $C^{r,1}_{loc}$ and setting 
$$
\|f\|_{C^{r,1}(X)} :=  \sum_{i=0}^{r+1} \| D^i f \|_{\infty},
$$
the operator $D^i$ now denoting iterated covariant derivatives with respect to the Levi-Civita connection.

We shall also use the Sobolev spaces $W^{r,1}(X)$,  consisting of integrable functions
on $X$ whose distributional partial derivatives up to order $r$ are also integrable. This space is normed by
$$
\|f\|_{W^{r,1}(X)} := \int_X ( \sum_{|\beta|\le r} |D^\beta f| ) d{\mathcal H^m},
$$
where the sum is over multi-indices $\beta$ and 
${\mathcal H^m}$ denotes the Hausdorff $m$-dimensional measure on $X$.
Equipped with the norm 
$$\|f\|_{(C \cap W^{1,1})(X)} := \max\{\|f\|_{L^\infty(X)},\| f \|_{W^{1,1}(X)}\},
$$
the space $(C \cap W^{1,1})(X)$ forms an algebra (closed under the continuous operation
of multiplication),  as does $C(Y; (C \cap W^{1,1})(X))$.

\section{Transportation between unequal dimensions}
\label{S:unequal dimensions}

We now turn our attention to the case in which the source and target domains $\X$ and $\Y$
of our transportation problem have unequal dimensions $m\geq n$.  In this case,
one expects many-to-one rather than one-to-one matching. Indeed, it is
natural to expect that at equilibrium the subset $\f^{-1}(y)\subset \X \subset \R^m$ of
partners which a man of type $y\in \Dom_0 Dv$ is indifferent to
will generically have dimension $m-n$, or equivalently, codimension $n$. 
Our first proposition recalls from \cite{ChiapporiMcCannPass15p} 
conditions under which this indifference set
will in fact be a Lipschitz (or smoother) submanifold of the expected dimension.
It is stated here for costs $s \not\in C^2$ which need not be quite as smooth as those 
of  \cite{ChiapporiMcCannPass15p}.

\subsection{Potential indifference sets}

For any optimal $\gamma$ and payoffs $(u,v)$, we have already
seen that $(x,y) \in S \cap (\X\times \Domo Dv)$ implies 
\begin{equation}\label{yfoc}
D_y s(x,y) = Dv(y).
\end{equation}
That is, all partner types $x \in \X$ for husband $y \in \Domo Dv$ lie in the same level set of the map $x \mapsto D_y s(x,y)$. If
we know $Dv(y)$, we can determine this level set precisely; it depends on $%
\mu$ and $\nu$ as well as $s$. However, in the absence of this knowledge it
is useful to define the \emph{potential indifference sets}, which for given 
$y \in \Y$ are merely the level sets of the map $x\in \X \mapsto D_y s(x,y)$.
We can parameterize these level sets by (cotangent) vectors 
$k \in T_y^*\Y =\R^n$: 
\begin{equation}  \label{cotangent-parameterization}
X(y,k) := \{ x \in \X \mid D_y s(x,y) = k\},
\end{equation}
or we can think of $y \in \Y$ as inducing an equivalence relation between
points of $\X$, under which $x$ and $\bar x \in \X$ are equivalent if and only
if 
\begin{equation*}
D_y s(x,y) = D_y s(\bar x,y).
\end{equation*}%
%
%
%
%
%
%
%
%
Under this equivalence relation, the equivalent classes take the form %
\eqref{cotangent-parameterization}. We call these equivalence classes \emph{%
potential indifference sets}, since they represent a set of partner types
which $y \in \Domo Dv$ has the potential to be indifferent
between. 

A key observation concerning potential indifference sets is the following proposition.
{Recall for a Lipschitz function $F:\R^m \longrightarrow \R^n$,  the {\em generalized 
Jacobian} or {\em Clarke subdifferential} $\p F(x)$ at $x \in \R^m$ consists of the convex hull of limits of the
derivatives of $F$ at nearby points of differentiability \cite{Clarke83}.  For example, $\p F(x) = \{DF(x)\}$ if $F$ is $C^1$ at $x$.}


\begin{definition}[Surplus degeneracy]
Given $\X \subset \R^m$ and $\Y \subset \R^n$ with $m\ge n$, we say $s \in C^2(\X \times \Y)$ 
{\em degenerates} at $(\bar x,\bar y) \in X \times Y$ if $rank(D^2_{xy}s(\bar x,\bar y))<n$.
{If $s \in C^{0,1}_{loc}(X\times Y)$ and $D_y s$  is locally Lipschitz,  we say $s$
degenerates at $(\bar x,\bar y) \in X \times Y$ if for every Lipschitz
extension of $D_ys$ to a neighbourhood of $(\bar x,\bar y)$ and choice of orthonormal basis for $\R^m$,
setting 
\begin{equation}\label{IFT coordinates}
F(x) = D_y s(x,\bar y)
\end{equation}
yields some $m \times n$ matrix $M \in \partial F(\bar x)$ with $\det [M_{ij}]_{1\le i,j\le n} = 0$.} 
Otherwise we say $s$ is {\em non-degenerate} at $(\bar x,\bar y)$.
\end{definition}


\begin{proposition}[Structure of potential indifference sets]
\label{P:indifference structure} Let $s,D_ys \in C^{r,1}_{loc}(\X \times \Y)$ for some $%
r\ge 0$, where $\X \subset {\mathbf{R}}^m$ and $\Y \subset {\mathbf{R}}^n$
with $m \ge n$.
If  $s$ does not degenerate at $(\bar x,\bar y) \in \X
\times \Y$, then $\bar x$ admits a neighbourhood $U \subset {\mathbf{R}}^m$
such that $X(\bar y,D_y s(\bar x,\bar y)) \cap U$ coincides with the intersection of $\X$
with a $C^{r,1}$-smooth, codimension $n$ submanifold of $U$.
\end{proposition}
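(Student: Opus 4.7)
The natural strategy is to apply an implicit function theorem to the map
$$F:\X \longrightarrow \R^n, \qquad F(x) := D_y s(x,\bar y),$$
whose level set through $\bar x$ is, by \eqref{cotangent-parameterization}, exactly the potential indifference set $X(\bar y, D_y s(\bar x, \bar y))$. By hypothesis $F \in C^{r,1}_{loc}$, and non-degeneracy at $(\bar x,\bar y)$ provides, after choosing a suitable orthonormal basis of $\R^m$ (and, when $r=0$, a suitable Lipschitz extension of $D_y s$ to a neighbourhood of $\bar x$), that the upper $n \times n$ block $[M_{ij}]_{1\le i,j\le n}$ of every $M \in \p F(\bar x)$ is invertible.

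When $r \ge 1$, $F$ is continuously differentiable with $\p F(\bar x) = \{DF(\bar x)\}$, so the classical $C^{r,1}$ implicit function theorem applies: splitting coordinates as $x = (x',x'')$ with $x' \in \R^n$ and $x'' \in \R^{m-n}$, one solves $F(x',x'') = F(\bar x)$ for $x' = \varphi(x'')$ with $\varphi \in C^{r,1}$ near $\bar x''$. The graph of $\varphi$ is a codimension-$n$ $C^{r,1}$ submanifold of a neighbourhood $U$ of $\bar x$, and its intersection with $\X$ coincides with $X(\bar y, D_y s(\bar x, \bar y)) \cap U$. In the borderline case $r = 0$, $F$ is only Lipschitz, and I would instead invoke Clarke's Lipschitz implicit function theorem \cite{Clarke83}: the non-singularity of the top $n \times n$ block of every $M \in \p F(\bar x)$ is precisely the hypothesis this result requires, and it produces a Lipschitz (hence $C^{0,1}$) implicit function $\varphi$ with the same local property.

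The only real obstacle is checking that the non-degeneracy definition --- phrased in terms of an arbitrary Lipschitz extension and an arbitrary orthonormal basis --- matches the hypothesis of Clarke's theorem in the $r=0$ case. This reduces to observing (i) that any non-singular Clarke Jacobian element admits an orthonormal basis in which its top $n \times n$ block is invertible (standard linear algebra: pick coordinates on $\R^m$ whose first $n$ span a complement of the kernel of the transpose), and (ii) that different Lipschitz extensions of $D_y s$ produce the same Clarke Jacobian at $\bar x$ when $\bar x$ is interior to $X$, so the resulting local description of the level set inside $\X$ is independent of extension. Once these points are verified, the proposition follows essentially by invocation of the appropriate implicit function theorem.
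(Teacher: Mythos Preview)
Your proposal is correct and follows essentially the same approach as the paper: both apply a Clarke-type result to $F(x)=D_y s(x,\bar y)$ after selecting the orthonormal basis and Lipschitz extension furnished by the non-degeneracy hypothesis. The only cosmetic difference is that the paper invokes Clarke's \emph{inverse} function theorem on the augmented map $x\mapsto(D_y s(x,\bar y),x_{n+1},\dots,x_m)$ and reads off the level set as the biLipschitz image of an affine slice, whereas you invoke Clarke's \emph{implicit} function theorem directly; these are equivalent formulations. Your closing paragraph is overcautious: the non-degeneracy definition already hands you a specific extension and basis with the required invertibility, so points (i) and (ii) need not be argued---you simply use that extension and basis, and since it agrees with $D_y s$ on $\X$, the level set inside $\X$ is described correctly.
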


\begin{proof}
For smooth $s$,  the set $L:=\{ x \in U \mid D_y s(x,\bar y) = D_y s(\bar x,\bar y) \}$ forms
a codimension $n$ submanifold of $U$, by the preimage theorem \cite[\S 1.4]%
{GuilleminPollack74}.   Otherwise choose an $C^{r,1}$ extension of $D_ys$ to a neighbourhood $U \times V$ of 
$(\bar x,\bar y)$ and an orthonormal basis for $\R^m$ such that each $M \in \p F$ 
has full rank with $F$ from \eqref{IFT coordinates}.
The Clarke inverse function theorem \cite{Clarke83} gives a $C^{r,1}$ local inverse
to the map $x\in U \mapsto (D_y s(x,\bar y),x_{n+1},\ldots,x_m)$.  Taking $U$ smaller if necessary, the set 
$X(\bar y,D_ys(\bar x,\bar y))$ is the 
image of the 
affine subspace 
$\{D_y s(\bar x,\bar y)\} \times \R^{m-n}$ 
under this  (biLipschitz) local inverse.
\end{proof}

Although we have stated the proposition in local form, it implies that if $%
\bar k = D_y s(\bar x,\bar y)$ is a \emph{regular value} of $x \in \X \mapsto
D_y s( x,\bar y)$ --- meaning $s$ is non-degenerate throughout 
$X(\bar y,\bar k)$ --- then $X(\bar y, \bar k)$ is
the intersection of $\X$ with an $m-n$ dimensional submanifold of ${\mathbf{R}%
}^m$.

\begin{remark}[Genericity]
For $s \in C^2$ regular values are generic in the sense that for any given $\bar{y}\in \overline\Y$,
Sard's theorem asserts that the regular values of $D_{y}s(\cdot ,\bar{y})$
form a set having full Lebesgue measure in ${\mathbf{R}}^{n}$. However, if $%
D_{y}s(\cdot ,\bar{y})$ also has \emph{critical} (i.e.\ non-regular) values
for some $\bar{y}\in \overline Y$, it is entirely possible that $Dv(\bar{y})$ is a {%
critical} value of $D_{y}s(\cdot ,\bar{y})$ for each such $\bar{y}$. This is
necessarily the case when $rank(D_{xy}^{2}s(x,y))<\min \{m,n\}$ throughout $%
X\times Y$, meaning $s$ is globally degenerate.
\end{remark}


As argued above, the potential indifference sets %
\eqref{cotangent-parameterization} 
are determined by the surplus function $s(x,y)$ without reference to the
populations $\mu $ and $\nu $ to be matched. On the other hand, the
indifference set actually realized by each $y\in Y$ depends on the
relationship between $\mu $, $\nu $ and $s$. This dependency is generally
complicated. However, there is one case in which it may simplify
substantially: the case of multi-to-one dimensional matching, namely $n=1$.
In this case, suppose $D_{xy}^{2}s(\cdot ,y)$ is non-vanishing (i.e.\ $\frac{%
\partial s}{\partial y}(\cdot ,y)$ takes only regular values). 
Then the potential indifference sets $X(y,k)$ form hypersurfaces in $\R^m$.
Moreover, as $k$ moves through ${\mathbf{R}}$, these potential indifference
sets sweep out more and more of the mass of $\mu$. For each $y\in Y$ there
will be some choice of $k\in {\mathbf{R}}$ for which the $\mu $ measure of $%
\{x\mid D_{y}s(x,y) \le k\}$ exactly coincides with the $\nu $ measure of $%
(-\infty ,y]$ (assuming both measures are absolutely continuous with respect
to Lebesgue, or at least that $\mu $ concentrates no mass on hypersurfaces
and $\nu $ has no atoms). In this case the potential indifference set $%
X(y,k) $ is said to split the population proportionately at $y$, 
making it a natural
candidate for being the {\em iso-husband} set $\f^{-1}(y)$ to be matched with 
$y$.\footnote{Since $k=s_y(x,y)$ can be recovered from any $x\in X(y,k)$ and $y$,  we may
equivalently say $x$ splits the population proportionately at $y$,
and vice versa.} 
In the next sections, we go on to describe and contrast situations in
which this expectation is born out and leads to a complete solution from
those in which it does not.

\section{Multi-to-one dimensional matching}
\label{S:nesting}

We now detail a new approach to a specific class of transportation problems, largely
unexplored, but which can
often be solved explicitly as below. These are 
\emph{multi-to-one dimensional} problems, 
in which the space of wives may have several dimensions but the space of
husbands only one.  Thus, we are matching a 
distribution on $x=\left( x_{1},...,x_{m}\right) \in {\mathbf{R}}%
^{m}$ with another on $y\in {\mathbf{R}}$. The surplus $s$ is then a
function $s\left( x_{1},...,x_{m},y\right) $ of $m+1$ real variables. 

The goal is to construct
from data $(s, \mu,\nu)$ a mapping $\f:\X \longrightarrow \Y \subset \R$,
whose level sets $\f^{-1}(y)$ constitute the {\em iso-husband} sets, or submanifold of wives among which
husband $x$ turns out to be indifferent facing the given market conditions.
At the end of the preceding section we identified a natural candidate for this
iso-husband set: namely the potential indifference set which divides the mass 
of $\mu$ in the same ratio as $y$ divides $\nu$;  whether or not these natural
candidates actually fit together to form the level sets of a function $F$ or not
depends on a subtle interaction between $\mu$, $\nu$ and $s$.  When
they do,  we say the model is {\em nested},  and in that case we show
that the resulting function $\f:X \longrightarrow Y$ produces the unique 
optimizer $\gamma=(id \times \f)_\#\mu$ for \eqref{MK}.
Note that except in the Lorentz/Becker/Mirrlees/Spence case $m=1=n$,  
this nestedness depends not only on $s$, but also on $\mu$ and $\nu$.

\subsection{Constructing explicit solutions for nested data}

For each fixed $y\in Y\subseteq {\mathbf{R}}$, our goal is to identify the 
iso-husband set $\{x\in X\mid \f(x)=y\}$ of husband type $y$
in the given problem. When differentiability of $v$ holds at $%
y\in \Y^0$, the argument in the preceding section implies that this is contained in
one of the potential indifference sets $X(y,k)$ from \eqref{cotangent-parameterization}.
Proposition \ref{P:indifference structure} indicates when this set will have
codimension $1$; it generally divides $\X$ into two pieces: the sublevel set
\begin{eqnarray}\label{Xsubseteq}
X_\le(y,k) &:=& \{x \in  \X \mid \frac{\partial s}{\partial y} 
(x,y) \le k \},
\end{eqnarray} 
and its complement $X_>(y,k):= \X \setminus X_\le (y,k)$.
We denote its strict variant by $X_<(y,k) := X_\le(y,k) \setminus X(y,k)$.

To select the appropriate level set,  we
choose the unique level set \textit{splitting the population proportionately}
with $y$; that is, the $k=k(y)$ for which the $\mu$ measure of female types 
$X_\le(y,k)$
coincides with the $\nu$
measure of male types $(-\infty,y]$. We then set $y:=\f(x)$ for each $x$ in
$X(y,k)$. 
Our next theorem specifies conditions under which the resulting match
$\gamma = (id \times \f)_\#\mu$ optimizes the Kantorovich problem \eqref{MK};
we view it as the natural generalization of the positive assortative matching
results of \citeasnoun{Lorentz53} \citeasnoun{Mirrlees71} \citeasnoun{Becker73} and \citeasnoun{Spence73}  
from the one-dimensional to the multi-to-one dimensional setting.  
Unlike their criterion \eqref{Lorentz}, which depends only on $s$,
ours relates $s$ to $\mu$ and $\nu$,  by
requiring the sublevel sets $y \in Y \mapsto X_\le(y,k(y))$ 
identified by the procedure above to depend monotonically on $y\in \R$,
with the strict inclusion $X_\le(y,k(y))\subset X_<(y',k(y'))$ holding whenever $\nu[(y,y')]>0$.
We say the model $(s,\mu,\nu)$ is {\em nested} in this case.




\begin{theorem}[Optimality of nested matchings]
\label{T:nested}
Let $X \subset \R^m$ and $Y  \subset \R$ be connected open sets equipped with Borel probability measures
$\mu$ and $\nu$.  Assume $\nu$ has no atoms and $\mu$ vanishes on each Lipschitz hypersurface.
Use $s \in C^{1,1}(X\times Y)$ and $s_y =\frac{\partial s}{\partial y}$ 
to define $X_\le$, $X_<$, 
etc. as in \eqref{Xsubseteq}.


(a) Assume  $s$ is non-degenerate 
throughout ${X \times Y}$.
Then for each $y \in \Y$ there is a maximal interval $K(y)=[k^-(y),k^+(y)] \ne \emptyset$ such that
$\mu[X_\le(y,k)] =\nu[(-\infty,y)]$ for all $k\in K(y)$.  Both $k^+$ and $(-k^-)$ are upper semicontinuous.  

(b) In addition, assume both maps $y \in Y \mapsto X_\le (y,k^\pm(y))$ are non-decreasing,  and moreover that
$\int_y^{y'} d\nu >0$ implies $X_\le (y,k^+(y)) \subseteq X_< (y',k^-(y'))$.
Then 
$k^+$ is right continuous, $k^-$ is left continuous, and they agree throughout
$\spt \nu$ except perhaps at countably many points.
Setting $\f(x)=y$ for each $x \in X(y,k^+(y))$ defines $\f:X \longrightarrow Y$ [$\mu$-a.e.], 
 and $\gamma=(id \times \f)_\#\mu$ is the unique maximizer
of \eqref{MK} on $\Gamma(\mu,\nu)$.  

(c) In addition, assume $\spt \nu$ is connected, say $\spt \nu = [\underline y, \bar y]$. 
Then $\f$ agrees $\mu$-a.e.\ with the continuous function
\begin{equation}\label{bar F continuous}
\bar \f(x) = \left\{
\begin{array}{ccl}
\bar y & & {\rm if}\ x \not \in 
\bigcup_{y \in (\underline{y},\bar y)} X_<(y,k^-(y))
\\ y  & &  {\rm if}\   x \in X_\le(y,k^+(y)) \setminus X_<(y,k^-(y))\ \mbox{\rm with}\ y \in (\underline y,\bar y),
\\ \underline y & & {\rm if}\ x \in 
\bigcap_{y \in (\underline{y},\bar y)} X_\le(y,k^+(y)).
\end{array}
\right.
\end{equation}
(d) If, in addition, $Y \subset \spt \nu$ then $\bar \f:X \longrightarrow Y$.
\end{theorem}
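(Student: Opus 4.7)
My plan has three parts: pin down $Y$, reduce to two emptiness claims, and verify emptiness using non-degeneracy. Since $\spt \nu \subset \overline Y$ is automatic and $Y \subset \spt \nu$ is the added hypothesis, combining with (c)'s premise $\spt \nu = [\underline y, \bar y]$ gives $\overline Y = [\underline y, \bar y]$; openness and connectedness of $Y \subset \R$ then force $Y = (\underline y, \bar y)$. Since part (c) already yields $\bar \f(X) \subset [\underline y, \bar y] = \overline Y$, the content of (d) is to rule out $\bar \f$ attaining $\underline y$ or $\bar y$. By the piecewise formula \eqref{bar F continuous}, this reduces to showing that the ``bottom set'' $\bigcap_{y \in Y} X_\le(y, k^+(y))$ and the ``top set'' $X \setminus \bigcup_{y \in Y} X_<(y, k^-(y))$ are both empty.

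Both sets are at least $\mu$-null: part (a) gives $\mu[X_\le(y, k^+(y))] = \nu((-\infty, y))$, which tends to $0$ as $y \downarrow \underline y$ and to $\nu(Y) = 1$ as $y \uparrow \bar y$, using atomlessness of $\nu$ and $\spt \nu = [\underline y, \bar y]$. To upgrade $\mu$-null to actually empty, I would lean on Proposition \ref{P:indifference structure}: non-degeneracy makes each $X_\le(y, k)$ a closed subset of $X$ bounded by a $C^{1,1}$ hypersurface, hence either empty in $X$ or of nonempty interior there. Upper semi-continuity of $k^+$ and local boundedness of $s_y$ on compacts produce a finite limit $k^+_* = \lim_{y \downarrow \underline y} k^+(y)$, so the limiting sublevel set $\{x \in X : s_y(x, \underline y) \le k^+_*\}$ inherits $\mu$-measure zero; combined with non-degeneracy and the fact that $\mu$ charges every nonempty open subset of $X$ which it meets, this forces the limit to have empty interior and at most to coincide with the level hypersurface $\{s_y(\cdot, \underline y) = k^+_*\} \cap X$. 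Continuity of $s_y$ in $y$ and the strict nested inclusion from (b) then yield the contradiction that any purported $x_0$ in the bottom set must sit on $\partial X$, disjoint from the open $X$. The top case is handled symmetrically using the dual limit $y \uparrow \bar y$.

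The main obstacle is precisely this null-to-empty upgrade, which requires marrying the $C^{1,1}$ rigidity of the level hypersurfaces (coming from non-degeneracy) with a suitable positivity of $\mu$ on open subsets of $X$. The hypothesis $Y \subset \spt \nu$ is indispensable: without it, the slack between $\overline Y$ and $\spt \nu$ would leave room for $\bar \f$ to legitimately take values in $\overline Y \setminus Y$ without contradicting the nested construction, so no such emptiness could hold.
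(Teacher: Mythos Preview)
Your reduction is correct: under the added hypothesis $Y\subset\spt\nu$ one has $Y=(\underline y,\bar y)$, and $\bar\f:X\to Y$ is equivalent to the emptiness of the bottom set $\bigcap_{y\in Y}X_\le(y,k^+(y))$ and of the top set $X\setminus\bigcup_{y\in Y}X_<(y,k^-(y))$, both of which you correctly show to be $\mu$-null. The gap is in the upgrade from $\mu$-null to empty. Your key step invokes ``the fact that $\mu$ charges every nonempty open subset of $X$ which it meets''; read literally this is a tautology, and what you actually need is that $\mu$ charges every nonempty open subset of $X$, i.e.\ $\spt\mu\supset X$. That is \emph{not} among the hypotheses of Theorem~\ref{T:nested}. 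Indeed, under only the stated hypotheses the conclusion can fail: take $m=1$, $X=Y=(0,1)$, $s(x,y)=xy$, $\nu$ Lebesgue on $(0,1)$, and $\mu$ equal to $2\cdot$Lebesgue restricted to $(1/4,3/4)$. All hypotheses of (a)--(d) are met, $k^\pm(y)=y/2+1/4$, the model is nested, yet the bottom set equals $(0,1/4]$, so $\bar\f$ attains $\underline y=0\notin Y$. Two further steps in your sketch are also unjustified: upper semicontinuity of $k^+$ does not by itself produce a limit $k^+_*=\lim_{y\downarrow\underline y}k^+(y)$, and even when the limiting sublevel set coincides with a level hypersurface, that hypersurface sits inside $X$, so your conclusion ``$x_0\in\partial X$'' does not follow from strict nesting.

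For comparison, the paper's proof of (d) is a single line (``obvious from the monotonicity of $X_\le(y,k^+(y))$''), so your route is far more elaborate than what the authors record. In light of the counterexample above, the paper's one-liner must be read as relying on an unstated background assumption---most plausibly $\spt\mu=\overline X$, which is imposed in all later applications via $\log f\in L^\infty(X)$ in Theorem~\ref{T:non-nested}. So you have correctly sensed that something beyond bare monotonicity is needed; but to close the argument you must either add that full-support hypothesis explicitly, or reinterpret (d) as the weaker (and trivially true) statement $\bar\f:X\to\overline Y$.
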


Our strategy for proving (b) is to show the $\spt \gamma$ is contained in the 
the {\em $s^*$-subdifferential}
$$
\partial^{s^*}v^* := \{(x,y) \in \oX \times \oY \mid v(y') \ge v(y) - s(x,y) + s(x,y') \quad \forall y' \in \oY \}.
$$
of the Lipschitz function $v:\oY \longrightarrow \R \cup \{+\infty\}$ solving the differential equation $v'(y)= k^+(y)$ a.e.
From there, we can conclude optimality of $\gamma$ using well-known results 
which show $s^*$-subdifferentials have a property (known as {\em $s$-cyclical monotonicity})
characterizing the support of optimizers \cite{GM} \cite{V2}.  We also
give a self-contained duality-based proof of optimality of $\gamma$ 
as a byproduct of our uniqueness argument.

\begin{proof}
(a) Proposition \ref{P:indifference structure} implies 
$X(y,k)$ 
is the intersection with $\X$ of an $m-1$ dimensional Lipschitz submanifold (orthogonal to 
$D_x s_y(x,y) \ne 0$ wherever the latter is defined and continuous).
Since both $\mu$ and $\nu$ vanish on hypersurfaces, the 
function
$$
h(y,k):=\mu[X_\le(y,k)]-\nu[(-\infty,y)] 
$$
is continuous, and for each $y \in Y$ climbs monotonically from $-\nu[(-\infty,y)]$ to $1- \nu[(-\infty,y)]$
with $k \in \R$. The intermediate value theorem then implies the existence of $k^\pm(y)$.  Continuity of $h(y,k)$ also confirms that the zero set $[k^-(y),k^+(y)]$ of $k\mapsto h(y,k)$ is closed, that $k^-$ is lower semicontinuous and $k^+$ is upper semicontinuous.

(b) Observe $k^-(y)<k^+(y)$ implies the open set $X_<(y,k^+(y))\setminus X_\le(y,k^-(y))$ is non-empty,
because the image of $x \in X \mapsto s_y(x,y)$ is connected.  This open subset of $X$ is disjoint from
$X_<(y',k^+(y'))\setminus X_\le(y',k^-(y'))$ whenever $\nu[(y,y')] \ne 0$,  by the monotonicity assumed of
$X_\le(\cdot,k^\pm(\cdot))$.
Since $X$ can only contain countably many disjoint open sets,
we conclude $k^+(y)=k^-(y)$ for $y \in \spt \nu$,  
apart perhaps from countably many points.


Having established $\nu[\{y \mid k^+>k^-\}]=0$,
we shall continue the proof by showing the solution
$$
v(y) := \int^y k^+(y) dy.
$$
of $v'(y) = k^+(y)$ has a $s^*$-subdifferential which is closed and contains $S^+$ defined by
$$S^\pm:=\{(x,y) \in 
X \times Y 
\mid s_y(x,y) =k^\pm(y)\}.$$ 
(A similar argument shows the solution to $v'= k^-$ has a $s^*$-subdifferential containing $S^-$.) 
Note that the global bound $s_y \in L^\infty$ implies $v$ is Lipschitz, hence differentiable
Lebesgue almost everywhere.

To begin,  suppose $(x',y') \in S^+$ and $k^+(y')=k^-(y')$,  so that $v$ is differentiable at $y'$
and $v'(y') = s_y(x',y')$.  Since
$X_\le(y',v'(y')) \subseteq X_\le(y'+\delta,v'(y'+\delta))$ for almost all $\delta>0$,
we see $s_y(x',y) \le v'(y)$ for almost all $y>y'$.  Integrating over $y\in[y',y^+]$ yields 
\begin{equation}\label{right b-subdifferentiability}
v(y^+) -s(x',y^+) \ge v(y') - s(x',y')
\end{equation}
for all $y^+ \ge y'$. On the other hand, since $v'(y)$ is continuous at $y'$ and 
$X(y,v'(y))$ is a hypersurface in the open set $X$,  
we deduce $x' = \lim_{i \to \infty} x_i$ for some sequence satisfying  
$s_y(x_i,y')>v'(y')$. Then from $X_\le(y-\delta,v'(y-\delta)) \subset X_\le(y,v'(y))$
we deduce $s_y(x_i, y'-\delta) > v'(y'-\delta)$ for almost all $\delta>0$
hence $s_y(x',y'-\delta) \ge v'(y'-\delta)$.  Integrating
over $y-\delta \in [y^-,y']$ yields
\begin{equation}\label{left b-subdifferentiability}
s(x',y')-v(y') \ge s(x',y^-) - v(y^-)
\end{equation}
for all $y^- \le y'$.  
Together \eqref{right b-subdifferentiability}--\eqref{left b-subdifferentiability}
show $(x',y') \in \partial^{s^*} v^*$.

It remains to consider the countable collection of points $y\in \spt \nu$ where $k^+(y)>k^-(y)$.
We claim $k^+$ is right continuous. 
Since the $s$-subdifferential $\partial^{s^*}v^*$ is closed,  this will complete the proof that
$S^+ \subset \partial^{s^*}v^*$.
Due to the semicontinuity
already established,  it is enough to show
$$
k^+(y) \le L^+:=\lim \inf_{\delta \downarrow 0} k^+(y+\delta).
$$

For any sequence $y_i \ge y_{i+1}$ decreasing to $y$ with $k^+(y_i) \to L^+$,
the sets $X_\le(y_i,k^+(y_i))$ decrease by assumption.   The limit set 
$X_\infty := \bigcap_{i=1}^\infty X_\le (y_i,k^+(y_i))$ therefore contains $X(y,k^+(y)) \subseteq X_\infty$.  
On the other hand, from the definition and continuity of $s_y$ we check 
$X_\infty \subseteq X_\le(y,L^+)$. Monotonicity of $k \mapsto X_\le(y,k)$ yields $k^+(y) \le L^+$ as 
desired.  The proof that $k^-$ is left continuous is similar, and implies $s$-cyclical monotonicity of $S^-$.



Since $k^\pm$ are both constant on any interval $I\subset \R$ with
 $\nu[I]=0$,  we conclude they are continuous functions except perhaps
at the countably many points in $\spt \nu$ where they disagree  ($k^-$ being left
continuous and $k^+$ right continuous at such points).  
We shall show $\gamma:=(id \times \f)_\# \mu$ is well-defined and maximizes \eqref{MK}.

The definitions of $\f$ and $S^+$ were chosen to ensure $Graph(\f) \subset S^+$.
There are two kinds of irregularities to consider.
Any discontinuities in $k^+$ correspond to countably many gaps 
\begin{equation}\label{open gaps}
X_<(y,k^+(y)) \setminus X_< (y,k^-(y))
\end{equation}
in the distribution $\mu$ of wives.  Although $F$ has not been defined on these open gaps,
they have $\mu$ measure zero.  In addition, $Y \setminus \spt \nu$ may consist of at most 
countably many intervals $(\underline y_i, \bar y_i)$.  To each such interval corresponds
a relatively closed gap
\begin{equation}\label{closed gaps}
X_\le(\bar y_i,k^+(\bar y_i) \setminus X_<(\underline y_i,k^-(\underline y_i))
\end{equation}
of $\mu$ measure zero on which $F$ is not generally well-defined;  typically this gap
consists of a single Lipschitz submanifold,  throughout which we have attempted to simultaneously
assign  $F$ every value in $[\underline y_i, \bar y_i]$.


Apart from these two kinds of gaps (open and closed) --- which have $\mu$ measure zero and on which $F$
may not be (well-)defined ---
the $m-1$ dimensional Lipschitz submanifolds $X(y,k^+(y))$ foliate the support of $\mu$ in $X$.  The leaves
of this foliation correspond to distinct values $y\ne y'$ and are disjoint since $\nu([y,y'])>0$
implies $X(y,k^+(y)) \subset X_<(y',k^+(y'))$ disjoint from $X(y',k^+(y'))$.
This shows $\f$ is defined $\mu$-a.e.;  it is also $\mu$-measurable since $\f^{-1}((-\infty,y])$
differs from a relatively closed subset of $X$ only by the countably many above-mentioned gaps, 
which are themselves Borel and have vanishing $\mu$ measure.
Since $\f$ was selected so that 
$\f_\# \mu := \mu \circ \f^{-1}$ agrees with $\nu$ on the subalgebra of intervals $(\infty,y]$
parameterized  by $y\in Y$,  we see 
$\gamma := (id \times \f)_\#\mu$ lies in $\Gamma(\mu,\nu)$ and is well-defined.  Since 
$Graph(\f) \subset S^+ \subset \partial^{s^*} v^*$, we conclude
 $\gamma$ vanishes outside $\partial^{s^*} v^*$.  On the other hand,
$s^*$-subdifferentials are known to be $s$-cyclically monotone \cite{Rochet87},
whence optimality of $\gamma$ follows from standard results \cite{V2}.  We later give
a self-contained proof, as a byproduct of our uniqueness argument below.

(c) Before addressing uniqueness, let us consider point (c).  
Each point $x\in X$ outside the $\mu$-negligible gaps \eqref{open gaps}--\eqref{closed gaps}
belongs to a leaf $X(y,k^+(y))$ of the foliation corresponding to a unique $y\in Y$;
moreover $k^+(y)=k^-(y)$.  Since $X(y,k^+(y)) = X_\le(y,k^+(y))\setminus X_<(y,k^-(y))$
for such $y$ we see $\f=\bar \f$ holds $\mu$-a.e.  Let us now argue $\bar \f$ is continuous.

The fact that $\spt \nu = [\underline y,\bar y]$
is connected rules out all gaps \eqref{closed gaps} of the second kind in $X$ excepting two, 
corresponding to the intervals in $Y$ to the left of $\underline y$ and to the right of $\bar y$.
Thus $\bar F$ is well-defined throughout $X$, taking constant values $\underline y$ and $\bar y$
on each of these two gaps,  and the constant value $\bar F(y)=y$ on each remaining open gap
\eqref{open gaps}.  From \eqref{bar F continuous} and the monotonicity assumed of 
$y \in Y \mapsto X_\le(y,k^\pm(y))$ we see
$$
\bar F^{-1}((-\infty,y])=\left\{
\begin{array}{ccc}
\emptyset & {\rm if} &y< \underline y,
\\ \displaystyle \bigcap_{y' \in (\underline{y},\bar y)} X_\le(y',k^+(y'))
& {\rm if} & y=\underline y,
\\ X_\le(y,k^+(y)) & {\rm if} & y \in (\underline y,\bar y),
\\ X & {\rm if} & y \ge \bar y,
\end{array}
\right.
$$ 
and
$$
\bar F^{-1}([y,\infty))=\left\{
\begin{array}{ccc}
X & {\rm if} &y\le \underline y,
\\ X_\ge(y,k^-(y)) & {\rm if} & y \in (\underline y,\bar y),
\\ 
X \setminus \bigcup_{y' \in (\underline{y},\bar y)} X_<(y',k^-(y')) & {\rm if} & y = \bar y,
\\ \emptyset & {\rm if} & y> \bar y,
\end{array}
\right.
$$ 
are relatively closed, hence $\bar F$ is continuous.

(d) is obvious from the monotonicity of $X_\le(y,k^+(y))$.
\end{proof}

\begin{proof}[Proof of  uniqueness in Theorem \ref{T:nested}(b)]
To prove unicity, we show that the function $v$ constructed above,
along with 
\begin{equation}\label{v conjugate}
u(x) := \sup_{y \in Y} s(x,y) - v(y)
\end{equation}
solve the dual problem \eqref{dual}.  
(Since $v$ is Lipschitz, we may equivalently take the supremum defining $u$ over $Y^0$ or $\overline Y$.)
From there we proceed to argue along well-trodden lines.
From \eqref{v conjugate} we see
\eqref{stability} holds on $X \times Y$.
Moreover, the fact that $Graph(F) \subset \partial^{s^*} v^*$ shows for $x \in \Dom F$
that the supremum \eqref{v conjugate} is attained at $y=F(x)$;  thus 
$u<\infty$ on $\Dom F$.  This implies $v$ has a global lower bound,
which in turn implies a global upper bound for $u$.  On the other hand,
$$
v(y) \ge \sup_{x \in X} s(x,y) - u(x)
$$
shows $u$ is bounded below,  hence belongs to $L^\infty$.  Since $y=F(x)$ yields 
equality in the stability constraint \eqref{stability}, integrating $s$ against $\gamma = (id \times F)_\#\mu$ yields
\begin{eqnarray*}
MK^* \ge  \int s d(id \times F)_\#\mu 
&=& \int [u(x) +  v(F(x))] d \mu(x) 
\\ &=& \int u d\mu + \int v d\nu
\\ &\ge& MK_*,
\end{eqnarray*}
where we have concluded $v \in L^1(\nu)$ and hence $(u,v) \in Lip_s$ from the second equality.
Since the opposite inequality $MK^* \le MK_*$ is immediate from \eqref{stability} and $\gamma \in \Gamma(\mu,\nu)$,  we conclude that our functions $(u,v)$ attain the infimum \eqref{dual}.
(We also obtain direct confirmation that $MK^*=MK_*$ and $(id \times \f)_\# \mu$ attains the supremum \eqref{MK}.)

Now let $\bar \gamma$ be any other optimizer for \eqref{MK}.  To establish uniqueness,
we shall show $\bar \gamma$ vanishes outside the graph of $F$,  after which \cite[Lemma 3.1]{akm} 
equates $\bar \gamma$ with $(id \times F)_\#\mu$ to conclude the proof.
The set $S:=\spt \gamma \cap (X \times \Dom_0 Dv)$ carries the full mass of $\bar \gamma$.
Each $(x',y') \in S$ produces equality in \eqref{stability},  hence $s_y(x',y') = v'(y')$.
Thus $x' \in X(y', k(y'))$ and $F(x') =y'$, showing $S \subset Graph(F)$ as desired.
\end{proof}

\subsection{Universally nesting surpluses reduce dimension}
\label{S:pseudo-index}

Nestedness is generally a property of the three-tuple $\left( s,\mu ,\nu
\right) $; that is, for most  non-degenerate surplus functions, the model may or may not be
nested depending on the measures under consideration. In some cases,
however, the surplus function is such that the model is nested for all
measures $\left(\mu ,\nu \right) $.   We show 
this occurs precisely when the surplus takes \emph{pseudo-index} form; for $s\in C^2$ this means
there exist $C^1$ functions $\alpha$ and $I$ on $X\subset \R^m$  and 
$\sigma$ on $I(X) \times Y \subset \R^2$ 
such that
\begin{equation}\label{pseudo-index}
s\left( x,y\right) =\alpha \left( x \right) +\sigma
\left( I\left( x\right) ,y\right).
\end{equation}
In this case the different components of $x=(x_1,\cdots,x_m)$ are relevant only in so far
as the determine the value of the index function $I(x)$, and the dimension of the transport problem is effectively reduced from $m+1$ to $1+1$,  since it becomes equivalent to matching $I_\#\mu$ with $\nu$ optimally for $\sigma$.  For connected domains,  we shall also see non-degeneracy implies the effective surplus function $\sigma$  is either super- or sub-modular.




\begin{proposition}[Surpluses nesting universally  are pseudo-index]\label{P:pseudo-index}
Assume $X \subseteq \mathbf{R}^n$ and $ Y=(a,b) \subset \mathbf{R}$ are open and connected, and that 
the surplus $s \in C^2(X \times Y)$ is non-degenerate, meaning $D_x s_y$ is nowhere
vanishing on $X \times Y$.  Then $s$ takes pseudo-index form \eqref{pseudo-index} if and only if 
$(s, \mu,\nu)$ is nested for every choice of absolutely continuous probability measures $\mu$ on $X$  and
 $\nu$ on $Y$.
\end{proposition}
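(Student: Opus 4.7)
My plan is to establish the two implications separately, with the reverse direction being the substantial one.

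\textbf{Pseudo-index implies universal nesting.} Assuming $s(x,y) = \alpha(x) + \sigma(I(x), y)$, I would differentiate to get $s_y = \sigma_y(I(x),y)$ and $D_x s_y = \sigma_{yz}(I(x),y) DI(x)$. Non-degeneracy forces both factors nonvanishing, so by connectedness of $X \times Y$ the sign of $\sigma_{yz}$ is constant; WLOG $\sigma_{yz}>0$, so $z\mapsto \sigma_y(z,y)$ is strictly increasing and $X_\le(y,k)=\{x: I(x)\le \zeta(y,k)\}$ for some $\zeta$. Every sublevel set in the family $\{X_\le(y,k)\}$ is thus an $I$-sublevel set, and these form a totally ordered family. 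For absolutely continuous $\mu,\nu$, the threshold $k(y)$ from Theorem \ref{T:nested}(a) satisfies $\mu[\{I\le\zeta(y,k(y))\}]=F_\nu(y)$; since absolute continuity of $\mu$ combined with $DI\ne 0$ makes each $\{I=c\}$ null under $\mu$, the map $c\mapsto \mu[\{I\le c\}]$ is continuous and non-decreasing, and monotonicity of $F_\nu$ passes through to give the required nested inclusion (strict wherever $\nu((y,y'))>0$).

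\textbf{Universal nesting implies pseudo-index.} The crucial sub-claim is that universal nestedness forces $D_x s_y(x_0,y_0)$ and $D_x s_y(x_0,y_1)$ to be parallel at every $x_0\in X$ and every $y_0,y_1\in Y$. Granted this, setting $I(x):=s_y(x,y_*)$ for any fixed $y_*\in Y$ gives a global $C^1$ submersion by non-degeneracy, and the level hypersurfaces of $s_y(\cdot,y)$ all share tangent hyperplanes with those of $I$, so they coincide as leaves of a single codimension-one foliation of the connected set $X$. This yields $s_y(x,y)=\phi(I(x),y)$ for some $\phi$, and integrating in $y$ produces $s(x,y)=\alpha(x)+\sigma(I(x),y)$ with $\sigma(z,y):=\int_a^y \phi(z,t)\,dt$ and $\alpha(x):=s(x,a)$.

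\textbf{The main obstacle: proving the parallel sub-claim by contradiction.} Suppose $D_x s_y(x_0,y_0)$ and $D_x s_y(x_0,y_1)$ are not parallel for some $x_0\in X$ and $y_0<y_1\in Y$. By non-degeneracy and the implicit function theorem, the hypersurfaces $H_i := X(y_i, s_y(x_0,y_i))$ meet transversally at $x_0$, partitioning a small box $B\subset X$ around $x_0$ into four cells $A_{\pm\pm} = H_0^\pm \cap H_1^\pm$ of positive Lebesgue measure. I would then construct an absolutely continuous $\mu$ supported in $B$ realizing any chosen positive cell-masses $m_{\pm\pm}$ summing to one (and having positive density throughout $B$), together with an absolutely continuous $\nu$ on $Y$ whose CDF interpolates the target values $F_\nu(y_0):=m_{--}+m_{-+}$ and $F_\nu(y_1):=m_{--}+m_{+-}$, arranged so $F_\nu(y_0)<F_\nu(y_1)$. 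Because $\spt\mu\subseteq B$ and $\mu$ has positive density across each $H_i$, the map $k\mapsto \mu[X_\le(y_i,k)]$ is continuous and strictly monotone at $k_i:=s_y(x_0,y_i)$, so the splitting thresholds from Theorem \ref{T:nested}(a) satisfy $k(y_i)=k_i$ exactly. Nestedness then demands $X_\le(y_0,k_0)\subseteq X_\le(y_1,k_1)$; but $X_\le(y_0,k_0)\cap B = A_{--}\cup A_{-+}$ is not contained in $X_\le(y_1,k_1)\cap B = A_{--}\cup A_{+-}$, since $A_{-+}$ carries positive $\mu$-measure, giving the contradiction. The most delicate point is the exact identification $k(y_i)=k_i$, which relies on arranging $\mu$'s density to remain positive on a neighborhood straddling $H_i$ so the threshold interval $K(y_i)$ from Theorem \ref{T:nested}(a) degenerates to the singleton $\{k_i\}$.
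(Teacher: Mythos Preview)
Your forward direction and your contradiction construction (once you have a point of transversal intersection) are correct and essentially match the paper. The gap is in your bridge step: you claim that failure of pseudo-index forces $D_x s_y(x_0,y_0)$ and $D_x s_y(x_0,y_1)$ to be non-parallel at some $x_0$, and you justify this by saying that parallel gradients everywhere would make the level hypersurfaces of $s_y(\cdot,y)$ and of $I=s_y(\cdot,y_*)$ coincide as leaves of one foliation, whence $s_y(x,y)=\phi(I(x),y)$. That inference fails when level sets of $I$ are disconnected. Leaves of a foliation are connected by definition, so ``same foliation'' only tells you $s_y(\cdot,y)$ is constant on each \emph{connected component} of an $I$-level set; it can take different values on different components, so $s_y$ need not factor through $I$. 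A concrete obstruction: let $X\subset\R^2$ be a U-shaped open connected region whose two prongs lie below the crossbar, set $I(x)=x_2$, and define $s_y(x,y)=x_2$ on the left prong and on the crossbar but $s_y(x,y)=x_2+y\,\eta(x_2)$ on the right prong, with $\eta$ smooth, small, and vanishing to infinite order at the crossbar height. Then $D_x s_y$ is everywhere a positive multiple of $(0,1)$, yet $s$ is not pseudo-index (the injectivity of $z\mapsto\sigma_y(z,y)$ forced by non-degeneracy would give a contradiction at the two prong-points with equal $x_2$). So your contrapositive hypothesis ``non-parallel somewhere'' is never triggered, and your argument stalls.

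The paper gets around this by working with a weaker (and correct) negation. Its preparatory lemma shows pseudo-index is equivalent to the statement that the \emph{level sets} (not just their tangent hyperplanes) of $x\mapsto s_y(x,y)$ are independent of $y$. Negating that yields $y_0,y_1,k_0$ with $X(y_0,k_0)$ not a level set of $s_y(\cdot,y_1)$; one then picks $\bar x\in X(y_0,k_0)$, sets $k_1=s_y(\bar x,y_1)$, and must allow that the two hypersurfaces through $\bar x$ may intersect \emph{tangentially}. In that case a short perturbation argument---moving $\bar x$ slightly along $D_x s_y(\bar x,y_0)$ and adjusting $k_1$---still produces four nonempty cells $A_{\lessgtr\lessgtr}$, after which your measure construction (which matches the paper's) finishes the job. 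The fix to your proof is either to add this tangential-case perturbation, or to replace the parallel-gradient sub-claim by the level-set-independence characterization.
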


Under the additional hypothesis that each level set $X(y,k)$ is connected, the "if" assertion in the preceding proposition follows immediately from Theorem 3.2 in \cite{P2} (which, under the additional connectedness assumption, asserts that if the optimal map is continuous for all  marginals, with densities bounded above and below, then $s$ must be of index form) and Theorem \ref{T:nested}: if $(s,\mu,\nu)$ is nested for each $(\mu,\nu)$, Theorem \ref{T:nested} implies that the optimal map is continuous whenever $\nu$ has connected supported, contradicting  the result in \cite{P2}.

We offer a self-contained proof in an appendix.  Aside from eliminating the connectedness assumption on the $X(y,k)$, our proof has the additional advantage of being elementary;  the proof of Theorem 3.2 in \cite{P2} relies on a sophisticated result of Ma, Trudinger and Wang \cite{mtw}, which asserts that $s$-convexity of the target is a necessary requirement  for optimal maps to be continuous for all marginals with densities bounded above and below.

\section{Criteria for nestedness}
\label{S:level sets}


Theorem \ref{T:nested} illustrates the powerful implications of nestedness,
when it is present.  In this section,  we exploit the machinery of level set dynamics
to derive several alternative characterizations of nestedness 
which --- for sufficiently smooth data --- may be easier to check in practice.   
The first of these asserts that nestedness is
essentially equivalent to $X_\le(y,k^+(y))$ expanding outward at each point on its boundary
as $y$ is increased,  assuming its boundary hits $\partial X$ transversally. 
To describe this outward expansion requires us to derive an analytic expression for the normal
velocity of $X_\le(y,k^+(y))$;  this normal velocity also appears naturally in the integrodifferential analog of the 
Monge-Amp\`ere type equation adapted to multi-to-one dimensional transport which is derived 
at \eqref{MA type} below.
A second characterization, also requiring transversality, asserts nestedness is equivalent to the existence
of a unique mapping $F:X \longrightarrow Y$ such that $y=\f(x)$ splits the population proportionately to $x$ for 
each $x \in X$.

We begin with a preparatory lemma. Anticipating our later needs, we allow for the possibility of matching
between the line and a Riemannian manifold $M$ with metric tensor $g_{ij}$ of Sobolev regularity
(such as $M=\p X\subset \R^{m}$ with $\p X \in C^1 \cap W^{2,1}$);
until then the reader may imagine $(M,g_{ij})$ represents Euclidean space.
Hausdorff measure ${\mathcal H}^d$ of dimension $d \le m$ on $M$,
functions of bounded variation,  sets of finite perimeter and their reduced boundaries are defined as in, e.g.~\cite{EvansGariepy92} for $M=\R^n$ and \cite{Federer69} for the general case.



\begin{lemma}[Motion of sublevel sets]\label{L:level set motion}
Let $M$ be a complete $m$-dimensional manifold with a Riemannian metric tensor 
$g_{ij} \in (C \cap W^{1,1})(M)$.
Let $X \subset M$ and $Y\subset \R$ be 
domains of finite perimeter. Fix $s \in C^{0,1}$
non-denegerate throughout $\overline {X} \times \overline Y$,
with $s_y \in C^1(\oX \times \oY)$ and each component of $(D_x s_y,s_{yy})$ in $C(Y; (C\cap W^{1,1})(X))$.
Then
\begin{eqnarray}\label{no surface plateaus}
N := \{ (y,k) \in \overline Y \times \R \mid {\mathcal H}^{m-1}\left[\oX(y,k) \cap \p^* X\right]>0
\}
\end{eqnarray}
is closed and $N \cap  \{y \} \times \R$ is countable for each $y\in\overline Y$,
where 
$\partial^* X \subset \partial X$ denotes the reduced boundary of $X$.  
Setting $U:= \overline Y \times \R$, the area ${\mathcal H}^{m-1}[X(y,k)]$ varies continuously on 
$U \setminus N$.

If $f \in L^\infty(X \times Y)$ and $f_y := \frac{\p f}{\p y} \in L^\infty$, then
$$
\Phi(y,k) := \int_{X_\le(y,k)} f(x,y) d{\mathcal H}^m (x)
$$
defines a Lipschitz function on 
$U:=\overline Y \times \R$, with $\|\Phi\|_{C^{0,1}(U)}$ controlled
by the volume and perimeter 
of $X$, $\|(f,f_y,s_{yy},|D_x s_y|^{-1})\|_{L^\infty(X\times Y)}$ and 
\begin{equation}\label{level set upper bound}
\sup_{(y,k) \in U} {\mathcal H}^{m-1}[X(y,k)] <\infty.
\end{equation}
For a.e.\ $(y,k) \in U$,
\begin{eqnarray}\label{DPhi}
D\Phi(y,k) 
&:=& (\frac{\p \Phi}{\p y},\frac{\p \Phi}{\p k})
\\ \nonumber
&=& \int_{X(y,k)} f \frac{(-s_{yy},1)}{|D_x s_y|} d{\mathcal H}^{m-1}
+ \int_{X_\le(y,k)} (\frac{\p f}{\p y},0) d{\mathcal H}^m,
\end{eqnarray}
where $\mp s_{yy}^{(1\pm1)/2} /|D_x s_y|$
are the outward normal velocities of $X_\le(y,k)$ as either $y$ or $k$ is varied,
the other being held fixed.  If, in addition
$f\in C(Y, (C\cap W^{1,1})(X))$ then $\Phi \in C^1(U\setminus N)$.
\end{lemma}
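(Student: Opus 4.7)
The plan is to derive both the Lipschitz bound and the derivative formula \eqref{DPhi} from two ingredients: the co-area formula applied slice by slice, and implicit differentiation of the level hypersurfaces $X(y,k)=\{x\in X:s_y(x,y)=k\}$. Non-degeneracy of $s$ forces $|D_x s_y|>0$, so (via the Clarke-style argument already used in Proposition \ref{P:indifference structure}) $X(y,k)$ is locally a Lipschitz graph with outward unit normal $\hat n = D_x s_y/|D_x s_y|$ pointing out of $X_\le(y,k)$. Implicitly differentiating the identity $s_y(x(y),y)=k$ along a trajectory on the level set then yields outward normal velocity $-s_{yy}/|D_x s_y|$ as $y$ varies with $k$ fixed, and $+1/|D_x s_y|$ as $k$ varies with $y$ fixed, exactly matching the velocities claimed in the statement.

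For the $k$-derivative, the co-area formula applied to $x\mapsto s_y(x,y)$ on the Riemannian manifold gives
\begin{equation*}
\Phi(y,k)=\int_{-\infty}^{k}\int_{X(y,t)}\frac{f(x,y)}{|D_x s_y(x,y)|}\,d{\mathcal H}^{m-1}(x)\,dt,
\end{equation*}
so $\p_k\Phi=\int_{X(y,k)} f/|D_x s_y|\,d{\mathcal H}^{m-1}$ for a.e.~$k$, matching the $k$-component in \eqref{DPhi}. For the $y$-derivative I would split
\begin{equation*}
\Phi(y',k)-\Phi(y,k)=\int_{X_\le(y,k)}[f(x,y')-f(x,y)]\,d{\mathcal H}^m+\int_X f(x,y')[\chi_{X_\le(y',k)}-\chi_{X_\le(y,k)}]\,d{\mathcal H}^m,
\end{equation*}
whose first summand contributes $\int_{X_\le(y,k)} f_y\,d{\mathcal H}^m$ by dominated convergence, and whose second summand measures the mass swept across $X(y,k)$ as $y$ changes. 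Applying the co-area formula to the Lipschitz function $s_y(\cdot,y')-s_y(\cdot,y)$ on the symmetric difference $X_\le(y,k)\triangle X_\le(y',k)$, dividing by $y'-y$, and passing to the limit using the normal-velocity computation above, produces the boundary integral $-\int_{X(y,k)} f\, s_{yy}/|D_x s_y|\,d{\mathcal H}^{m-1}$, completing \eqref{DPhi}. The Lipschitz bound on $\Phi$ then follows by integrating these a.e.\ partial derivatives along paths in $U$: the $L^\infty$ hypotheses on $(f,f_y,s_{yy},1/|D_x s_y|)$ together with \eqref{level set upper bound} bound the integrands uniformly, while finite volume and perimeter of $X$ absorb contributions from $\p X$.

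For the exceptional set $N$: the sets $\oX(y,k)\cap \p^*X$ for distinct $k$ at fixed $y$ are pairwise disjoint subsets of $\p^*X$, whose total ${\mathcal H}^{m-1}$-measure is finite, forcing at most countably many $k$ to yield positive measure. Closedness of $N$ reduces to upper semicontinuity of $(y,k)\mapsto{\mathcal H}^{m-1}[\oX(y,k)\cap\p^*X]$, which follows from non-degeneracy because the hypersurfaces $X(y_n,k_n)$ converge locally in Hausdorff distance with controlled normals, under which ${\mathcal H}^{m-1}$-measure is upper semicontinuous. The continuity of ${\mathcal H}^{m-1}[X(y,k)]$ on $U\setminus N$ then follows by excluding precisely the boundary plateaus that could cause jumps. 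Finally, under $f\in C(Y;(C\cap W^{1,1})(X))$, both integrands appearing in \eqref{DPhi} depend continuously on $(y,k)\in U\setminus N$ with respect to the slice measure ${\mathcal H}^{m-1}\lfloor X(y,k)$, upgrading the conclusion to $\Phi\in C^1(U\setminus N)$.

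The main obstacle will be making the swept-mass computation rigorous under the weak regularity hypotheses: the metric tensor $g_{ij}$ is only $(C\cap W^{1,1})(M)$, the domain $X$ is merely of finite perimeter, and $D_x s_y,s_{yy}$ lie in $C(Y;(C\cap W^{1,1})(X))$ rather than $C^1$. This forces reliance on the Lipschitz (rather than smooth) co-area formula and on Clarke-type inversion, and demands a careful tracking of how contributions from $\p^*X$ feed into the Lipschitz constants -- which is exactly why the boundary-plateau set $N$ must be excised before any $C^1$ conclusion can be drawn.
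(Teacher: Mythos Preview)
Your derivative formula \eqref{DPhi} is obtained correctly: the co-area representation gives $\partial_k\Phi$ immediately, and the normal-velocity computation via implicit differentiation is the same as the paper's. But the route you take to the Lipschitz bound, to the continuity of $\mathcal H^{m-1}[X(y,k)]$, and to $\Phi\in C^1(U\setminus N)$ is genuinely different from the paper's, and has a gap.

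The paper does not work directly with difference quotients or swept mass. Instead it regularizes the indicator $1_{X_\le(y,k)}$ by a mollified Heaviside $\phi_\epsilon(s_y-k)$, producing smooth approximants $\Phi_\epsilon$ whose derivatives are computed by dominated convergence plus co-area. The key device you are missing is the \emph{generalized Gauss--Green formula}: every surface integral $\int_{X(y,k)} V\cdot\hat n_{X_=}\,d\mathcal H^{m-1}$ is rewritten as a volume integral over $X_\le(y,k)$ minus a boundary integral over $\partial^*X\cap\overline{X_\le(y,k)}$. The volume term is continuous in $(y,k)$ by dominated convergence; the boundary term is continuous precisely off $N$. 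This is how the paper proves continuity of $\mathcal H^{m-1}[X(y,k)]$ (take $V=\hat n_{X_=}$), how it gets uniform bounds on $D\Phi_\epsilon$ independent of $\epsilon$, and how it shows $D\Phi$ is continuous on $U\setminus N$ when $f\in C(Y;(C\cap W^{1,1})(X))$. Your assertion that the surface integrands ``depend continuously on $(y,k)$ with respect to the slice measure $\mathcal H^{m-1}\lfloor X(y,k)$'' hides exactly this difficulty: the slice measure itself moves with $(y,k)$, and without converting to a fixed domain of integration you have no mechanism to pass to the limit.

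Two smaller points. First, your argument for closedness of $N$ via Hausdorff convergence of hypersurfaces does not work as stated: Hausdorff convergence of $X(y_n,k_n)$ gives no control on $\mathcal H^{m-1}$ of the intersection with the fixed set $\partial^*X$. The paper instead writes $\mathcal H^{m-1}[\oX_\le(y,k)\cap\partial^*X]$ as a monotone limit of continuous functions $h_\epsilon$ (again using $\phi_\epsilon$), obtaining upper semicontinuity directly. Second, your swept-mass step applies co-area to $s_y(\cdot,y')-s_y(\cdot,y)$ on the symmetric difference; the correct slicing function is $s_y(\cdot,y)$ itself (the symmetric difference is contained in $\{x:|s_y(x,y)-k|\le |y'-y|\,\|s_{yy}\|_\infty + o(1)\}$), and making the limit rigorous again wants either the regularization or a Gauss--Green conversion.
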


\begin{proof}
Use the approximate Heavyside step function
$$
\phi_\epsilon(t) := \left\{
\begin{array}{lcr}
1 & {\rm if} & t \le 0, \cr
1-t/\epsilon & {\rm if} & t \in [0,\epsilon], \cr
0 & {\rm if} & t \ge \epsilon, 
\end{array}
\right.
$$
to define continuous functions
$$
h_\epsilon(y,k) = \int_{\partial^* X} \phi_\epsilon(s_y(x,y)-k) d{\mathcal H}^{m-1}(x)
$$
which are monotone in both $\epsilon>0$ and $k$,  and approximate
$$
h^+(y,k) = {\mathcal H}^{m-1}\left[\oX_\le(y,k) \cap \p^* X\right]
$$
pointwise from above.  This shows upper semicontinuity of $h^+$.
By symmetry,
$$
h^-(y,k) =  {\mathcal H}^{m-1}\left[\oX_<(y,k) \cap \p^* X\right]
$$
is lower semicontinuous.  Since
$$
(h^+-h^-)(y,k) =  {\mathcal H}^{m-1}\left[\oX(y,k) \cap \p^* X\right]
$$
we see $h^+ \ge h^-$, with equality holding outside of the closed set
$N$.  Since $X$ has finite perimeter and $\oX_\le(y,k)$ depends monotonically on $k$, 
for each $y$ there can only be countably many $k$ values with $(y,k) \in N$.

To compute the instantaneous rate of displacement of
$x \in X(y,k_0)$ as a function of $y$ near $y_0$, with $ k_0$ held fixed, 
 let $x(y)=\exp_{x_0} \lambda(y) \hat n_0$ denote the intersection of 
$X(y,k_0)$ with the geodesic parallel to outer unit normal $\hat n_0 := D_x s_y(x_0,y_0)/|D_x s_y|$ at $x_0 \in X(y_0,k_0)$,  meaning  $x(y)={x_0} + \lambda(y) \hat n_0$ in the Euclidean case.
Applying the implicit function theorem to $s_y(x(y),y) = k_0$,  we see a differentiable solution $\lambda$ exists near $y_0$ held fixed with 
$$
\lambda'(y_0) = \frac{- s_{yy}}{|D_x s_y|} (x_0,y_0).
$$
This gives the outer normal velocity $x'(y_0)=\lambda'(y_0) \hat n_0$ of $\partial X_\le(y,k_0)$ at $x_0$.
The corresponding normal velocity as $k$ is varied with $y_0$ held fixed is similar and even simpler to compute.

Now suppose $f \in C(Y,C\cap W^{1,1}(X))$ temporarily, with $f_y \in L^\infty$, and define
$$
\Phi_\epsilon(y,k) := \int_X f(x,y) \phi_\epsilon(s_y(x,y)-k) d{\mathcal H}^m (x).
$$
We claim the $\Phi_\epsilon$ are equi-Lipschitz approximations to $\Phi$.
The dominated convergence theorem and co-area formula \cite{EvansGariepy92} \cite{Federer69}
yield
\begin{equation}\label{approximate coarea}
D\Phi_\epsilon(y,k) 
= \int_X (f_y,0) \phi_\epsilon(s_y-k) d{\mathcal H}^m +
\frac{1}{\epsilon} \int_0^\epsilon dt \int_{X(y,k+t)} f \frac{(-s_{yy},1)}{|D_x s_y|} d{\mathcal H}^{m-1}.
\end{equation}
Letting $\nabla_X$ denote the divergence operator on $X$, while
 $\hat n_X$ and $\hat n_{X_=} := D_x s_y/|D_x s_y|$ denote the outward unit normals to $X$ and 
$X_\le(y,s_y(x,y))$ respectively,  the generalized Gauss-Green formula 
\cite[Proposition 5.8]{HofmannMitreaTaylor10} asserts
\begin{equation}\label{Gauss-Green}
\int_{X(y,k)} V \cdot \hat n_{X_=} d{\mathcal H}^{m-1} 
= \int_{X_\le(y,k)} \nabla_X \cdot V d{\mathcal H}^m 
-  \int_{\partial^* X \cap \overline{X_\le(y,k)}} V \cdot \hat n_X d{\mathcal H}^{m-1}
\end{equation}
for any continuous Sobolev vector field $V$ on $\overline X$.  We are interested in applying
this to vector fields which depend on an additional parameter $y \in Y$, whose components
in local coordinates lie in $C(Y,(C\cap W^{1,1})(X))$.
We claim this integral then depends continuously
on $(y,k) \in U\setminus N$,  and its magnitude is bounded throughout $U$ by
\begin{equation}\label{equi Lipschitz}
\left|\int_{X(y,k)} V \cdot \hat n_{X_=} d{\mathcal H}^{m-1} \right|
\le \|V\|_{W^{1,1}(X)} + \|V\|_{L^\infty(X)} {\mathcal H}^{m-1}(\partial^* X).
\end{equation}
The continuous dependence 
on $(y,k)$ follows from the dominated convergence theorem,
and the fact that 
$1_{X_\le(y',k')}$ converges to $1_{X_\le(y,k)}$  
Lebesgue almost everywhere on $X$ and ${\mathcal H}^{m-1}$-a.e.\ on $\partial^* X$ as 
$(y',k') \to (y,k) \not\in N$, in view of \eqref{no surface plateaus}.  In particular, we use this 
argument to show the last summand in
\begin{eqnarray*}
&& \left|\int_{X_\le(y',k')} \nabla_X \cdot V(x,y') d{\mathcal H}^m(x) -  \int_{X_\le(y,k)} \nabla_X \cdot V(x,y) d{\mathcal H}^m(x) \right|
\\ &\le& \int_{X_\le(y',k')} |\nabla_X \cdot (V(y')- V(y)) | d{\mathcal H}^m +  
\int_{X_\le(y,k) \Delta X_{\le(y',k')}} |\nabla_X \cdot V(y)| d{\mathcal H}^m
\end{eqnarray*}
vanishes in the limit $(y',k') \to (y,k)$,  where $\Delta$ denotes the symmetric difference of the domains of integration;
the other summand vanishes by the continuous dependence in $L^1(X)$ of $\nabla_X \cdot V$ on $y'$.

Choosing $V= \hat n_{X_=}$ in \eqref{Gauss-Green} demonstates the continuity of ${\mathcal H}^{m-1}[X(y,k)]$ on $U \setminus N$.
Alternately, choosing $V=fs_{yy}^{(1\pm1)/2} D_xs_y/|D_x s_y|^2$ for fixed $y\in\overline Y$,
we have just shown the inner integrals in \eqref{approximate coarea} depend
continuously on small $t$ as long as $(y,k) \not\in N$,  
and are locally uniformly bounded \eqref{equi Lipschitz}
with a constant depending only on ${\mathcal H}^{m-1}(\partial^* X)$ and 
\begin{equation}\label{old DPhi constant}
\sup_{y \in Y}\left\| \frac{f s_{yy}^{(1\pm1)/2} D_x s_y}{|D_x s_y|^2} \right\|_{(C \cap W^{1,1})\left({X}\right)} <\infty.
\end{equation}
Thus $\Phi_\epsilon$ converges locally uniformly on $U$ to a Lipschitz limit $\Phi_0$, and
$D\Phi_\epsilon$ converges pointwise on $U\setminus N$ to
\begin{equation}\label{DPhi_0}
D\Phi_0(y,k) 
= \int_{X_\le(y,k)} (\frac{\p f}{\p y},0) d{\mathcal H}^m +  \int_{X(y,k)} f \frac{(-s_{yy},1)}{|D_x s_y|} d{\mathcal H}^{m-1}.
\end{equation}
Since this surface intregral is controlled by the same constants, the preceding arguments show
$\Phi_0 \in C^{0,1}(U) \cap C^1(U \setminus N)$.  Since $N$ is Lebesgue negligible,
$\|\Phi_0 \|_{C^{0,1}(U)} = \|\Phi_0\|_{C^1(U \setminus N)}$; \eqref{DPhi_0} shows
the latter to be controlled by the listed constants.

On the other hand, the co-area formula \cite{EvansGariepy92} yields
\begin{eqnarray*}
|\Phi_\epsilon(y,k) - \Phi(y,k)| 
&\le& \int_{X_\le(y,k+\epsilon) \setminus X_\le(y,k)} |f(x,y)| d{\mathcal H}^m (x)
\\ &=& \int_0^\epsilon dt \int_{X(y,k+t)} \frac{|f|}{|D_x s_y|} d{\mathcal H}^{m-1}(x)  
\\ &\le& \epsilon \left\| \frac{f}{|D_x s_y|}\right\|_\infty 
\max_{(y,k) \in U} {\mathcal H}^{m-1}[X(y,k)]
\end{eqnarray*}
showing $\Phi=\Phi_0$ on $U$ and establishing the lemma for $f \in C^{0,1}(X \times Y)$.

We handle the case $f, f_y \in L^\infty(X \times Y)$ by approximation: mollification 
yields a sequence $f^\delta \in C^{0,1}(X \times Y)$ with $(f^\delta,f^\delta_y)$ uniformly
bounded and converging to $(f,f_y)$ pointwise a.e.\ as $\delta \to 0$. The dominated convergence
theorem asserts pointwise convergence of
$$
\Phi^\delta(y,k) := \int_{X_\le(y,k)} f^\delta(x,y) d{\mathcal H}^m(x)
$$
to $\Phi$.  On the other hand,  the version of the lemma
established above shows the $\Phi^\delta$ to be Lipschitz on $U$ with a constant 
independent of $\delta>0$.  Thus they converge uniformly to a limit $\Phi$ sharing 
the same Lipschitz constant.  The lemma also establishes \eqref{DPhi} on $U\setminus N$,
with $(\Phi^\delta,f^\delta)$ in place of $(\Phi,f)$.   We would like to use 
the dominated convergence theorem to pass to the limit $\delta \to 0$  in \eqref{DPhi}
for a.e.\ $(y,k)$. This works immediately when $f$ is continuous.  Otherwise,
let $Z \subset X \times Y$ denote the Lebesgue negligible set where 
$f^\delta$ fails to converge to $f$.  Fubini's theorem shows $Z(y) := \{x \in X \mid (x,y) \in Z\}$
has zero measure for a.e. $y \in Y$.  Applied to its indicator function,  the co-area
formula 
$$
0 = \int_X 1_{Z(y)} d{\mathcal H}^m = \int_\R dk \int_{X(y,k)} \frac{1_{Z(y)}}{|D_x s_y|} {d \mathcal H}^{m-1}
$$
then yields ${\mathcal H}^{m-1}$ negligibility of $Z(y) \cap X(y,k)$ for a.e.\ $k$.  For such $y$ and $k$,
the dominated convergence theorem permits passage to the $\delta \to 0$ limit in \eqref{DPhi}, 
to complete the proof.
\end{proof}

Our next goal is to establish Theorem \ref{T:non-nested}, 
which describes how the set of wives 
hypothetically paired with husband $y\in Y \subset \R$ move in response to changes in his type.



\begin{theorem}[Dependence of iso-husbands on husband type] 
\label{T:non-nested}
(a) Let $X \subset \R^m$ and $Y\subset \R$ be connected open sets of
finite perimeter, 
equipped with Borel probability measures $d\mu(x) =f(x) dx$ and $d\nu(y)=g(y)dy$ 
whose Lebesgue densities satisfy 
$\log f \in L^\infty(X)$ and $\log g \in L^{\infty}_{loc}(Y)$.
Assume $s\in C^2$ is non-degenerate
throughout $\overline {X} \times Y$, with all components of $(D_x s_y, s_{yy})$ in $C(Y; C \cap W^{1,1}(X))$  
Then the functions $k^\pm$ of Theorem~\ref{T:nested}(a) coincide. 
Moreover, $k:=k^\pm \in C^{0,1}_{loc}(Y)$ and 
$|k'(y)| 
 \le \|s_{yy}\|_{L^\infty(X)} + g(y) \| D_x s_y /f \|_{L^\infty(X)}/ {\mathcal H}^{m-1}(X(y,k(y)))$ a.e.

(b) If, in addition $\log f \in (C\cap W^{1,1})(X)$ 
and $\log g \in C^{0}_{loc}(Y)$ then $k$ is continuously differentiable
 outside the relatively closed set 
\begin{eqnarray}\label{bad set Z}
Z &:=& \{ y \in {\Y} \mid (y,k(y)) \in N {\rm\ from}\ \eqref{no surface plateaus} \}
\end{eqnarray}
and $k'(y)=-\frac{h_y}{h_k}(y,k(y))$ is given by \eqref{grad h} on $Y\setminus Z$.
As $y \in Y \setminus Z$ increases the outward normal velocity of
$X_\le(y,k(y))$ at $x \in X(y,k(y))$ is given by
$(k' - s_{yy})/|D_x s_y|$. 
If $\log g \in C(Y)$, then $k'(y)$ diverges to $+\infty$ at the endpoints of $Y$ unless ${\mathcal H}^{m-1}[X(y,k(y))]$ 
remains bounded away from zero in this limit.
\end{theorem}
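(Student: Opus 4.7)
The plan is to work entirely with the scalar function
$h(y,k) := \mu[X_{\le}(y,k)] - \nu((-\infty,y))$,
whose zero set in $k$ is exactly $K(y)=[k^{-}(y),k^{+}(y)]$. Applying Lemma~\ref{L:level set motion} to $\Phi(y,k) := \mu[X_{\le}(y,k)]$ with the $y$-independent density $f(x,y):=f(x)$: under the hypotheses of part (a), all regularity requirements of the lemma are met, yielding Lipschitz control of $\Phi$ on $\overline{Y}\times\R$ together with the a.e.\ formulas
$h_{k}(y,k) = \int_{X(y,k)} f/|D_{x}s_{y}|\, d{\mathcal H}^{m-1}$
and
$h_{y}(y,k) = -\int_{X(y,k)} fs_{yy}/|D_{x}s_{y}|\, d{\mathcal H}^{m-1} - g(y)$.

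To show $k^{-}=k^{+}$, fix $y\in Y$. Non-degeneracy makes $s_{y}(\cdot,y):X\to\R$ a $C^{1}$ submersion, so the co-area formula identifies $k\mapsto\mu[X_{\le}(y,k)]$ as absolutely continuous in $k$ with the integrand above as density. The range of this submersion is an interval since $X$ is connected, and both $k^{\pm}(y)$ lie in its interior because $0<\nu((-\infty,y))<1$ forces $X_{\le}(y,k^{\pm})$ and its complement to have positive Lebesgue measure. Hence every $k\in(k^{-},k^{+})$ lies in the interior of this range, so the preimage theorem produces $X(y,k)\cap X$ as a non-empty $C^{1}$ hypersurface of positive ${\mathcal H}^{m-1}$-measure. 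Since $\log f\in L^{\infty}$ bounds $f$ uniformly below, we conclude $h_{k}(y,k)>0$ throughout $(k^{-},k^{+})$, contradicting the constancy of $h(y,\cdot)$ on $[k^{-},k^{+}]$ unless $k^{-}=k^{+}=:k$. I expect this to be the main obstacle, as it requires dovetailing three ingredients---the per-slice co-area identity, the connectedness of the range of $s_{y}(\cdot,y)$, and the essential lower bound on $f$---to defeat the a priori possibility of constancy.

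Continuity of $k$ is then immediate from strict monotonicity of $h(y,\cdot)$, and differentiating $h(y,k(y))=0$ a.e.\ gives $k'(y)=-h_{y}/h_{k}$; combined with $|h_{y}+g|\le\|s_{yy}\|_{\infty}h_{k}$ and the lower bound $h_{k}(y,k)\ge {\mathcal H}^{m-1}(X(y,k))/\||D_{x}s_{y}|/f\|_{\infty}$, this produces the Lipschitz estimate displayed in (a). For part (b), the stronger regularity puts Lemma~\ref{L:level set motion} in its $C^{1}(U\setminus N)$ conclusion; since $h_{k}>0$, the classical implicit function theorem delivers $k\in C^{1}(Y\setminus Z)$ with $k'=-h_{y}/h_{k}$. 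The outer normal velocity follows by writing $x(y)$ for the curve through $x\in X(y,k(y))$ with $x'$ parallel to $\hat n=D_{x}s_{y}/|D_{x}s_{y}|$; implicit differentiation of $s_{y}(x(y),y)=k(y)$ yields $|D_{x}s_{y}|(\hat n\cdot x')+s_{yy}=k'$, i.e.\ $V=(k'-s_{yy})/|D_{x}s_{y}|$. Finally, the lower bound $k'(y)\ge g(y)/h_{k}(y,k(y))-\|s_{yy}\|_{\infty}$ together with $h_{k}\le \||D_{x}s_{y}|/f\|_{\infty}\,{\mathcal H}^{m-1}(X(y,k(y)))$ and the positivity of $g$ furnished by $\log g\in C(Y)$ give $k'(y)\to+\infty$ whenever ${\mathcal H}^{m-1}(X(y,k(y)))\to 0$ at an endpoint of $Y$.
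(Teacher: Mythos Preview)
Your approach mirrors the paper's: apply Lemma~\ref{L:level set motion} to obtain the a.e.\ partial derivatives \eqref{grad h} of $h$, establish $k^{+}=k^{-}$, then run an implicit function argument. Your proof that $k^{+}=k^{-}$ via the co-area formula is a slightly more elaborate variant of the paper's one-line observation (the open set $X_{<}(y,k^{+})\setminus X_{\le}(y,k^{-})$ carries no $\mu$-mass, hence is empty since $\log f\in L^{\infty}$), but both are valid.

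There is, however, a real gap in your passage from continuity of $k$ to $k\in C^{0,1}_{loc}(Y)$ in part~(a). You write ``differentiating $h(y,k(y))=0$ a.e.\ gives $k'(y)=-h_{y}/h_{k}$'' and then derive the displayed estimate, but this presupposes that $k$ is a.e.\ differentiable---which is exactly what you are trying to show. Since $h$ is only Lipschitz in part~(a), the classical implicit function theorem is unavailable; the paper invokes Clarke's implicit function theorem instead. For Clarke's theorem one needs every matrix in the generalized Jacobian to be nonsingular, which here amounts to bounding $h_{k}$ uniformly away from zero a.e.\ in a \emph{neighborhood} of the graph of $k$ (over compact subsets of $Y$), not merely at graph points. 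Your bound $h_{k}\ge \mathcal{H}^{m-1}[X(y,k)]\cdot\inf(f/|D_{x}s_{y}|)$ reduces the task to bounding $\mathcal{H}^{m-1}[X(y,k)]$ below near the graph, but nonemptiness of $X(y,k)$ alone does not deliver a uniform positive lower bound on its area. The paper supplies the missing ingredient: lower semicontinuity of the relative perimeter of $X_{\le}(y,k)$ in $X$, together with the $L^{1}$-convergence $1_{X_{\le}(y',k')}\to 1_{X_{\le}(y,k)}$ as $(y',k')\to(y,k)$, yields $\liminf_{(y',k')\to(y,k)}\mathcal{H}^{m-1}[X(y',k')]\ge\mathcal{H}^{m-1}[X(y,k(y))]>0$. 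Without this step your argument is circular.
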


\begin{proof}
Since the non-degeneracy of $s$ extends to $\oX \times Y$,  Proposition~\ref{P:indifference structure}
shows $\oX(y,k) := \oX_\le(y,k) \setminus \oX_<(y,k)$ to be the intersection with $\oX$ of an $m-1$ dimensional $C^1$ submanifold orthogonal to $D_x s_y(x,y) \ne 0$.  As in Theorem \ref{T:nested},  $k^\pm(y)$ represent
the maximal and minimal roots of the continuous function
\begin{equation}\label{h again}
h(y,k):=\mu[X_\le(y,k)]-\nu[(-\infty,y)], 
\end{equation}
which depends monotonically on $k$.
The open set $X_<(y,k^+(y))\setminus X_\le(y,k^-(y))$
carries none of the mass of $\mu$,  hence must be empty since $\log f$ is real-valued. 
Thus $k^+=k^-$ is continuous on $Y$ and $Z\subset Y$ is relatively closed.

Under the asserted hypotheses 
we claim $h\in C^{0,1}_{loc}(Y\times \R)$ and continuously differentiable outside of the set $N$ of zero measure from \eqref{no surface plateaus}.
Indeed, $h$ is locally Lipschitz on $Y \times \R$ 
according to Lemma~\ref{L:level set motion}, and its partial derivatives 
are given a.e. 
\begin{eqnarray}\label{grad h} 
h_k = \frac{\partial \Phi}{\partial k} =&  \displaystyle 
\int_{X(y,k)} f(x)   \frac{d{\mathcal H}^{m-1}(x)}{|D_x s_y(x,y)|}  \ge 0
& {\rm and}
\\ h_y = -g + \frac{\partial \Phi}{\partial y} = & \displaystyle- g(y) -
 \int_{X(y,k)} \frac{f(x) s_{yy}(x,y)}{|D_x s_y(x,y)|} d{\mathcal H}^{m-1}(x),
\nonumber \label{h_y}
\end{eqnarray}
adopting the notation $\Phi$ from \eqref{DPhi}.

In case (b) these derivative are continous outside of the closed negligible set $N$.
Since $h(y,k(y))=0$, if $h_k \ne 0$ the implicit function theorem then yields $k \in C^1_{loc}(Y \setminus Z)$,  
with $k' = - h_y/h_k$, and the stated bounds follow.  
In case (a),  the Clarke implicit function theorem yields the required bound on
$k\in C^{0,1}_{loc}(Y)$ \cite{Clarke83},
provided we can provide a positive lower bound for $h_k(y,k)$ a.e.\ in a neighbourhood of the graph 
$y=k(y)$ over compact subsets of $Y$.  Since $N\subset \oY \times \R$ has measure zero,
such a bound follows from \eqref{grad h}  provided we obtain a positive lower bound for 
${\mathcal H}^{m-1}[X(y,k)]$.  But this comes from lower semicontinuity of the relative
perimeter of $X_\le(y,k)$ in $X$ \cite{EvansGariepy92},  given that 
 $1_{X_\le(y',k')} \to 1_{X_\le(y,k)}$ Lebesgue a.e.\ as $(y',k') \to (y,k)$,  and the fact that
the $m-1$ dimensional submanifold
$X(y,k(y))$ is non-empty due to the connectedness of $X$.
 If ${\mathcal H}^{m-1}[X(y,k(y)]$ tends to zero at either endpoint of $Y$,  then
$h_k$ tends to zero but $h_y(y,k(y)) \to -g(y)$ does not,  showing $k'=-h_y/h_k$ diverges.

On compact subsets of $Y$,  
the outer normal velocity of $\partial X_\le(y_0,k(y_0))$ at $x_0$ comes from applying
Lemma \ref{L:level set motion} with $s(x,y)-\int^y k$ in place of $s$;
(the $\|s_{yy}\|_{W^{1,1}}$ bound hypothesized in that lemma is not needed for this particular claim).
\end{proof}


\begin{corollary}[Dynamic criterion for nestedness]
\label{C:dynamic nesting}
Adopting the hypotheses and notation of Theorem \ref{T:non-nested}(b):
If the model is nested then $k' - s_{yy} \ge 0$ for all $y\in Y\setminus Z$ and $x \in X(y,k(y))$,
with strict inequality holding at some $x$ for each $y$.  Conversely, if $Z=\emptyset$ and strict inequality holds
for all $y\in Y$ and $x \in X(y,k(y))$, then the model is nested.
\end{corollary}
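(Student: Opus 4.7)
The plan is to read the corollary as a straightforward translation, via the normal-velocity formula
\[
V(x,y) \;=\; \frac{k'(y)-s_{yy}(x,y)}{|D_x s_y(x,y)|}
\]
from Theorem~\ref{T:non-nested}(b), between the geometric condition of nestedness and the pointwise sign of $k'(y) - s_{yy}(x,y)$ (which controls $V$ since $|D_x s_y|>0$ by non-degeneracy). The main bookkeeping device throughout will be the scalar function $\phi(y) := k(y)-s_y(x_0,y)$ attached to a fixed $x_0 \in X$: when $\phi(y)=0$, the point $x_0$ lies in $X(y,k(y))$, and $\phi'(y)=k'(y)-s_{yy}(x_0,y)$ has the same sign as $V(x_0,y)$.

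For the forward direction I would fix $y_0 \in Y\setminus Z$ and $x_0 \in X(y_0,k(y_0))$, so that $\phi(y_0)=0$. Monotonicity of $y \mapsto X_\le(y,k(y))$ (a consequence of nestedness) forces $x_0 \in X_\le(y,k(y))$, hence $\phi(y) \ge 0$, for all $y \ge y_0$ in $Y$; differentiability of $\phi$ at $y_0$, supplied by Theorem~\ref{T:non-nested}(b) because $y_0 \notin Z$, then yields $\phi'(y_0) \ge 0$, i.e., $k'(y_0) - s_{yy}(x_0,y_0) \ge 0$. To produce a single $x \in X(y_0,k(y_0))$ at which the inequality is strict I would differentiate the identity $h(y,k(y)) = 0$ from \eqref{h again} and substitute the formulas \eqref{grad h}; after cancellation this yields
\[
\int_{X(y,k(y))} f(x)\, \frac{k'(y)-s_{yy}(x,y)}{|D_x s_y(x,y)|}\, d{\mathcal H}^{m-1}(x) \;=\; g(y),
\]
whose right side is strictly positive since $\log g \in C^{0}_{loc}(Y)$ forces $g>0$. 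Combined with $f>0$ and $V \ge 0$ on $X(y_0,k(y_0))$, the integrand must be strictly positive on a set of positive ${\mathcal H}^{m-1}$ measure.

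The converse is the delicate direction: assuming $Z = \emptyset$ and $V(x,y) > 0$ for every $y \in Y$ and $x \in X(y,k(y))$, I must upgrade this infinitesimal inequality to the set inclusion $X_\le(y_0,k(y_0)) \subseteq X_<(y',k(y'))$ required by nestedness (which suffices because $g>0$ makes every nondegenerate subinterval $\nu$-positive). Fix $x_0 \in X_\le(y_0,k(y_0))$, so $\phi(y_0) \ge 0$, and suppose for contradiction that $\phi(y') \le 0$ for some $y' > y_0$. The plan is a first re-entry argument: set
\[
T \;:=\; \sup\bigl\{y \in [y_0,y'] : \phi \ge 0 \text{ on } [y_0,y]\bigr\}.
\]
Continuity gives $\phi \ge 0$ on $[y_0,T]$, and either $T=y'$ with $\phi(T) \le 0$ (from the assumed sign of $\phi(y')$), or $T<y'$ with sequences $y_n \downarrow T$ and $\phi(y_n) < 0$ (forced by $T$ being the supremum). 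Either way $\phi(T)=0$, so $x_0 \in X(T,k(T))$ and the hypothesis delivers $\phi'(T) > 0$. A first-order Taylor expansion at $T$ then produces the contradiction: if $T=y'$ it yields $\phi < 0$ just to the left of $y'$, contradicting $\phi \ge 0$ on $[y_0,y')$; if $T<y'$ it yields $\phi > 0$ just to the right of $T$, contradicting the sequence $y_n \downarrow T$ on which $\phi$ is negative.

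The only real difficulty lies in the converse, where the first re-entry device is the means by which a strict infinitesimal sign condition on $V$ gets promoted to the finite set-theoretic inclusion of nestedness. The hypotheses $Z=\emptyset$ and uniform strict positivity of $V$ throughout the diagonal $\{(x,y) : x \in X(y,k(y))\}$ are precisely what is needed so that, whichever $T$ the supremum produces, the $C^1$-regularity of $k$ and the strict sign of $\phi'(T)$ are both available to close the argument.
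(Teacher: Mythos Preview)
Your proof is correct and follows essentially the same route as the paper: both translate the normal-velocity formula of Theorem~\ref{T:non-nested}(b) into the sign of $k'-s_{yy}$, and both invoke the integral identity \eqref{MA type} (with $g>0$) to extract a point of strict positivity in the forward direction. The paper's converse is stated as a one-line geometric assertion that positive normal velocity at every boundary point forces $X_\le(y,k(y))$ to expand outward and hence increase strictly; your scalar function $\phi(y)=k(y)-s_y(x_0,y)$ and first-entry-time argument are a careful analytic unpacking of exactly that assertion, so the two proofs differ only in the level of detail supplied.
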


\begin{proof}
Away from $Z$, continuous differentiability of $k=k^\pm$ and
the fact that the outward normal velocity of
$X_\le(y,k(y))$ at $x \in X(y,k(y))$ is given by $\frac{k'-s_{yy}}{|D_xs_y|}$ 
are established in Theorem \ref{T:non-nested}. 
Differentiating $\nu[(-\infty,y)] = \mu[X_\le(y,k(y))]$ gives
\begin{eqnarray}
g(y) &=& \frac{d}{dy} \int_{X_\le(y,k(y))} f(x) d^{m} x \cr
&=& \int_{X(y,k(y))} \frac{k'(y) - s_{yy}(x,y)} {|D_xs_y|} f(x) d{\mathcal H}^{m-1}(x),
\label{MA type}
\end{eqnarray}
at each $y\in Y\setminus Z$,  according to Lemma \ref{L:level set motion}.
If the model is nested, so that $y\in Y \mapsto X_\le(y,k(y))$ is increasing,
this velocity must be non-negative at each boundary point.  Positivity of $g$
in the formula above shows $k' - s_{yy}$ must  be positive at some boundary point.

Conversely,  if this velocity
is positive at each boundary point,  it means $X_\le(y,k(y))$ expands outwardly with $y$ 
at each boundary point $x$,  hence increases strictly with $y$ over the interval $Y$.
This confirms the model is nested.
\end{proof}

\begin{remark}[Boundary transversality of indifference sets]
\label{R:transversality}
If $y \in Z$ in \eqref{bad set Z}, the indifference set 
of $y$ must intersect $\partial X$
in a set of positive area. This can only happen if the normal $D_x s_y$ to this
indifference set coincides with the normal
to $\partial^* X$ almost everywhere on their intersection.

A sufficient condition for the set $Z \bigcap Y$ to be empty in \eqref{bad set Z} is therefore that 
the closure of any
potential indifference set  in $X$ of a husband type $y \in Y$ 
intersects $\partial X$ transversally.
For a Lipschitz domain $X$ this transversality implies the intersection is a Lipschitz
submanifold
of codimension 1 in $\partial X$ via the Clarke implicit function theorem \cite{Clarke83}.
\end{remark}

\begin{remark}[Monge-Amp\`ere type integrodifferential equation]
Identifying $k=v'$ and $X(y,k(y))= \f^{-1}(y)$, 
equation \eqref{MA type} should be compared to the Monge-Amp\`ere type equation
\begin{equation}\label{cMA}
g(y) =\pm  f(\f^{-1}(y))   \det [D^2 v - D^2_{yy} s] / \det \left[{D^2_{xy} s} \right]_{x=\f^{-1}(y)}
\end{equation}
which arises in the $n=m$ framework of Ma, Trudinger and Wang \cite{mtw}.
This comparison highlights the fact that positivity of the normal velocity $[v''-s_{yy}]_{y=\f(x)}$ 
plays a role in the multi-to-one dimensional problem analogous to strict ellipticity for \eqref{cMA}.
Indeed, Corollary \ref{C:non-zero speed} of the final section exploits this positivity 
to establish a Lipschitz bound on $\f$ and allow us to initiate our bootstrap to higher regularity.
In the same section, \eqref{distributional map gradient}  shows the Jacobian version of 
the balance condition \eqref{MA type} analogous to \eqref{1D Monge-Ampere} takes the 
form expected from the co-area formula, namely
$$
g(y) = \int_{F^{-1}(y)} \frac{f(x)}{|D\f(x)|} d{\mathcal H}^{m-1}(x).
$$
\end{remark}

\begin{corollary}[Unique splitting criterion for nestedness]
\label{C:unique splitting}
A model $(s,\mu,\nu)$ satisfying the hypotheses of Theorem \ref{T:non-nested}(b)
with $Z \bigcap Y=\emptyset$
is nested if and only each $x \in X$ corresponds to a unique $y \in Y$ splitting the population
proportionately, i.e. which satisfies
\begin{equation}\label{population split}
\int_{X_\le (y, s_y(x,y))} d\mu = \int_{-\infty}^y d\nu.
\end{equation}
In this case, the optimal map from $\mu$ to $\nu$ is given by $\f(x)=y$.
\end{corollary}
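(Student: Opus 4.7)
The plan is to prove the two implications separately, exploiting that $Z\cap Y=\emptyset$ gives $k\in C^1(Y)$ via Theorem~\ref{T:non-nested}(b), that $\log f\in L^\infty(X)$ makes $\mu$ and Lebesgue measure mutually absolutely continuous on $X$, and that $\log g\in C^0(Y)$ forces $g>0$ on $Y$ and hence $\spt\nu=\overline Y$. The forward direction is essentially a reading of Theorem~\ref{T:nested}: if $(s,\mu,\nu)$ is nested, the strict inclusions $X_\le(y_1,k(y_1))\subset X_<(y_2,k(y_2))$ for $y_1<y_2$ in $Y$ force the level sets $X(y_1,k(y_1))$ and $X(y_2,k(y_2))$ to be disjoint, while Theorem~\ref{T:nested}(d) produces a continuous $\bar F\colon X\to Y$ assigning to each $x\in X$ the $y$ with $x\in X(y,k(y))$ (the gaps in that construction being closed by $\spt\nu=\overline Y$ and by $Z\cap Y=\emptyset$); this $y$ is then unique for $x$ and satisfies \eqref{population split}.

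For the converse, set $\phi(x,y):=s_y(x,y)-k(y)$ and define $y\colon X\to Y$ by letting $y(x)$ denote the unique zero of $\phi(x,\cdot)$ in $Y$. The target is the set identity
\begin{equation*}
\{x\in X:y(x)\le y_0\}=X_\le(y_0,k(y_0))\qquad\text{for every}\ y_0\in Y,
\end{equation*}
since this immediately delivers nestedness (via $y_1<y_2\Rightarrow\{y\le y_1\}\subseteq\{y<y_2\}$), after which Theorem~\ref{T:nested}(b) identifies $F(x)=y(x)$ as the unique optimizer. My strategy is to consider the two ``exceptional'' sets $A:=\{y(\cdot)<y_0\}\cap X_>(y_0,k(y_0))$ and $B:=\{y(\cdot)>y_0\}\cap X_<(y_0,k(y_0))$ and prove both are empty. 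For $x\in A$, the intermediate value theorem applied to the continuous function $\phi(x,\cdot)$, together with uniqueness of its zero at $y(x)<y_0$, forces $\phi(x,y)>0$ throughout $(y(x),\sup Y)$---any sign change past $y(x)$ would manufacture a second zero. Thus $A\subseteq X_>(y,k(y))$ for every $y>y_0$, whence the mass-balance identity $\mu[X_>(y,k(y))]=1-\nu((-\infty,y))\to 0$ as $y\to(\sup Y)^-$ yields $\mu(A)=0$, so that $A$ has Lebesgue measure zero. A parallel argument, now letting $y\to(\inf Y)^+$ and using $\mu[X_<(y,k(y))]=\nu((-\infty,y))\to 0$, gives $\mu(B)=0$.

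The main obstacle is upgrading ``$A$ has Lebesgue measure zero'' to ``$A$ is empty''. Since $X_>(y_0,k(y_0))$ is automatically open, this reduces to showing $\{y(\cdot)<y_0\}$ is open, i.e.\ that $y\colon X\to Y$ is continuous. The implicit function theorem applied to $\phi(x,y(x))=0$ provides continuity at each $x_0$ where $\phi_y(x_0,y(x_0))=s_{yy}(x_0,y(x_0))-k'(y(x_0))\ne 0$; at degenerate points $y(x_0)$ must be a local extremum of $\phi(x_0,\cdot)$, a configuration I expect to rule out by exploiting the positivity of the integral in \eqref{MA type}, which prevents $k'-s_{yy}$ from vanishing identically on any level set. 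A secondary concern is possible escape of $y(x_n)$ to $\partial Y$ along sequences $x_n\to x\in X$, which should be precluded by local boundedness of $s_y$ combined with the endpoint behaviour of $k$ described in Theorem~\ref{T:non-nested}(b). Once continuity is in hand, $A$ is open with Lebesgue measure zero and therefore empty, and likewise for $B$, completing the proof.
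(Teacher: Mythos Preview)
Your forward direction is fine and close to the paper's: once nestedness is assumed, the hypotheses of Theorem~\ref{T:non-nested}(b) force $k^+=k^-$ and $\spt\nu=\overline Y$, so Theorem~\ref{T:nested}(c)(d) furnishes a continuous $\bar F:X\to Y$ with $x\in X(\bar F(x),k(\bar F(x)))$ for every $x$; the strict inclusions rule out a second splitting $y$. The paper argues uniqueness a bit more directly (if $y<y'$ both split at the same $x$, then $x\in X_\le(y,k(y))\subset X_<(y',k(y'))$ forces $s_y(x,y')<k(y')$, contradiction), but the content is the same.

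Your converse has a real gap. Reducing $A=\emptyset$ to openness of $\{y(\cdot)<y_0\}$ is correct, but you have not established continuity of $y(\cdot)$. Your proposed fix via \eqref{MA type} does not work: that identity only says the \emph{average} of $k'-s_{yy}$ over the level set is positive, not that it is nonzero at every point, so degenerate points $x_0$ with $\phi_y(x_0,y(x_0))=0$ are not excluded. In fact the degenerate-point worry is a red herring: a standard sequential argument (if $x_n\to x$ and $y(x_n)\to y_*\in Y$, then $\phi(x,y_*)=0$, so $y_*=y(x)$ by uniqueness) handles any interior limit without the implicit function theorem. The genuine obstacle is the one you flag as ``secondary'': ruling out $y(x_n)\to\partial Y$. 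Your appeal to the endpoint behaviour of $k'$ in Theorem~\ref{T:non-nested}(b) is too vague---that statement is conditional, and even when $k'\to+\infty$ it does not immediately control $k$ itself or prevent $s_y(x_n,y(x_n))=k(y(x_n))$ from being satisfied along a sequence escaping to $\partial Y$. The hypotheses place no regularity on $s$ at $\overline Y\setminus Y$, so this is not a formality.

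The paper sidesteps the continuity issue entirely by arguing through the normal velocity. Since $Z\cap Y=\emptyset$, Theorem~\ref{T:non-nested}(b) gives $k\in C^1(Y)$, so $(x,y)\mapsto k'(y)-s_{yy}(x,y)$ is continuous. If this were negative at some $(x_0,y_0)$ with $x_0\in X(y_0,k(y_0))$, then locally the level set moves \emph{inward} as $y$ increases. Any $x'$ just outside $X_\le(y_0,k(y_0))$ near $x_0$ is therefore hit by $X(y',k(y'))$ for some $y'<y_0$ (boundary sweeps outward as $y$ decreases); on the other hand, $\mu[X_\le(y,k(y))]\to 1$ as $y\uparrow\sup Y$ forces $x'$ into some $X_\le(y,k(y))$ with $y>y_0$, and the infimum $y''$ of such $y$ puts $x'$ on $X(y'',k(y''))$. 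Now $y'\ne y''$ both split the population at $x'$, a contradiction. This yields $k'-s_{yy}\ge 0$ on every level set, hence monotonicity of $y\mapsto X_\le(y,k(y))$; strictness follows since a common boundary point would again give two splitting $y$'s. The argument is entirely local in $Y$ and never needs global control of $y(\cdot)$.
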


\begin{proof}
First assume the model is nested,  and suppose both $(x,y)$ and $(x',y')$ satisfy \eqref{population split},  
with $y\ne y'$,  so $k(y)=s_y(x,y)$ and $k(y') =s_y(x',y')$.
Taking $y<y'$ without loss of generality,  the hypothesis  $g>0$ of 
Theorem~\ref{T:non-nested} combines with the conclusion
$x \in X_\le(y,k(y)) \subset X_<(y',k(y'))$ of Theorem~\ref{T:nested} to imply 
$s_y(x,y')<k(y') = s_y(x',y')$.  Thus $y \ne y'$ implies $x \ne x'$ as desired.

Conversely,  suppose each $x \in X$ corresponds to a single $y$ splitting the 
population proportionately.  To prove the model is nested,  let us first check that
the outward velocity
$(k' - s_{yy})/|D_x s_y| \ge 0$ of $X_\le(y,k(y))$ at $x \in \partial X_\le(y,k(y))$ given
by Theorem \ref{T:non-nested} is non-negative for all $y\in Y$;
the moving boundary is a $C^{0,1}$ submanifold of $X$ according to the non-degeneracy of 
Proposition \ref{P:indifference structure}.

To derive a contradiction,  suppose $k'-s_{yy}<0$ for some $y_0 \in Y$ and $x_0 \in X(y_0,k(y_0))$.
Continuity shows this remains true for all nearby $y$ and $x \in X(y,k(y))$,  thus the boundary
of $X_\le(y,k(y))$ moves continuously inward near $x_0$ as $y$ increases through $y_0$.
Each point $x'$ in a sufficiently small neighbourhood $N_r :=B_r(x_0)  \setminus X_\le(y_0,k(y_0))$ 
therefore belongs to $X(y',k(y'))$ for some $y'<y_0$.
On the other hand,  
$$
\lim_{y\uparrow \bar y} \mu[X_\le(y,k(y))] = \lim_{y \uparrow \bar y} \nu[(\infty,y)] =1
$$
where $\bar y :=\sup Y$.
Since $\mu(N_r)>0$ by hypothesis, we conclude $N_r$ intersects $X_\le(y,k(y))$ for 
$y>y_0$ sufficiently close to $\bar y$.  Fix $x'$ from this intersection and let 
$y''$ denote the infimum of points $y>y_0$
such that $x' \in X_\le(y,k(y))$. Then $x' \in X(y',k(y'))\cap X(y'',k(y''))$ for $y''\in (y_0,\bar y)$,
with both $y'<y_0$ and $y''$ splitting the population proportionately at $x'$,  the desired contradiction.

This establishes $y\in Y \mapsto X_\le(y,k(y))$ is monotone non-decreasing.
It remains to confirm this monotonicity is strict.
Unless $X_\le(y,k(y)) \subset X_<(y',k(y'))$ for each pair of points $y<y'$ in $Y$,  some
$x$ lies in the boundary of both sets.  But then both $y$ and $y'$ split the population
proportionately at $x$.  This contradiction implies the model is nested,
and Theorem \ref{T:nested} implies the stable matching is given by $F(x)=y$.
\end{proof}

\begin{remark}[Methodological limitations are sharp]
This corollary confirms that nestedness is essentially a sharp condition for our method of solving
the problem to work;  in its absence the matching function $\f$ given by the procedure above fails to be 
well-defined: when the level sets $X(y,k)$ and 
$X(\bar{y},\bar{k})$ selected to match with $y\neq \bar{y}$ intersect at some $x$,
our construction would attempt to simultaneously assign both 
$\f(x):=y$ and $\f(x):=\bar{y}$.  
\end{remark}

\section{Examples}
\label{S:examples}

In the next sections, we address the higher regularity of optimal maps and potentials in nested problems.
Before doing so, we discuss several related examples demonstrating the phenomena
we subsequently analyze.   The first is nested, the second is not and the third explores the transition.  
All involve maximizing the bilinear surplus $s(x,y) = x \cdot y$ between two probability measures 
on $\R^m$,  which is equivalent to minimizing the quadratic cost $c(x,y) = \frac12|x-y|^2$.
In the first example,   the target measure will be supported on a segment; 
in the second and third, it will be supported on a circular arc.    These examples admit unique solutions
given explicitly by optimal maps, but demonstrate that such maps and the corresponding
potentials $u$ and $v$ will not necessarily
be smooth at the boundary. Although these examples are solved using classical methods and symmetry,  
they guide our intuition for what to expect from problems which do not admit explicit solution.

Besides arising frequently in different applications \cite{CullenPurser84} \cite{McCann97} \cite{RochetChone98}
\cite{FrischMatarreseMohayaeeSobolevskii02} \cite{HakerTannenbaum03},
the bilinear surplus / quadratic cost is the easiest objective to analyze when $mn > 1$. For these reasons
it has played a seminal role in theoretical developments  \cite{KnottSmith84} \cite{bren} \cite{Caffarelli92} \cite{Caffarelli96b}.
With this surplus function, target measures supported on lower dimensional sets were considered implicitly in 
\cite{CuestaMatran89} \cite{RuschendorfRachev90} \cite{McCann95} and explicitly in 
\cite{GM2}.  The basic result dating to these early works is that the payoff functions $u,v$ which
optimize \eqref{dual} may be taken to be convex on $\R^m$,  and that $\gamma$ optimizes \eqref{MK}
if and only if it is supported in the subdifferential 
$$
\partial u := \{ (x,y) \in \R^{m} \times \R^m \mid u(z) \ge u(x) + y \cdot (z-x) \quad \forall z \in \R^m\}
$$
of some convex $u:\R^m \longrightarrow \R \cup \{+\infty\}$.  In particular,  the optimal $\gamma$
is unique provided $\mu$ vanishes on Lipschitz hypersurfaces \cite{McCann95}, 
or at least on those hypersurfaces parameterized by convex differences \cite{GM} \cite{Gigli11}.

\begin{example}[From convex volumes to segments]
\label{E:segment}
Fix  $s(x,y) = x \cdot y$ on $\R^m \times \R^m$, and
consider transporting volume from a 
convex body $X \subset \R^m$ 
to a uniform meassure on a subsegment 
of the $x_1$-axis. 
In this case the convex payoff function $u(x_1,\ldots,x_m) = U(x_1)$ turns out to depend on $x_1$ only,
and the iso-husband sets consist of (hyper-)planes of constant $x_1$;
convexity ensures that --- apart from the two supporting hyperplanes --- these hit $\partial^* X$ transversally,
so $Z$ is empty.
Since these hyperplanes do not intersect each other, Corollary \ref{C:unique splitting} shows
this problem is nested.  The optimal map $y=\f(x)=(U'(x_1),0,\dots,0)$ depends monotonically on $x_1$,  
and can be found by solving a problem in single variable calculus, analogous to \eqref{non-decreasing map}.
Taking $X$ to be a ball leads to elliptic integrals not explicitly soluble,  
but transporting the solid paraboloid 
$$X := \{x \in \R^m \mid \frac12 \sum_{i=2}^{m} x_i^2 < x_1 < const\}$$ 
to a segment $Y = (0, L\hat e_1)$ of the appropriate length 
yields the optimal map  
$\f(x_1,\ldots,x_m) = (x_1^{(m+1)/2},0,\ldots,0)$ and potentials 
$u(x) = \frac{2}{m+3} x_1^{1 + (m+1)/2}$ and $v(y) =  \frac{m+1}{m+3} y^{1 + 2/(m+1)}$ explicitly.
Their behaviour near the origin shows we cannot generally expect $v$ to be better than $C^{1,\frac{2}{m+1}}$; 
similarly, we expect $F \in C^{\frac m 2, \frac 1 2}$ 
and $u \in C^{\frac{m} 2 + 1, \frac 1 2}$ 
to be sharp H\"older exponents near the origin, at least for $m$ even and sufficiently convex domains $X$.   
This lack of $C^2$ smoothness of $v$ at the boundary of $Y$ was predicted by Theorem \ref{T:non-nested};
like the lack of  higher order smoothness of $F$ and $u$ 
at the boundary of $X$, it directly reflect the unequal dimensions of the source and target.

In fact, in the absence of strong convexity of $X$, we cannot even expect this much smoothness up to the boundary.  If we consider instead
$$X := \{x \in \R^m \mid \frac12 \Big(\sum_{i=2}^{m} x_i^2 \Big)^k< x_1 < const\}$$ 
for $k\geq 1$, the optimal map takes the form $F(x) = Cx_1^{1+\frac{m-1}{2k} }$.  By choosing $k$ large enough, this shows that we cannot generally expect  $F \in C^{1, \alpha}$ up to the boundary for any $\alpha >0$, unless we assume $X$ has some uniform convexity.
\end{example}
Note that the surplus in the preceding example is of pseudo-index form, with index function $I(x_1,...,x_m) =x_1$.  Generally speaking, the bilinear surplus $s(x,y) =x\cdot y$ on an open set $X \subseteq \R^m$ and a smooth curve $Y \subset \R^m$ is of index form if and only if $Y$ is contained in a line.  The examples below treat the case where $Y$ is a circular arc and therefore $s$ is not of pseudo-index  form.  Special cases of these examples were studied in \cite{P2}, before the notion of nestedness had  been formulated.
\begin{example}[From punctured ball to punctured circle]
\label{E:circle}
Let 
$s(x,y) = x \cdot y$ on $\R^m \times \R^m$ %
and consider transporting 
volume $\mu= \frac{1}{{\mathcal H}^m[X]} {\mathcal H}^m$ from the 
punctured
 ball $X:=\{ x \in \R^m \mid 0< |x|<1\}$ to 
arclength $\nu = \frac1{{\mathcal H}^1[Y]} {{\mathcal H}^1}$
on the punctured circle $Y = \{ y \in \R^m \mid y_1^2 + y_2^2 =1, \quad
y_3 =\cdots = y_m =0<y_1+1\}$.  Since the map $\f(x) = \frac{(x_1,x_2)}{(x_1^2+x_2^2)^{1/2}}$ 
pushes $\mu$ forward to $\nu$,
and its graph lies in the subdifferential of the convex function $u(x)=\sqrt{x_1^2 + x_2^2}$, 
 we see $\f$ is optimal. Moreover $u$ and its Legendre transform
$$
v(y) = \left\{
\begin{array}{cc}
0 &{\rm if}\ |x|\le 1\ {\rm and}\ x_3 = \cdots = x_m=0,\\
+\infty &{\rm else},
\end{array}
\right.
$$
minimize the dual problem \eqref{dual}.  Although $v$ is smooth on $Y$, 
note $F$ and $Du$ fail to be smooth on the codimension $2$
submanifold $x_1=0=x_2$, where they are discontinuous.  The iso-husband sets $\f^{-1}(y)$ 
consist of connected 
hypersurfaces in $X$ bounded by this singular set;  they intersect the boundary of $X$ transversally.
On the other hand, parameterizing the punctured circle
$Y$ using $-\pi<\theta<\pi$,  the surplus function takes the form 
$s(x,\theta) = x_1 \cos \theta + x_2 \sin \theta$,  and the payoff $v(\theta)=0$. The 
potential indifference sets $X(\theta,k) := \{x\in \overline X \mid x_1 \sin\theta - x_2\cos \theta =k\}$ 
are connected. The iso-husband set $F^{-1}(\theta)$ occupies precisely half of
$X(\theta,v'(\theta))$ 
which shows the model is not nested.  

This example illustrates a phenomenon which our subsequent analysis shows to be
intimately connected with nestedness: 
smoothness often holds for the payoff on the lower dimensional space $Y$ even when it fails for the
optimal map and payoff on the higher dimensional space $X$. 
\end{example}

For the ball $X$,  Examples \ref{E:segment} and \ref{E:circle} represent limits of a continuum of examples
consisting of circular arcs $Y$ of  angle $|\theta|<\theta_0 $ and radius $1/\theta_0$.  Having less symmetry, they are not explicitly solvable, but it is natural to expect they remain nested for $\theta_0$ less than some critical value 
$\theta_c \in (0,\pi)$,  and become un-nested otherwise.  For $\theta_0>\theta_c$ it is not at all 
obvious how the singularities in $\f$ may be located  (though they are characterized implicitly
through duality).  Note that the analysis of Example \ref{E:circle} extends equally
well to the case where $X$ is a ball or spherical shell $\{ 0< r<|x|<R\}$ instead of a punctured ball;
in the latter case $\f$ will be smooth.

One last example illustrates the transition from nestedness to non-nestedness.
In this example we also see the continuity of the map $\f:X \longrightarrow Y$ 
shown in Theorem \ref{T:nested} need not extend to the closed set $\oX$.

\begin{example}[From pie slice to circular arc]
Fix $0<\theta_0<\pi$ and $r_0>0$.
Let $\mu$ be uniform on the pie-shaped region 
$X \subset \R^2$ described by $r<r_0$ and $|\theta|< \theta_0$ in polar coordinates.
Let $\nu$ be uniform over the circular arc $r=1$ and $|\theta|<\theta_0$.
The optimal map $\f$ and potentials $u$ and $v$ are the same as in Example \eqref{E:circle},
by the same arguments (or by restriction).
However 
this model is nested if and only if $\theta_0\le \pi/2$;
the {\em if} implication is shown using Corollary \ref{C:unique splitting},
and the {\em only if} by the logic of Example \ref{E:circle}.
\end{example}

\section{Regularity of husband's payoff}

Regularity of the map $\f$ and payoff functions $u$ and $v$ is 
a notoriously delicate question which has recieved considerable attention in case $m=n>1$, 
reviewed in~\cite{DePhilippisFigalli14} and \cite{Mc14}.
Priori to the work of Ma, Trudinger and Wang~\cite{mtw}, 
Villani \cite{V} had described it 
as {\em ``Without any doubt,  the main open problem in the field''}.
When $m>n$, very little is known~\cite{P2}.  For $m>n=1$,  Theorem \ref{T:nested} 
shows nestedness implies continuity of $F$ on the domain interior $X=X^0$; Theorem \ref{T:non-nested} 
combines with Remark \ref{R:transversality} to give conditions guaranteeing $v \in C^2_{loc}(Y^0)$;  
Example \ref{E:segment} shows we  cannot expect $k=dv/dy$ to have a H\"older exponent larger
than $\frac{2}{m+1}$ at the endpoints of $Y$.

The present section is devoted to the following theorem,  which shows that higher regularity of
the husband's payoff $v$ 
can be inferred from that of $(s,\mu,\nu)$ and $\p X$ under the same boundedness and transversality
conditions required by Theorem \ref{T:non-nested} and Remark \ref{R:transversality}.
The strategy of the proof is to iteratively combine our hypotheses with geometric measure theoretic 
level-set techniques to deduce sufficient smoothness of the function $h(y,k)$ from \eqref{h again} 
whose level sets implicitly define $k=dv/dy$.  
The result then follows from the implicit function theorem as in Theorem \ref{T:non-nested}.
We exercise care to localize the hypotheses in $(x,y)$ where possible.
We discuss smoothness of the map $\f$ and wives' payoffs $u$ in a subsequent section.


\begin{theorem}[Regularity of the husband's payoff]
\label{T:h regular}
Fix an integer $r \ge 1$. 
Under the hypotheses of Theorems \ref{T:nested}(b)
and \ref{T:non-nested}(b), suppose
there is an interval $Y'=(y_0,y_1) \subset Y$  
such that ${X'} \cap \partial X \in C^1$ intersects $\overline {X(y,k(y))}$ transversally
for all $y \in \overline{Y'}$, where $X' = \bigcup_{y\in {Y'}} \overline{X(y,k(y))}$.
Then $\|k\|_{C^{r,1}(Y')}$ is controlled by the following quantities, all assumed 
positive and finite: $\| \log f \|_{C^{r-1,1}(X')}$, $\| \log g\|_{C^{r-1,1}(Y')}$, $\|s_y\|_{C^{r,1}(X' \times Y')}$, 
 $\|\hat n_X\|_{(C^{r-2,1}\cap W^{1,1})(X' \cap \p X)}$,
 ${\mathcal H}^{m-1}[\partial^* X]$,
\begin{eqnarray}
\label{modulus of level set area}
\inf_{y \in Y'} {\mathcal H}^{m-1}\left[X(y,k(y))\right] &\quad& \mbox{\rm (proximity to ends of $Y$)},
\\ \label{modulus of n-d}
\inf_{x \in X', y \in Y'} |D_x s_y(x,y)| && \mbox{\rm (non-degeneracy)},
\\ \label{modulus of transversality}
\inf_{x \in X' \cap \partial X, y\in Y'} 1- (\hat n_X \cdot \hat n_{X_=})^2
&& \mbox{\rm (transversality)}
\end{eqnarray}
where $\hat n_{X_=}(x,y)=D_x s_y/|D_x s_y|$,
and ${\mathcal H}^{m-2} \left[\overline{X(y_0,k(y_0))} \cap \p X \right]$.
\end{theorem}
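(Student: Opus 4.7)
The plan is to iterate the implicit function theorem argument already used in Theorem~\ref{T:non-nested}(b). Since $k(y)$ is defined by $h(y,k(y))=0$ with $h$ from \eqref{h again}, and since the denominator $h_k(y,k(y))=\int_{X(y,k(y))} f/|D_xs_y|\,d{\mathcal H}^{m-1}$ is bounded below by a positive constant depending only on the listed quantities (namely \eqref{modulus of level set area}, \eqref{modulus of n-d} and $\inf \log f$), it suffices to prove that $h\in C^{r,1}$ in a neighborhood of the graph $\{(y,k(y))\mid y\in Y'\}$, with $C^{r,1}$-norm controlled by the listed constants. Then $k=-h_y/h_k$ will inherit the desired regularity by the chain/quotient rules, and an induction on $r$ supplies Lipschitz bounds on all derivatives of $k$ up to order $r$.

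To establish the $C^{r,1}$ regularity of $h$, I would proceed by induction on the number of derivatives and use the level-set calculus of Lemma~\ref{L:level set motion} repeatedly. At each differentiation step in $y$ or $k$, two kinds of terms appear: \emph{bulk} terms, in which the derivative falls on the integrand $f$ or $f\,s_{yy}/|D_xs_y|$ and one still integrates over $X_\le(y,k)$; and \emph{surface} terms, produced by the motion of the boundary of $X_\le(y,k)$ at normal velocities $1/|D_xs_y|$ (when $k$ changes) or $-s_{yy}/|D_xs_y|$ (when $y$ changes). The bulk terms are handled by the inductive hypothesis plus the regularity of $\log f$, $\log g$ and $s_y$. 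The surface terms are integrals over the hypersurface $X(y,k)$ whose integrands are rational functions of derivatives of $s_y$, and to differentiate them again one applies the generalized Gauss--Green formula \eqref{Gauss-Green} from Lemma~\ref{L:level set motion} to rewrite them as bulk integrals on $X_\le(y,k)$ plus boundary integrals on $\partial^{*}X\cap\overline{X_\le(y,k)}$.

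This reduces the problem to controlling, order by order, integrals over the moving codimension-one piece $\partial X\cap\overline{X_\le(y,k)}$ of $\partial X$, whose boundary inside $\partial X$ is $\partial X\cap X(y,k)$. Differentiating these in $y$ or $k$ produces codimension-two surface integrals on $\partial X\cap X(y,k)$. Here the transversality hypothesis \eqref{modulus of transversality} is essential: combined with $\partial X\in C^{r-1,1}\cap W^{1,1}$, the Clarke/implicit function theorem (as in Remark~\ref{R:transversality}) realises $\partial X\cap X(y,k)$ as a $C^{r-1,1}$ submanifold of $\partial X$ whose parameterisation, normal and area have norms controlled by the listed constants; simultaneously $X(y,k)$ is realised as a $C^{r,1}$ hypersurface of $X$ via the Clarke implicit function theorem applied to $s_y$, using \eqref{modulus of n-d} and $\|s_y\|_{C^{r,1}}$. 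The initial value ${\mathcal H}^{m-2}[\overline{X(y_0,k(y_0))}\cap\partial X]$ enters when integrating these codimension-two areas along $Y'$, via a co-area argument parallel to the one used for ${\mathcal H}^{m-1}[X(y,k(y))]$ in Lemma~\ref{L:level set motion}.

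The hard part, I expect, is the bookkeeping for Step 3: organising the finite sum of bulk, codimension-one and codimension-two surface terms that appear after $r+1$ differentiations so that each piece is controlled by exactly the quantities listed, without losing derivatives. In particular one has to verify that no derivative of $\partial X$ of order higher than $r-1$ is needed, that $\log f$ and $\log g$ only appear with $r-1$ derivatives, and that $s_y$ appears with no more than $r+1$ derivatives; this is the content of the specific H\"older/Sobolev indices in the hypotheses. The denominators $|D_xs_y|$ and $1-(\hat n_X\cdot\hat n_{X_=})^2$ coming from the implicit function theorem at the two levels of geometry are what the non-degeneracy and transversality moduli \eqref{modulus of n-d}--\eqref{modulus of transversality} are designed to bound, so once the scheme is set up cleanly these denominators cause no trouble. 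The Lipschitz (rather than merely continuous) character of the top-order derivative of $k$ then follows from the final application of the Clarke implicit function theorem, exactly as in the proof of Theorem~\ref{T:non-nested}(a).
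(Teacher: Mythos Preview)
Your strategy is correct and matches the paper's: show $h\in C^{r,1}$ near the graph of $k$ by iteratively differentiating the level-set integrals, then invoke the implicit function theorem with the lower bound on $h_k$ coming from \eqref{modulus of level set area}, \eqref{modulus of n-d}, and $\inf\log f$.

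The paper organizes the iteration a bit differently than you sketch, and the difference is worth noting because it is exactly what tames the ``bookkeeping'' you flag as the hard part. Rather than shuttling between bulk integrals over $X_\le(y,k)$ and surface integrals via Gauss--Green at each step, the paper observes that after the \emph{first} derivative, $h_\pm$ is already a pure flux integral $\int_{X(y,k)} a\cdot\hat n_{X_=}\,d{\mathcal H}^{m-1}$ with $a=fV^\pm$ (no bulk term survives since $f$ is $y$-independent). It then proves a lemma (Lemma~\ref{L:differentiating fluxes}) computing the $(y,k)$-derivatives of such a flux directly: the result is another codimension-1 flux plus a codimension-2 flux over $\overline{X(y,k)}\cap\partial X$, and the same lemma applied on the manifold $\partial X$ handles the codimension-2 piece. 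This is packaged (Corollary~\ref{C:iterated derivatives}) as a linear operator $A_\pm$ acting on pairs $(a,b)\in C^{i,1}(X')\oplus C^{i,1}(X'\cap\partial X;T\partial X)$ with $A_\pm:B_i\to B_{i-1}$ bounded by the listed constants; the $r$-fold iterate $A_-^{j-i}A_+^i(fV^\pm,0)$ then gives the integrands of $\partial^j h_\pm/\partial k^{j-i}\partial y^i$ explicitly, and the derivative count becomes transparent. Your Gauss--Green conversion is precisely how that lemma is \emph{proved}, but keeping the iteration at the level of fluxes (codimension 1 and 2 only, never returning to bulk) avoids the proliferation of term-types you anticipate. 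The control on ${\mathcal H}^{m-1}[X(y,k(y))]$ and ${\mathcal H}^{m-2}[\overline{X(y,k(y))}\cap\partial X]$ needed to bound these fluxes is supplied separately (Lemma~\ref{L:sizes}) via Gauss--Green on $X$ and on $\partial X$, which is where ${\mathcal H}^{m-1}[\partial^*X]$ and the initial codimension-2 area enter.
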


To establish this theorem,  we study the motion of the interior $X_\le(y,k)$  and boundary 
$X_\le(y,k) \cap \p X$ level sets of $s_y$ with respect to changes in $y$ and in $k$.  The normal velocities
for this motion are given by
\begin{equation}\label{normal velocity}
V^\pm(x,y) := \mp \frac{(s_{yy})^{(1\pm 1)/2}}{|D_x s_y|} \hat n_{X_=}, \qquad  \hat n_{X_=} (x,y):= \frac{D_x s_y}{|D_x s_y|},
\end{equation}
and
\begin{equation}\label{boundary velocity}
V^\pm_{\partial X}(x,y) := \frac{V^\pm \cdot \hat n_{X_=}}{\sqrt{1-(\hat n_X \cdot \hat n_{X_=})^2}} \hat n_\partial, \quad \hat n_\partial(x,y)  :=\frac{\hat n_{X_=} - (\hat n_{X_=} \cdot \hat n_X) \hat n_X}{\sqrt{1 - (\hat n_X \cdot \hat n_{X_=})^2}},
\end{equation}
respectively.  We shall also employ the divergence operators $\nabla_X$ on $X$ and $\nabla_{\p X}$ on $\p X$.

We remark the outward normal velocity of $X_\le \cap \partial X$ is given by $V^\pm_{\p X}$,
since $\hat n_\partial$ is the outward unit normal to $X_\le \cap \partial X$ in $\partial X$, and
$V^\pm$ coincides with the projection of $V^\pm_{\partial X}$ onto $\hat n_{X_=}$. As it will play an important role in what follows, let us also
remark on the smoothness of the domain $X$ and its boundary divergence operator.  
Any $\partial X \in C^1$
 can be parameterized locally as a graph of a function $w \in C^1(\R^{m-1})$;
in this parameterization its metric tensor takes the form $g = I + Dw \otimes Dw$,
and $\nabla_{\partial X} \cdot W = \partial_i W^i + \Gamma^{i}_{i\ell} W^\ell$,  where the Christoffel symbol 
$\Gamma^i_{i\ell}$ involves first derivatives of $g$.  Thus we need $\hat n_X \in W^{1,1}(\p X)$ 
to define $\nabla_{\partial X}$,
and if $\partial X \in C^{r-1,1}$ then $\nabla_{\partial X} \cdot W \in C^{r-3,1}$ provided the vector field $W$ is smooth enough (say $W \in C^{r-2,1}$), with $C^{-1,1} = L^\infty$ conventionally.  

Our first lemma shows the size of the indifference sets ${X(y,k(y))}$ and their boundaries
are controlled uniformly on $(y_0,y_1)$ by the constants listed in the theorem.

\begin{lemma}[Size of indifference sets]\label{L:sizes}
Under the hypotheses of Theorem \ref{T:non-nested}(a), 
the area $A(y) = {\mathcal H}^{m-1}[X(y,k(y))]$ 
of the indifference set is controlled by
\begin{equation}\label{level set upper bound global}
A(y) 
\le
\frac{\|s_y\|_{W^{1,1}(X)}}{ \displaystyle \inf_{x  \in X} |D_x s_y(x,y)|} + {\mathcal H}^{m-1}[\p^* X].
\end{equation}
Under the hypotheses of Theorem \ref{T:h regular},  it  satisfies
\begin{equation}\label{level set upper bound local}
\sup_{y \in Y'} A(y)
\le
\frac{\|  D_xs_y \|_{C^{0,1}(X' \times Y')} }{\displaystyle \inf_{(x,y)  \in X' \times Y'} |D_x s_y|} {\mathcal H}^m(X') + {\mathcal H}^{m-1}(X' \cap \partial X)
+ \inf_{y \in Y'}A(y)
\end{equation}
while the area of its boundary $B(y):= {\mathcal H}^{m-2}\left[\overline{X(y,k(y))}\cap \p X\right]$ satisfies
\begin{equation}\label{modulus of boundary level set}
\sup_{y \in Y'} B(y) 
\le B(y_0) + \|\nabla_{\p X} \cdot \hat n_\p \|_{L^1(X' \cap \p X)},
\end{equation} 
hence both are controlled by the constants named in that theorem. 
\end{lemma}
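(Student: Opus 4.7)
The plan is to obtain all three estimates from the generalized Gauss-Green identity \eqref{Gauss-Green} established in Lemma~\ref{L:level set motion}, applied to suitable test vector fields. The common ingredient is that $\hat n_{X_=}(x,y) = D_x s_y(x,y)/|D_x s_y(x,y)|$ is the outward unit normal to $X_\le(y, k(y))$ along $X(y, k(y))$; hence the field $V(x) := D_x s_y(x,y)/|D_x s_y(x,y)|$ satisfies $V \cdot \hat n_{X_=} \equiv 1$ on $X(y, k(y))$ and $|V| \equiv 1$ on $X$.

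For the global bound \eqref{level set upper bound global}, apply \eqref{Gauss-Green} to $X_\le(y, k(y))$ with this $V$:
\begin{equation*}
A(y) = \int_{X_\le(y,k(y))} \nabla_X \cdot V \, d{\mathcal H}^m - \int_{\p^* X \cap \overline{X_\le(y,k(y))}} V \cdot \hat n_X \, d{\mathcal H}^{m-1}.
\end{equation*}
The boundary integral is bounded by ${\mathcal H}^{m-1}[\p^* X]$ since $|V| = 1$. The bulk integrand expands as
$\nabla_X \cdot V = \Delta_x s_y/|D_x s_y| - (D_x s_y)^\top D^2_x s_y \, D_x s_y/|D_x s_y|^3$;
bounding each factor of $1/|D_x s_y|$ by $1/\inf|D_x s_y|$ and integrating over $X$ yields the $\|s_y\|_{W^{1,1}(X)}/\inf|D_x s_y|$ summand of \eqref{level set upper bound global}.

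For the localized bound \eqref{level set upper bound local}, apply \eqref{Gauss-Green} at two values $y, y_0 \in Y'$ using the same $V$ normalized at the current $y$, and subtract. By the nestedness from Theorem~\ref{T:nested}(b), one of $X_\le(y, k(y))$ and $X_\le(y_0, k(y_0))$ contains the other, so subtraction gives
\begin{equation*}
A(y) = \int_{X(y_0, k(y_0))} V \cdot \hat n_{X_=}^{(y_0)} \, d{\mathcal H}^{m-1} \pm \int_R \nabla_X \cdot V \, d{\mathcal H}^m \mp \int_{\p^* X \cap R} V \cdot \hat n_X \, d{\mathcal H}^{m-1},
\end{equation*}
where $R$ is the relatively open region between the two sub-level sets. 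The first integral is bounded above by $A(y_0)$ since $|V| = 1$; the bulk term, since $V$ is Lipschitz in $x$ uniformly in $y \in Y'$, is controlled by $\|D_x s_y\|_{C^{0,1}(X' \times Y')}/\inf|D_x s_y| \cdot {\mathcal H}^m(X')$; and the $\p^* X$ contribution by ${\mathcal H}^{m-1}(X' \cap \p X)$. Taking the supremum over $y$ and infimum over $y_0$ in $Y'$ produces \eqref{level set upper bound local}.

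For the perimeter bound \eqref{modulus of boundary level set}, the same annular argument transports to the closed Riemannian manifold $\p X$ (in place of $X$), with the vector field $\hat n_\p$ from \eqref{boundary velocity} playing the role of $V$. Under the transversality hypothesis \eqref{modulus of transversality}, $\hat n_\p$ is the outward unit co-normal to $X_\le \cap \p X$ within $\p X$ and carries the required Sobolev regularity; because $\p X$ has no ambient boundary in $\R^m$, no spurious $(m-2)$-dimensional terms appear, and the Gauss-Green output directly yields $B(y) - B(y_0) \le \|\nabla_{\p X} \cdot \hat n_\p\|_{L^1(X' \cap \p X)}$. The principal obstacle is ensuring that $\nabla_X \cdot V$ and $\nabla_{\p X} \cdot \hat n_\p$ are well-defined and lie in $L^1$ with constants depending only on the listed quantities, which reduces to the uniform lower bounds \eqref{modulus of n-d} and \eqref{modulus of transversality} keeping the denominators in these divergence expressions bounded away from zero.
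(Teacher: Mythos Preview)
Your proposal is correct and follows essentially the same route as the paper: apply the generalized Gauss--Green identity \eqref{Gauss-Green} with the unit vector field $\hat n_{X_=}$ on $X_\le(y,k(y))$ for \eqref{level set upper bound global}, on the annular region between two nested sublevel sets for \eqref{level set upper bound local}, and with $\hat n_\p$ on the boundaryless manifold $\p X$ for \eqref{modulus of boundary level set}. The only cosmetic differences are that the paper fixes $z\in\overline{Y'}$ attaining the infimum in \eqref{modulus of level set area} rather than infimizing at the end, and it records in passing that $A$ and $B$ depend continuously on $y$ via Lemma~\ref{L:level set motion} and Theorem~\ref{T:non-nested}.
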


\begin{proof}
Let  $X_\le^y :=\overline{X_\le(y,k(y))}$ and $X_=^y := \overline{X(y,k(y))}$. We claim bounds on
\begin{eqnarray}
A(y) &:=&\int_{X_=^y} ( \hat n_{X_=}(x,y) \cdot \hat n_{X_=}(x,y)) d{\mathcal H}^{m-1}(x) \qquad {\rm and}
\\ B(y) &:=&\int_{X_=^y\cap\partial X}  (\hat n_{\partial} (x,y)\cdot \hat n_{\partial}(x,y)) d{\mathcal H}^{m-2}(x).
\end{eqnarray}
Note both depend continuously on $y \in Y$ by Lemma \ref{L:level set motion} 
and Theorem \ref{T:non-nested}.

The generalized Gauss-Green theorem \eqref{Gauss-Green} yields
$$
A(y) = \int_{X_\le^y} (\nabla_X \cdot \hat n_{X_=} 
) d{\mathcal H}^m
 - \int_{X_\le^y \cap \p^* X} (\hat n_{X_=} 
\cdot \hat n_X) d{\mathcal H}^{m-1}
$$
from which \eqref{level set upper bound global} follows immediately.  Now suppose the infimum 
\eqref{modulus of level set area} is attained at $z \in \overline{Y'}$.  If $z \le y$ 
the Gauss-Green theorem yields
\begin{eqnarray*}
A(y) &=& \int_{X_\le^y \setminus X_\le^{z} } (\nabla_X \cdot \hat n_{X_=} 
) d{\mathcal H}^m
 - \int_{[X_\le^y \setminus X_\le^z] \cap \partial X} (\hat n_{X_=} 
\cdot \hat n_X) d{\mathcal H}^{m-1}
\\ && + \int_{X_=^z} \hat n_{X_=}(x,y) \cdot \hat n_{X_=}(x,z) d{\mathcal H}^{m-1}(x),
\end{eqnarray*}
where the nestedness $X^z_\le \subset X^y_\le$ assumed in Theorem \ref{T:h regular} has been used.
If $z>y$ a similar formula holds,  with the roles of $y$ and $z$ interchanged.
In either case \eqref{level set upper bound local} follows.

Similarly, the  Riemannian version \eqref{Gauss-Green}
of generalized Gauss-Green theorem asserts
$$
B(y) = \int_{[X_\le^y  \setminus X_\le^{y_0}]\cap\partial X} (\nabla_{\partial X} \cdot \hat n_{\p}) d{\mathcal H}^{m-1} 
+ \int_{X_=^{y_0}  \cap \p X} \hat n_\p(x,y) \cdot \hat n_\p(x,y_0) d{\mathcal H}^{m-2},
$$
which implies \eqref{modulus of boundary level set}.  It is easy to see the quantities appearing in
\eqref{level set upper bound local}--\eqref{modulus of boundary level set} are controlled by those
listed in Theorem \ref{T:h regular}.  
%
\end{proof}


\begin{remark}
If $\hat n_X \in C^{0,1}(\p X)$,  a similar argument using a Kirszbraun extension to $\p X$ 
can be used to bound $B(y)$ independently of $B(y_0)$.
\end{remark}

We next adapt Lemma \ref{L:level set motion} to the differentiation of boundary fluxes.

\begin{lemma}[Derivative of a flux through a moving boundary] 
\label{L:differentiating fluxes}    \ \\
Suppose $X' \subset X \subset \R^m$, $Y' \subset Y\subset \R$, $(s,f,g)$ and $k$ 
satisfy the hypotheses of Theorem \ref{T:h regular} for some $r \ge 1$.
Choose a neighbourhood $U\subset Y \times \R$ such that $X(y,k) \subset X'$ 
for all $(y,k) \in U$.
If $a:X' \times Y' \longrightarrow \R^m$ 
is Lipschitz, then 
\begin{equation}\label{flux}
\Phi(y,k) := 
\int_{X(y,k)} a(x,y) \cdot \hat n_{X_=}(x,y) d{\mathcal H}^{m-1} (x) 
\end{equation}
is Lipschitz on $U$, with partial derivatives $(\Phi^+,\Phi^-) := (\frac{\p \Phi}{\p y},\frac{\p \Phi}{\p k})$ given
a.e.\ by
\begin{eqnarray}\label{derived flux}
\Phi^\pm(y,k) &=& \int_{X(y,k)} [(\nabla_X \cdot a)V^\pm + \frac{1\pm1}{2}\frac{\p a}{\p y}] \cdot \hat n_{X_=} d{\mathcal H}^{m-1} 
\\ &&- \int_{\overline{X(x,y)} \cap \p X}  (a \cdot \hat n_X) (V^\pm_{\p X} \cdot \hat n_\p)d{\mathcal H}^{m-2}.
\nonumber
\end{eqnarray}
Similarly, if $r \ge 2$ and $b:(X' \cap \p X) \times Y' \longrightarrow T\p X$ 
denotes a jointly Lipschitz family of sections of the tangent bundle of $X'\cap \p X$, 
then 
\begin{equation}\label{boundary flux}
\Psi(y,k) := 
\int_{X(y,k)\cap \p X} b(x,y) \cdot \hat n_{\p}(x,y) d{\mathcal H}^{m-2} (x) 
\end{equation}
is Lipschitz on $U$, with partial derivatives $(\Psi^+,\Psi^-) := (\frac{\p \Psi}{\p y},\frac{\p \Psi}{\p k})$ given
a.e.\ by
\begin{eqnarray}\label{derived boundary flux}
\Psi^\pm(y,k) &=& \int_{\overline{X(y,k)} \cap \p X} [(\nabla_{\p X} \cdot b)V_{\p X}^\pm + \frac{1\pm1}{2}\frac{\p b}{\p y}] \cdot \hat n_{\p} \ d{\mathcal H}^{m-2}. 
\end{eqnarray}
\end{lemma}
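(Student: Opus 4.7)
The plan is to reduce both identities to consequences of Lemma \ref{L:level set motion} (and its Riemannian version with $M=\p X$) via two applications of the generalized Gauss--Green theorem \eqref{Gauss-Green}. The first Gauss--Green turns each surface integral into an integral over a sublevel set whose $(y,k)$-dependence is explicit; Lemma \ref{L:level set motion} then differentiates that sublevel-set integral; and a second Gauss--Green recognizes the integrand-derivative contribution as the surface integral appearing in \eqref{derived flux}.

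For $\Phi$, the initial Gauss--Green step reads
\begin{equation*}
\Phi(y,k) = \int_{X_\le(y,k)} (\nabla_X\cdot a)(x,y)\,d{\mathcal H}^m(x) - \int_{\p^* X \cap \overline{X_\le(y,k)}} (a\cdot \hat n_X)(x,y)\,d{\mathcal H}^{m-1}(x).
\end{equation*}
Applying Lemma \ref{L:level set motion} to the volume summand, with $f = \nabla_X\cdot a \in L^\infty$ and $f_y = \nabla_X\cdot(\p a/\p y) \in L^\infty$, contributes $\int_{X(y,k)}(\nabla_X\cdot a)\, V^\pm \cdot \hat n_{X_=}\, d{\mathcal H}^{m-1}$ from motion of $\p X_\le(y,k)$ together with $\tfrac{1\pm 1}{2}\int_{X_\le(y,k)} \nabla_X\cdot(\p a/\p y)\,d{\mathcal H}^m$ from the $y$-derivative of the integrand. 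The boundary summand is handled by the Riemannian version of Lemma \ref{L:level set motion} on $M = \p X$: the tangential gradient of $s_y|_{\p X}$ has norm $|D_xs_y|\sqrt{1-(\hat n_X\cdot \hat n_{X_=})^2}$, which the transversality hypothesis \eqref{modulus of transversality} keeps bounded away from zero on $X'\cap \p X$, so the lemma produces $\int_{\overline{X(y,k)}\cap \p X} (a\cdot \hat n_X)(V^\pm_{\p X}\cdot \hat n_\p)\,d{\mathcal H}^{m-2}$ together with a residual $\tfrac{1\pm1}{2}\int_{\p^* X \cap X_\le(y,k)} (\p a/\p y)\cdot \hat n_X\,d{\mathcal H}^{m-1}$. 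Reversing Gauss--Green on the two $\p a/\p y$ volume pieces collapses them to $\tfrac{1\pm1}{2}\int_{X(y,k)} (\p a/\p y)\cdot \hat n_{X_=}\, d{\mathcal H}^{m-1}$, and assembling everything recovers \eqref{derived flux}. The Lipschitz regularity of $\Phi$ on $U$ comes from the Lipschitz quantifications in both invocations of Lemma \ref{L:level set motion}, combined with the uniform area estimates \eqref{level set upper bound local}--\eqref{modulus of boundary level set} of Lemma \ref{L:sizes}.

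The argument for $\Psi$ runs entirely inside $\p X$, a manifold without boundary. One application of Gauss--Green on $\p X$ gives
\begin{equation*}
\Psi(y,k) = \int_{X_\le(y,k)\cap \p X} \nabla_{\p X}\cdot b(x,y)\,d{\mathcal H}^{m-1}(x),
\end{equation*}
to which the Riemannian version of Lemma \ref{L:level set motion} with normal velocity $V^\pm_{\p X}$ applies directly, and a second Gauss--Green inside $\p X$ rewrites the resulting $\p b/\p y$ volume integral as $\tfrac{1\pm1}{2}\int_{\overline{X(y,k)}\cap \p X}(\p b/\p y)\cdot \hat n_\p\,d{\mathcal H}^{m-2}$, producing \eqref{derived boundary flux}. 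The hypothesis $r\ge 2$ ensures $\p X \in C^{r-1,1}$ is at least $C^{1,1}$, so that the tangential divergence $\nabla_{\p X}\cdot b$ is a well-defined $L^\infty$ object on $\p X$.

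The main obstacle is that Lipschitz $a$ gives $\nabla_X\cdot a$ and its $y$-derivative only in $L^\infty$, not in $C^0$ --- precisely the setting for which Lemma \ref{L:level set motion} is proved by mollification in its closing paragraphs. I would proceed the same way here: approximate $a$ by smooth fields $a^\delta \in C^\infty(X'\times Y')$ for which all Gauss--Green manipulations above are classical and \eqref{derived flux} holds pointwise in $(y,k)$, then pass to the $\delta\to 0$ limit. The quantitative Lipschitz bounds of Lemma \ref{L:level set motion} are uniform in $\delta$, so the limiting $\Phi$ inherits them; pointwise a.e.\ convergence of $D\Phi^\delta$ on almost every level set is extracted from the co-area formula exactly as in the closing lines of Lemma \ref{L:level set motion}. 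The parallel mollification of $b$ on $X'\cap \p X$ completes the proof of \eqref{derived boundary flux}.
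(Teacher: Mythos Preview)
Your approach is correct and essentially the same as the paper's: both apply Gauss--Green to convert the surface flux to a sublevel-set volume integral plus a $\p X$ boundary piece, invoke Lemma \ref{L:level set motion} (Euclidean and Riemannian) to differentiate each, reassemble the $\p a/\p y$ terms via a second Gauss--Green, and handle the Lipschitz-only regularity of $a$ by mollification with uniform Lipschitz bounds. The only presentational differences are that the paper carries out the mollification $a_\epsilon \to a$ from the outset rather than at the end, and integrates over the relative region $X_\le(y,k)\setminus X_\le(y_0,k(y_0))$ so that the domain stays inside $X'$ where $a$ is defined---a localization detail you should make explicit, since your Gauss--Green as written integrates $\nabla_X\cdot a$ over all of $X_\le(y,k)$.
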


\begin{proof}
To see that the proof of \eqref{derived boundary flux} is completely analogous to the proof of \eqref{derived flux},
let us argue the latter for $\R^m$ replaced by a complete 
Riemannian manifold $M$ with $g_{ij} \in C \cap W^{1,1}$, such as $M=\p X$.  
Note $\p X \in C^{1,1}$ if $r\ge 2$;
its lack of boundary accounts for the comparative simplicity of \eqref{derived boundary flux}
relative to \eqref{derived flux}.

Choose a $C^{1,1}$ smooth family 
$a_\epsilon:X' \times Y' \longrightarrow TX'$ of sections of the tangent bundle
converging to $a$ in Lipschitz norm,
so that both $\nabla_X \cdot a_\epsilon$ and $\frac{\p}{\p y} (\nabla_X \cdot a_\epsilon) \in L^\infty$.
Define $\Phi_\epsilon$ by \eqref{flux} with $a$ replaced by $a_\epsilon$.
Let $Y'=(y_0,y_1)$,  so that $X \cap \p X'= X(y_0,k(y_0)) \cup X(y_1,k(y_1))$.
The generalized Gauss-Green theorem \eqref{Gauss-Green} theorem yields
\begin{eqnarray*}
\Phi_\epsilon(y,k) &=& \int_{X_\le(y,k) \setminus X_\le(y_0,k(y_0))} 
\nabla_X \cdot a_\epsilon d{\mathcal H}^m  
\\&&-\int_{[X_\le(y,k)\setminus X_\le(y_0,k(y_0))] \cap \p X} 
a_\epsilon \cdot \hat n_{X} d{\mathcal H}^{m-1}
\\&&+ \int_{X(y_0,k(y_0))} a_\epsilon(x,y) \cdot \hat n_{X_=}(x,y_0) d{\mathcal H}^{m-1}(x)
\end{eqnarray*}
Lemma \ref{L:level set motion} then asserts $\Phi_\epsilon$ is Lipschitz on $U$, with
\begin{eqnarray*}
\Phi_\epsilon^+
&=& \int_{X(y,k)} (\nabla_X \cdot a_\epsilon) V^+ \cdot \hat n_{X_=} d{\mathcal H}^{m-1} 
+\int_{X_\le(y,k) \setminus X_\le(y_0,k(y_0))} \nabla_X \cdot \frac{\p a_\epsilon}{\p y} d{\mathcal H}^m
\\ && 
-  \int_{\overline{X(x,y)} \cap \p X}  (a_\epsilon \cdot \hat n_X) (V^+_{\p X} \cdot \hat n_\p)d{\mathcal H}^{m-2}
-  
\int_{[X_\le(y,k) \setminus X_\le(y_0,k(y_0))]\cap \p X} \hat n_X \cdot \frac{\p a_\epsilon}{\p y} d{\mathcal H}^m
\\ &&+ \int_{X(y_0,k(y_0))} \frac{\p a_\epsilon}{\p y}(x,y) \cdot \hat n_{X_=}(x,y_0) d{\mathcal H}^{m-1}(x)
\\ &=& \int_{X(y,k)} [(\nabla_X \cdot a_\epsilon) V^+ + 
\frac{\p a_\epsilon}{\p y} ] \cdot \hat n_{X_=} d{\mathcal H}^{m-1}
 -  \int_{\overline{X(x,y)} \cap \p X}  (a_\epsilon \cdot \hat n_X) (V^+_{\p X} \cdot \hat n_\p)d{\mathcal H}^{m-2}
\end{eqnarray*}
a.e.; the generalized Gauss-Green identity has again been used.  
A similar formula holds for $\Phi_\epsilon^-$ with $(V^+,V^+_{\p X}, \frac{\p}{\p y})$ replaced by
$(V^-,V^-_{\p X}, \frac{\p}{\p k})$.  In particular,
$\|\Phi_\epsilon\|_{C^{0,1}(Y \times \R)}$ can be bounded independently of
$\epsilon$ by $\|w\|_{C^{0,1}}$, 
$\|(V^\pm,V^\pm_{\p X})\|_{C^0}$, \eqref{level set upper bound} and \eqref{modulus of boundary level set}.
Since $\|a_\epsilon - a \|_{C^{0,1}(X \times Y; TX)} \to 0$ we can pass to the 
$\epsilon \to 0$ limit to bound $\|\Phi\|_{C^{0,1}(Y \times \R)}$ using the same quantities,
and obtain \eqref{derived flux} from the dominated convergence theorem.

When $r\ge 2$,  approximating $b$ by $b_\epsilon$ analogously, the same reasoning yields $\Psi_\epsilon$
Lipschitz on $U$ with
\begin{eqnarray*}
\Psi_\epsilon^\pm(y,k)
&=&
 \int_{\overline{X(y,k)}\cap \p X} [(\nabla_{\p X} \cdot b_\epsilon) V_{\p X}^\pm + \frac{1\pm1}{2} 
\frac{\p b_\epsilon}{\p y} ] \cdot \hat n_{\p} \ d{\mathcal H}^{m-2}
\end{eqnarray*}
a.e.  
Since $\|b_\epsilon - b \|_{C^{0,1}([X' \cap \p X] \times Y'; T\p X)} \to 0$,
we see $\|\Psi_\epsilon\|_{C^{0,1}(U)}$ is controlled as before by $\|z\|_{C^{0,1}}$, 
$\|V^\pm_{\p X}\|_{C^0}$ and \eqref{modulus of boundary level set},  permitting passage
to the $\epsilon \to 0$ limit.
\end{proof}

\begin{corollary}[Iterated differentiation of fluxes]\label{C:iterated derivatives}
Under the hypotheses and notation of Lemma \ref{L:differentiating fluxes},
if $r \ge 2$ or $b=0$ then
$$
(\Phi+\Psi)^\pm =
\int_{X(y,k)} a^\pm \cdot \hat n_{X_=} d{\mathcal H}^{m-1} + 
\int_{\overline{X(y,k)} \cap \p X} b^\pm \cdot \hat n_{\p} d{\mathcal H}^{m-2} 
$$
a.e.\ on $U$, where $\left({a^\pm \atop b^\pm}\right) =  A_\pm\left({a \atop b}\right)$
are given by the first-order differential operators
$$
 A_\pm
\left(
\begin{array}{c}
a \cr
b 
\end{array}
\right)
:= \left(
\begin{array}{c}
(\nabla_X \cdot a) V^\pm + \frac{1\pm 1}{2} \frac{\p a}{\p y} \phantom{-a}\cr
 (\nabla_{\p X} \cdot b -  \hat n_X \cdot a )  V^\pm_{\p X} + \frac{1\pm 1}{2}\frac{d b}{d y}  
\end{array}
\right).
$$
For each integer $0 \le i \le r-2$, the operator
$A_\pm:B_{i} \longrightarrow B_{i-1}$ gives a bounded linear transformation
from the Banach space 
$$
B_i:=C^{i,1}(X' \times Y'; \R^m) \oplus C^{i,1}([X' \cap \p X] \times Y';T\p X)
$$ 
of $C^{i,1}$ families of sections of the tangent bundle to $B_{i-1}$;
its norm 
$$
\|A_\pm\|_{B_i \to B_{i-1}} \le C_{m,i} (1+ \|V^\pm\|_{C^{i-1,1}(X' \times Y')} 
+ (1+ \|\hat n_X\|_{C^{i-1,1}(X' \cap \p X)} )
\|V^\pm_{\p X})\|_{C^{i-1,1}([X' \cap\p X] \times Y')}
)
$$
is controlled by $\|s_y\|_{C^{i,1} (X \times Y')}$, $\|\hat n_X\|_{C^{i-1,1}(\p X)}$, 
\eqref{modulus of n-d} and \eqref{modulus of transversality}.
Similarly,   
$$\|A_\pm\|_{C^{r-1,1}(X \times Y') \oplus \{0\} \to B_{r-2}} \le 
C_{m,r-1} (1+\|V^\pm\|_{C^{r-2,1}(X \times Y')} + \|V^\pm_{\p X} \otimes \hat n_X\|_{C^{r-2,1}(\p X \times Y')})
$$
is controlled by $\|s_y\|_{C^{r-1,1}(X \times Y')}$, $\|\hat n_X\|_{C^{r-2,1}(\p X)}$
and \eqref{modulus of n-d}.
\end{corollary}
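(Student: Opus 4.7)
The plan is to simply add the two formulas from Lemma~\ref{L:differentiating fluxes} and then read off the operator $A_\pm$ from the resulting combined integrands, before estimating its norm term by term using the algebra structure of $C^{i,1}$.

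\textbf{Combining the flux derivatives.} By \eqref{derived flux} and \eqref{derived boundary flux},
$(\Phi+\Psi)^\pm$ splits naturally into an $(m{-}1)$-dimensional interior integral over $X(y,k)$ and an $(m{-}2)$-dimensional integral over $\overline{X(y,k)}\cap \p X$. The interior integrand from $\Phi^\pm$ is already $a^\pm\cdot \hat n_{X_=}$ with $a^\pm=(\nabla_X\cdot a)V^\pm+\tfrac{1\pm 1}{2}\tfrac{\p a}{\p y}$, matching the top row of $A_\pm$; note $\Psi$ contributes nothing here. The boundary integrand is the sum of $-(a\cdot\hat n_X)(V^\pm_{\p X}\cdot\hat n_\p)$ from $\Phi^\pm$ and $[(\nabla_{\p X}\cdot b)V^\pm_{\p X}+\tfrac{1\pm 1}{2}\tfrac{\p b}{\p y}]\cdot\hat n_\p$ from $\Psi^\pm$. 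Since $V^\pm_{\p X}=(V^\pm_{\p X}\cdot\hat n_\p)\hat n_\p$ is parallel to $\hat n_\p$, the scalar factor $(\nabla_{\p X}\cdot b-\hat n_X\cdot a)$ can be pulled out to rewrite the sum as $b^\pm\cdot\hat n_\p$ with $b^\pm=(\nabla_{\p X}\cdot b-\hat n_X\cdot a)V^\pm_{\p X}+\tfrac{1\pm 1}{2}\tfrac{\p b}{\p y}$, matching the bottom row of $A_\pm$. When $b=0$ but $r=1$, only $\Phi^\pm$ is needed, so $\nabla_{\p X}\cdot b$ never appears and $r\ge 2$ is unnecessary.

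\textbf{Operator norm estimates.} From \eqref{normal velocity} and \eqref{boundary velocity}, the coefficients $V^\pm$, $\hat n_{X_=}$ are quotients of smooth derivatives of $s_y$ whose denominator $|D_xs_y|$ is bounded below by \eqref{modulus of n-d}; consequently $\|V^\pm\|_{C^{i-1,1}(X\times Y')}$ is controlled by $\|s_y\|_{C^{i,1}}$ and \eqref{modulus of n-d}. Similarly, $V^\pm_{\p X}$ and $\hat n_\p$ involve the additional factor $(1-(\hat n_X\cdot \hat n_{X_=})^2)^{-1/2}$, bounded using \eqref{modulus of transversality}, together with the restriction of $\hat n_X$ to $\p X$; hence $\|V^\pm_{\p X}\|_{C^{i-1,1}}$ is controlled by the named constants plus $\|\hat n_X\|_{C^{i-1,1}(\p X)}$. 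The operator $A_\pm$ acts via multiplication by such tensor fields and first-order differentiation; since $C^{i,1}$ is an algebra, multiplying a $C^{i-1,1}$ coefficient by a $C^{i-1,1}$ function (the derivative of a $C^{i,1}$ input) stays in $C^{i-1,1}$, so $A_\pm\colon B_i\to B_{i-1}$ with norm of the stated form. The tangential divergence $\nabla_{\p X}\cdot b$ requires Christoffel symbols of the induced metric, which are controlled by $\|\hat n_X\|_{C^{i-1,1}}$ (accounting for the boundary regularity loss in that factor).

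\textbf{Saving a derivative when $b=0$.} For the second bound, the bottom component of $A_\pm$ reduces to $-(\hat n_X\cdot a)V^\pm_{\p X}$, which is the contraction of $a$ against the tensor field $V^\pm_{\p X}\otimes \hat n_X$. Since no derivative of this tensor is taken, the estimate closes with $V^\pm_{\p X}\otimes \hat n_X\in C^{r-2,1}$; unpacking \eqref{boundary velocity}, this requires only $\hat n_X\in C^{r-2,1}$ and $\|s_y\|_{C^{r-1,1}}$, saving one full derivative on $\hat n_X$ over the first estimate. This is crucial for the bootstrap in Theorem~\ref{T:h regular}, where each application of $A_\pm$ consumes a derivative of the data.

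\textbf{Main obstacle.} The principal delicacy is the boundary bookkeeping: ensuring the Gauss--Green--type boundary contribution $-(a\cdot\hat n_X)(V^\pm_{\p X}\cdot\hat n_\p)$ emerging from $\Phi^\pm$ assembles cleanly with the tangential terms from $\Psi^\pm$ into the single differential operator $A_\pm$, and verifying that $\nabla_{\p X}$ can be invoked with precisely $\hat n_X\in W^{1,1}$ regularity (at the base of the induction $i=0$). The algebraic reorganization using $V^\pm_{\p X}\parallel\hat n_\p$ is what makes the combined integrand expressible as a single contraction against $\hat n_\p$, and the observation that the $\hat n_X$-derivative cost only arises through $\nabla_{\p X}\cdot b$ is what permits the saving in the $b=0$ bound.
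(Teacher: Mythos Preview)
Your proposal is correct and follows exactly the paper's approach: the paper's entire proof reads ``The corollary follows directly from Lemma~\ref{L:differentiating fluxes}. The norms of the first order differential operators $A_\pm$ are elementary to estimate.'' You have simply unpacked both sentences --- adding \eqref{derived flux} and \eqref{derived boundary flux}, using $V^\pm_{\p X}\parallel \hat n_\p$ to consolidate the boundary integrand, and then tracking the $C^{i,1}$ algebra estimates term by term --- which is precisely what the paper leaves implicit.
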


\begin{proof}The corollary follows directly from Lemma \ref{L:differentiating fluxes}.
 The norms of the 
first order differential operators $A_\pm$ are elementary to estimate.
\end{proof}
\medskip

Finally, we are ready to iterate derivatives using the preceding lemma and its corollary
to deduce Theorem \ref{T:h regular} from Theorem \ref{T:non-nested}. 
\medskip

\begin{proof}[Proof of theorem]
Theorem \ref{T:non-nested} asserts $h(y,k) := \mu[X_\le(y,k)] - \nu[(-\infty,y)]$
is continuously differentiable on the set $U$ of Lemma \ref{L:differentiating fluxes}.  Apart from the additive term $g(y)$,
 its partial derivatives $(h_+,h_-):=(g + \frac{\p h}{\p y},\frac{\p h}{\p k})$ 
are given in \eqref{grad h} as flux integrals
$$
h_\pm(y,k) = %
\int_{X(y,k)} f V^\pm \cdot \hat n_{X_=} d{\mathcal H}^{m-1}.
$$
over the indifference set $X(y,k)$.
Taking $a=fV^\pm \in C^{r-1,1}(X' \times Y'; \R^m)$ and $b=0$, 
Corollary \ref{C:iterated derivatives} allows us to compute and bound
$r$ derivatives of $h_\pm$ on $U$ iteratively.

Taking $0 \le i\le j \le r$, in the
notation of that corollary,
\begin{equation}\label{r derivatives}
\frac{\p^{j} h_\pm}{\p k^{j-i} \p y^i} = \int_{X(y,k)} a^j_i \cdot \hat n_{X_=} d{\mathcal H}^{m-1} +
\int_{\overline{X(y,k)} \cap \p X} b^j_i \cdot \hat n_{\p}\  d{\mathcal H}^{m-2} 
\end{equation}
where $\left( {a^j_{i} \atop b^j_{i}} \right) := A_-^{j-i} A_+^i \left( {f V^\pm \atop 0} \right) \in B_{r-j-1}$.
Furthermore, $\| \left( {a^{j}_i \atop b^{j}_i} \right) \|_{C^{r-1-j,1}}$ is controlled by 
quantities named in the theorem: $\|f\|_{C^{r-1,1}(X')}$, $\|s_y\|_{C^{r,1}(X' \times Y')}$, 
$\|\hat n_X\|_{C^{r-2,1}(X' \cap \p X)}$, \eqref{modulus of level set area}--\eqref{modulus of transversality},
${\mathcal H}^{m-1}[\p^* X]$ 
and ${\mathcal H}^{m-2}[\overline{X(y_0,k(y_0)} \cap \p X]$, in view of Lemma \ref{L:sizes}.
Since all but the final derivatives provided by the Corollary are continuous we may order them as convenient,
and discover $\|h\|_{C^{r,1}(U)}$ is controlled by the listed quantities.    
 
Now $k$ solves $h(y,k(y))=0$, so  the implicit function theorem
result provided by Theorem \ref{T:non-nested} shows $k$ 
inherits the same smoothness as $h$.  Using \eqref{grad h} to bound 
$h_k(y,k(y)) > 0$ away from zero by the product of
$\|D_x s_y /  f\|_{L^\infty}$ with \eqref{modulus of level set area},
we deduce $\|k\|_{C^{r,1}(Y')}$ is controlled by the quantities named in the theorem.
\end{proof}

\section{Regularity of optimal maps and potentials}
\label{S:regularity}

Having found conditions guaranteeing smoothness of the husband's payoff $v(y)$ 
in the preceding section,  we now turn to the wife's payoff $u(x)$ and the optimal correspondence
$F:X\longrightarrow Y$ between wives and husbands.  

Our main conclusions are as follows: Lipschitz continuity 
of $F$ is equivalent to a strong form of
nestedness,  which requires a lower bound on the local speed of the motion of $F^{-1}(y)$ with respect
to changes in $y \in Y$.
On regions $F^{-1}([a,b])$ where this 
holds, higher regularity of $\f$ and $u$ (up to the boundary) are inherited from interior regularity
of $v$ via the first order conditions
$$
(Du(x),Dv(F(x))) = (D_x s(x,F(x)), D_ys(x,F(x))
$$
from \eqref{foc}.

We begin with a logically independent proposition, which will then be combined with results of the preceding section
to harvest the desired results.

\begin{proposition}
[Optimal maps have locally bounded variation]
\label{P:BV}
The hypotheses of Theorem \ref{T:nested}(c) and \ref{T:non-nested}(a)
imply $u \in C^1(X)$, $\f \in (BV_{loc}\cap C)(X)$,
$D_x s_y (\cdot,\f(\cdot)) \in (BV_{loc}\cap C) (X)$, $\f \in \Dom Dk$ on a set of $|D\f|$ full measure,
and 
\begin{equation}\label{distributional map gradient} 
(k'(\f(x))-s_{yy}(x,\f(x)))D\f(x) = D_x s_y(x,\f(x)).
\end{equation}
\end{proposition}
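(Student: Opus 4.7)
The plan is to harvest the four conclusions in sequence, each drawing on tools from the theory of BV functions and the nestedness structure.

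First I would establish $u \in C^1(X)$ together with continuity of $D_xs_y(\cdot,\f(\cdot))$. Theorem \ref{T:non-nested}(a) gives $k=k^\pm \in C^{0,1}_{loc}(Y)$, so $v$ defined by $v'=k$ lies in $C^{1,1}_{loc}$ and $\Domo Dv=Y$. The hypothesis $\log g \in L^\infty_{loc}(Y)$ ensures $g>0$ a.e., so the strict nestedness $X_\le(y,k(y)) \subset X_<(y',k(y'))$ holds for every pair $y<y'$ in $Y$. Using the conjugate formula \eqref{v conjugate}, for any $x_0 \in X$ with $y_0 := \f(x_0)$ I would show the supremum is uniquely attained at $y_0$: since $s_y(x_0,y_0)=k(y_0)=v'(y_0)$, strict nestedness forces $x_0 \in X_<(y,k(y))$ and $s_y(x_0,y)<v'(y)$ for $y>y_0$ (symmetrically for $y<y_0$); integrating yields $s(x_0,y)-v(y) < s(x_0,y_0) - v(y_0)$ whenever $y\ne y_0$. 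Uniform local semiconvexity of $u$ (as a supremum of a family uniformly $C^{1,1}$ in $x$) together with uniqueness of the argmax forces $\p u(x) = \{D_xs(x,\f(x))\}$ to be a continuous, singleton-valued map, hence $u\in C^1(X)$ with $Du(x)=D_xs(x,\f(x))$. Continuity of $x \mapsto D_xs_y(x,\f(x))$ is then automatic.

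Next I would show $\f \in BV_{loc}(X)$ by the Fleming--Rishel coarea characterization: a continuous function lies in $BV(U)$ iff $\int_{\mathbf R}P(\{\f>t\};U)\,dt < \infty$. Fix an open $U$ with compact closure in $X$; by continuity, $\f(\overline U)$ is a compact subinterval $[a,b]\subset Y$. Nestedness identifies $\{\f>t\}$ with $X\setminus X_\le(t,k(t))$, whose reduced boundary relative to $X$ is the smooth hypersurface $X(t,k(t))$. The bound \eqref{level set upper bound global} of Lemma \ref{L:sizes} controls $\mathcal{H}^{m-1}[X(t,k(t))\cap U]$ uniformly for $t\in[a,b]$, so the integral is finite.

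Now I would derive \eqref{distributional map gradient} by differentiating the pointwise identity $s_y(x,\f(x)) = v'(\f(x)) = k(\f(x))$, which holds on all of $X$ since $v\in C^1$. Because $s_y \in C^1$, $\f \in BV_{loc}\cap C(X)$, and $k$ is locally Lipschitz, the Ambrosio--Dal Maso chain rule for BV functions produces the distributional identities
\begin{equation*}
D\bigl[s_y(\cdot,\f)\bigr] = D_xs_y(x,\f(x))\,dx + s_{yy}(x,\f(x))\,D\f \qquad\text{and}\qquad D\bigl[k\circ\f\bigr] = k'(\f(x))\,D\f
\end{equation*}
as vector-valued Radon measures on $X$. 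Equating and rearranging yields \eqref{distributional map gradient}.

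Finally, the claim that $\f(x) \in \Dom Dk$ for $|D\f|$-a.e.\ $x$---which simultaneously validates writing $k'(\f(x))$ in the preceding step---follows from the weighted coarea formula
\begin{equation*}
\int_X (\phi\circ\f)\,d|D\f| \;=\; \int_Y \phi(t)\,P(\{\f>t\};X)\,dt,
\end{equation*}
applied to $\phi$ the indicator of $Y\setminus\Dom Dk$, which is Lebesgue-null by Rademacher's theorem applied to $k$. The main technical obstacle is the clean invocation of the BV chain rule for a composition with the only-almost-everywhere differentiable function $k$; all other steps are relatively routine applications of semiconvexity, coarea, and the explicit level-set geometry provided by nestedness.
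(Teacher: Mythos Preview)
Your argument is correct, but the route to $F\in BV_{loc}$ differs substantially from the paper's. The paper does not use Fleming--Rishel or any level-set area bound; instead it exploits the semiconvexity of $u$ directly. Since $u$ is semiconvex, each partial derivative $\partial_{x_i}u$ already lies in $BV_{loc}(X)$. From the first-order condition $Du(x)=D_xs(x,F(x))$ and non-degeneracy, one component --- say $\partial_{x_1}s(x,\cdot)$ --- is locally invertible in $y$ by the ordinary implicit function theorem, giving $F(x)=\bigl[\partial_{x_1}s(x,\cdot)\bigr]^{-1}\bigl(\partial_{x_1}u(x)\bigr)$ near each point; this displays $F$ locally as a $C^1$ function of $(\mathrm{id},\partial_{x_1}u)\in BV_{loc}$, and the Ambrosio--Dal Maso composition theorem then yields $F\in BV_{loc}$. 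Your coarea approach is equally valid and arguably more geometric --- it makes the nested level-set structure do the work --- but it leans on Lemma~\ref{L:sizes} and hence on the finite-perimeter and $W^{1,1}$ hypotheses for $D_xs_y$ built into Theorem~\ref{T:non-nested}(a), whereas the paper's argument needs only $s\in C^2$, semiconvexity of $u$, and non-degeneracy. For the $C^1$ regularity of $u$ you give a more detailed envelope/argmax argument than the paper, which simply observes that the a.e.\ gradient $Du=D_xs(\cdot,F(\cdot))$ of the semiconvex $u$ is continuous; both are fine. The chain-rule derivation of \eqref{distributional map gradient} and the $|DF|$-a.e.\ membership of $F$ in $\Dom Dk$ are handled essentially the same way in both --- the paper simply attributes the latter to \cite{AmbrosioDalMaso90}, while you spell out the coarea justification explicitly.
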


\begin{proof}
Since $k^+=k^-$ and $\log f$ and $\log g$ are bounded on compact subsets of $X$ and $Y$, 
we conclude $F$ is defined and coincides with
the continuous function $\bar F$ throughout $X$ from Theorem \ref{T:nested}(c).

It is well-known that the dual problem \eqref{dual} admits minimizers $(u,v)$ which inherit
Lipschitz and semiconvexity bounds from $s \in C^{2}(X\times Y)$ \cite{V}.  Then $u\in C^{0,1}(X)$ is differentiable Lesbesgue a.e.\ on $X$,   and 
\begin{equation}\label{ufoc}
Du(x) = D_x s(x,\f(x))
\end{equation}
for each $x\in \Dom Du$ according to \eqref{foc}.  The right hand side is continuous and bounded,
whence $u \in C^1(X)$.  

Since $u$ is also semiconvex,  its directional derivatives lie in $BV(X)$.
We next use \eqref{ufoc} to deduce $F \in BV_{loc}(X)$,  which means its directional
weak derivatives are signed Radon measures on $X$.  Fix $x' \in X$
and set $y'=F(x')\in Y$.  Since $D_x s_y(x',y')\ne 0$,  at least one of its components ---
say $\frac{\p^2 s}{\p x_1 \p y}(x,y)$ --- is non-vanishing in a neighbourhood of $(x',y')$.  The inverse function
theorem guarantees the map $(x,y) \mapsto (x,\frac{\p s}{\p x_1})$ has a continuously differentiable
inverse defined on a neighbourhood of $(x',\frac{\p s}{\p x_1}(x',y'))$.  From \eqref{ufoc} and the 
continuity of $\f$ and $Du$ we therefore deduce
$$
\f(x) = \left[ 
\frac{\p s}{\p x_1}(x,\cdot)\right]^{-1} \left(\frac{\p u}{\p x_1}(x)\right)
$$
expresses $\f$ as the composition of a $C^1$ map and a $BV$ map near $x'$.
This shows $\f \in BV_{loc}(X)$
\cite{AmbrosioDalMaso90}.

On the other hand, $k= \frac{dv}{dy} \in C^{0,1}_{loc}(Y)$ is locally Lipschitz 
according to Theorem \ref{T:non-nested}(a). 
According to \cite{AmbrosioDalMaso90}, $\f \in \Dom Dk$ on a set of $|D\f|$ full measure,
and differentiating 
$
k(\f(x)) = s_y(x,\f(x))
$
yields \eqref{distributional map gradient}
in the sense of measures; 
$D\f$ has no jump part since $\f$ is continuous.
\end{proof}

\begin{corollary}[Maps are $C^1$ where level set speed is non-zero]
\label{C:non-zero speed}
The hypotheses of Proposition \ref{P:BV} imply
$\| \f \|_{C^{0,1}(X')} \le \ell^{-1}\| D_x s_y \|_{C(X' \times F(X'))}<+\infty$
for any open set $X' \subset X$ having a speed limit
\begin{equation}\label{Clarke subdifferential bound}
\ell := \inf_{x \in X'} k'(\f(x)) -s_{yy}(x,\f(x)) >0.
\end{equation}
They also imply $\f$ is continuously differentiable on $X \setminus \f^{-1}(Z)$
at precisely those points  $x$ 
where $k'(\f(x)) > s_{yy}(x,\f(x))$.
\end{corollary}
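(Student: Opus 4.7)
The plan is to exploit the distributional identity \eqref{distributional map gradient} provided by Proposition \ref{P:BV}, namely $\phi(x)\,D\f = D_x s_y(x,\f(x))$ as equality of $\R^m$-valued Radon measures on $X$, where $\phi(x) := k'(\f(x)) - s_{yy}(x,\f(x))$ is defined $|D\f|$-almost everywhere. First I would establish the Lipschitz estimate. On $X'$ the hypothesis gives $\phi \ge \ell > 0$, so for any Borel $E \subset X'$,
\[
|D\f|(E) \le \ell^{-1} \int_E |D_x s_y(x,\f(x))|\,dx \le \ell^{-1}\|D_x s_y\|_{C(X'\times \f(X'))}\,{\mathcal H}^m(E),
\]
where continuity of $\f$ (Theorem \ref{T:nested}(c)) has been used to restrict the supremum to $X' \times \f(X')$. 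Thus $D\f \ll {\mathcal H}^m$ on $X'$ with $L^\infty$ density bounded by $\ell^{-1}\|D_x s_y\|_{C(X'\times \f(X'))}$. Combined with the absence of any jump part of $\f$ (again from Theorem \ref{T:nested}(c)), the $BV$ structure theorem rules out any Cantor part as well, upgrading $\f$ from $BV_{loc}$ to $W^{1,\infty}_{loc}(X')$ and yielding the stated Lipschitz bound on each connected component of $X'$.

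For the sufficiency half of the $C^1$ characterization, suppose $x_0 \in X \setminus \f^{-1}(Z)$ with $k'(\f(x_0)) > s_{yy}(x_0,\f(x_0))$. Since $\f(x_0) \notin Z$, Theorem \ref{T:non-nested}(b) makes $k'$ continuous near $\f(x_0)$; together with continuity of $\f$ and $s_{yy}$, the coefficient $\phi$ extends continuously to a neighbourhood $U \ni x_0$ on which $\phi \ge \ell' > 0$. The Lipschitz estimate just proved applies to $U$, making $\f$ classically differentiable Lebesgue-a.e.\ on $U$; at each such point $D\f(x) = D_x s_y(x,\f(x))/\phi(x)$. The right-hand side is continuous on $U$, so the weak gradient of the Lipschitz map $\f$ agrees a.e.\ with a continuous function, which suffices (for example by mollification) to conclude $\f \in C^1(U)$. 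For necessity, if $\f$ is classically $C^1$ on a neighbourhood of some $x_0 \in X \setminus \f^{-1}(Z)$, then \eqref{distributional map gradient} reduces there to the pointwise identity $\phi(x_0) D\f(x_0) = D_x s_y(x_0,\f(x_0))$; non-degeneracy forces the right-hand side to be nonzero, so $\phi(x_0) \ne 0$, and Corollary \ref{C:dynamic nesting} (nestedness implies $k' - s_{yy} \ge 0$) promotes this to $k'(\f(x_0)) > s_{yy}(x_0,\f(x_0))$.

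The main obstacle will be making the ``division through'' in the first paragraph fully rigorous: the scalar $\phi$ is only defined $|D\f|$-almost everywhere a priori, so one must verify carefully that the pointwise bound $\phi \ge \ell$ assumed on $X'$ produces the measure-theoretic domination $|D\f| \ll {\mathcal H}^m$ on arbitrary Borel subsets --- not merely on those where $\phi$ is defined. Once this is handled cleanly through the $BV$ structure theory (no jump part because $\f$ is continuous, no Cantor part once $|D\f| \ll {\mathcal H}^m$), the remaining steps are routine pointwise and implicit-function type manipulations.
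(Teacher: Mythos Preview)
Your proof is correct and follows essentially the same route as the paper's: both arguments divide the distributional identity \eqref{distributional map gradient} through by the coefficient $\phi = k'(\f) - s_{yy}$, deduce the Lipschitz bound from $\phi \ge \ell$, and read off continuity of $D\f$ from continuity of $D_x s_y/\phi$ outside $\f^{-1}(Z)$. The paper's proof is considerably more terse---it does not spell out the $BV$ structure-theorem step you supply, nor your invocation of Corollary \ref{C:dynamic nesting} for the sign in the necessity direction---so your version is in fact more complete, and the measure-theoretic concern you flag (that $\phi$ is defined only $|D\f|$-a.e.) is handled correctly by your argument.
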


\begin{proof}
The right hand side of \eqref{distributional map gradient} belongs to $C(X)$,
so the left hand side is also continuous and bounded.  
Thus we deduce 
$\ell \| Df \|_{L^\infty(X')} \le \| D_x s_y \|_{L^\infty(X' \times F(X'))} \le \|s\|_{C^{2}(X \times Y)} <+\infty$
as desired.

Since $k'(\f(x))-s_{yy}(x,\f(x))$ depends continuously on $x \in X \setminus \f^{-1}(Z)$,
\eqref{distributional map gradient} implies 
the same is true of $D\f(x)$ provided $k'(\f(x))>s_{yy}(x,\f(x))$.
Conversely, non-degeneracy of $D_x s_y$ prevents 
$D\f$ from being locally bounded 
where $k'(\f(x)) -s_{yy}(x,\f(x))$ vanishes.
\end{proof}

\medskip

We now proceeed to the main result of this section,
which uses condition \eqref{Clarke subdifferential bound}
to bootstrap higher regularity for $\f$ and $u$ 
from that already established for $v$ 

\begin{corollary}[Regularity of optimal maps and potentials]
\label{C:regularity}
Suppose $X' \subset X \subset \R^m$, $Y' \subset Y\subset \R$, $(s,f,g)$ and $k$ 
satisfy the hypotheses of Theorem \ref{T:h regular} for some $r \ge 1$.
If the speed limit condition $\ell>0$ from \eqref{Clarke subdifferential bound} holds, then
the restriction of the optimal map
$F:X \to Y$ to $X'$ lies in $C^{r,1}(X')$ 
and  $\|F\|_{C^{r,1}(X')}$ is controlled by $\ell$,
$\|s_y\|_{C^{r,1}(X' \times Y')}$ and $\|k\|_{C^{r,1}(Y')}$. 
Furthermore, if $s \in C^{r+1,1}(X' \times Y')$ then there exist minimizers $(u,v)$ 
of \eqref{dual} whose restrictions lie in $C^{r+1,1}(X') \times C^{r+1,1}(Y')$
with norms controlled by the same constants plus $\|s\|_{C^{r+1,1}(X' \times Y')}$.
\end{corollary}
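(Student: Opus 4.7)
The plan is a bootstrap argument driven by the identity
\begin{equation}\label{planDF}
(k'(F(x))-s_{yy}(x,F(x)))DF(x) = D_x s_y(x,F(x))
\end{equation}
from Proposition \ref{P:BV}, rewritten as
\begin{equation}\label{planDFsolved}
DF(x) = \frac{D_x s_y(x,F(x))}{k'(F(x))-s_{yy}(x,F(x))},
\end{equation}
using that the denominator is bounded below by $\ell>0$ throughout $X'$ by hypothesis \eqref{Clarke subdifferential bound}. The base case, $F \in C^{0,1}(X')$ with $\|F\|_{C^{0,1}(X')} \le \ell^{-1}\|D_x s_y\|_{C(X' \times Y')}$, is exactly Corollary \ref{C:non-zero speed}.

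The inductive step is the following: suppose $F \in C^{j,1}(X')$ for some $0 \le j \le r-1$. Since $s_y \in C^{r,1}(X' \times Y')$ implies $D_x s_y, s_{yy} \in C^{r-1,1}(X' \times Y')$, and $k\in C^{r,1}(Y')$ implies $k' \in C^{r-1,1}(Y')$, the standard composition rule (a $C^{a,1}$ function composed with a $C^{b,1}$ function lies in $C^{\min(a,b),1}$, since $C^{i,1}$ is closed under composition and algebraic operations for each $i$) gives
$$
D_x s_y(\cdot,F(\cdot)),\ s_{yy}(\cdot,F(\cdot)),\ k'(F(\cdot)) \in C^{j,1}(X').
$$
Because $k'(F(\cdot))-s_{yy}(\cdot,F(\cdot)) \ge \ell>0$ is bounded away from zero, the quotient in \eqref{planDFsolved} lies in $C^{j,1}(X')$, and its $C^{j,1}$ norm is controlled by $\ell^{-1}$, $\|s_y\|_{C^{j+1,1}(X'\times Y')}$, $\|k\|_{C^{j+1,1}(Y')}$, and $\|F\|_{C^{j,1}(X')}$. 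Hence $DF \in C^{j,1}(X')$, i.e., $F\in C^{j+1,1}(X')$, with explicit norm control. Iterating from $j=0$ up to $j=r-1$ yields $F\in C^{r,1}(X')$ with norm bounded by $\ell$, $\|s_y\|_{C^{r,1}(X' \times Y')}$, and $\|k\|_{C^{r,1}(Y')}$.

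For the potentials, I would use the first-order conditions \eqref{foc}. The husband's potential satisfies $v'(y) = k(y)$ on $Y'$ by construction (Theorem \ref{T:nested}(b) proof), so $k \in C^{r,1}(Y')$ immediately yields $v \in C^{r+1,1}(Y')$ with the expected norm bound (up to a harmless additive constant). For $u$, from $Du(x) = D_x s(x,F(x))$ on $X'$ and the just-established $F \in C^{r,1}(X')$: since $s \in C^{r+1,1}(X' \times Y')$ gives $D_x s \in C^{r,1}(X' \times Y')$, the composition rule yields $D_x s(\cdot,F(\cdot)) \in C^{r,1}(X')$, hence $u \in C^{r+1,1}(X')$ with norm bounded by $\ell$, $\|s\|_{C^{r+1,1}(X' \times Y')}$, and $\|k\|_{C^{r,1}(Y')}$.

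The heavy lifting—existence of a continuous optimal map, continuity of $k$, the algebraic relation \eqref{planDF}, the positivity of the denominator, and $C^{r,1}$ regularity of $k$—has all been done in earlier results, so the only real work is the bookkeeping of the bootstrap and its norm estimates. The one step that requires minor care is checking that the composition and quotient operations truly preserve $C^{j,1}$ regularity with controlled norms when $j=0$ (where $C^{0,1}$ is the Lipschitz class, so one uses that $C^{0,1}$ is an algebra containing reciprocals of functions bounded away from zero); everything else is standard chain-rule manipulation.
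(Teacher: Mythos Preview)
Your proposal is correct and follows essentially the same route as the paper: the paper applies the implicit function theorem directly to the relation $k(F(x)) = s_y(x,F(x))$ (noting $\partial_y[k(y)-s_y(x,y)] \ge \ell > 0$), which is exactly your bootstrap \eqref{planDFsolved} packaged as a single invocation, and then deduces the regularity of $u$ and $v$ from $Du(x)=D_xs(x,F(x))$ and $v'=k$ just as you do. Your version simply unpacks the implicit-function step into an explicit induction with norm bookkeeping.
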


\begin{proof}
Since $k=dv/dy$,  the regularity asserted for $v$ follows directly from Theorem \ref{T:h regular}.
That asserted for $\f$ then follows from the implcit function theorem after recalling that $k(F(x)) = s_y(x,F(x))$; this in turn implies that asserted 
for $u$ through \eqref{ufoc}.
\end{proof}
\begin{appendices}
\section{Self contained proof that nesting for all marginals reduces dimension}
We now offer a self contained proof of Proposition \ref{P:pseudo-index}.  Our proof requires an additional definition and preliminary lemma.
We say that \textit{the level sets of $x \mapsto \frac{\partial s}{\partial y}(x,y)$ are independent of $y$} if  for any $y_0,y_1 \in Y$, $S \subseteq X$ is a level set of $x \mapsto \frac{\partial s}{\partial y}(x,y_0)$ if and only if it is a level set of $x \mapsto \frac{\partial s}{\partial y}(x,y_1)$.  In other words, for each $k_0 \in \R$
there exists $k_1 \in \R$ such that $X(y_0,k_0)=X(y_1,k_1)$,  and conversely for each $k_1$ there exists $k_0$ 
satisfying the same conclusion.

\begin{lemma}\label{L:pseudo-index preparation}
Under the assumptions of Proposition \ref{P:pseudo-index}, the surplus $s$ takes pseudo-index form 
if and only if the level sets of $x \mapsto \frac{\partial s}{\partial y}(x,y)$ are independent of $y$.
 If $s$ has pseudo-index form then the mixed partials
$\frac{\p^2 \sigma}{\p I \p y} = \frac{\p^2 \sigma}{\p y \p I}$ exist and are continuous throughout $I(X) \times Y$.
\end{lemma}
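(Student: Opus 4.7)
The forward direction ($\Rightarrow$) is essentially a chain-rule computation. If $s(x,y)=\alpha(x)+\sigma(I(x),y)$, then $s_y(x,y)=\sigma_y(I(x),y)$. Working in local coordinates in which the first coordinate equals $I(x)$ (possible wherever $DI\ne 0$), the function $s_y(\cdot,y)$ depends only on the first coordinate, so smoothness of $s$ forces $I\mapsto \sigma_y(I,y)$ to be $C^1$. The non-degeneracy of $s$ combined with the chain-rule identity $D_x s_y = \p_I\sigma_y(I(x),y)\, DI(x)$ forces both $DI\ne 0$ and $\p_I\sigma_y\ne 0$ throughout the connected set $I(X)\times Y$, so $\sigma_y(\cdot,y)$ is strictly monotone in $I$ for each $y$. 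Its level sets in $X$ therefore coincide with the level sets of $I$ itself and are independent of $y$.

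For the converse ($\Leftarrow$), I would fix a reference $y_0\in Y$ and set
\[
I(x):=s_y(x,y_0),\qquad \alpha(x):=s(x,y_0).
\]
Non-degeneracy gives $DI=D_x s_y(\cdot,y_0)\ne 0$, so $I\in C^1$ is a submersion and $I(X)$ is a connected open interval. The hypothesis that the level sets of $s_y(\cdot,y)$ do not depend on $y$ means that $s_y(x,y)=s_y(x',y)$ whenever $I(x)=I(x')$, so there is a well-defined function $\tau:I(X)\times Y\to\R$ with $s_y(x,y)=\tau(I(x),y)$. Continuity of $\tau$ and of $\p\tau/\p y$ in $(I,y)$ follow from $s\in C^2$, while differentiating the identity $s_y(x,y)=\tau(I(x),y)$ in $x$ and contracting against $DI(x)/|DI(x)|^2$ gives
\[
\tau_I(I(x),y)=\frac{D_x s_y(x,y)\cdot D_x s_y(x,y_0)}{|D_x s_y(x,y_0)|^2},
\]
which is automatically constant on level sets of $I$ and is jointly continuous in $(x,y)$, hence in $(I,y)$. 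Thus $\tau\in C^1(I(X)\times Y)$.

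With $\tau\in C^1$ in hand, I would define
\[
\sigma(I,y):=\int_{y_0}^y \tau(I,y')\,dy'.
\]
The fundamental theorem of calculus yields $\sigma_y=\tau$, and differentiation under the integral (justified by continuity of $\tau_I$) gives $\sigma_I(I,y)=\int_{y_0}^y \tau_I(I,y')\,dy'$. Hence both mixed partials $\sigma_{yI}$ and $\sigma_{Iy}$ exist, equal $\tau_I$, and are continuous on $I(X)\times Y$. Finally, integrating $s_y$ in $y$ from $y_0$ to $y$ yields
\[
s(x,y)-s(x,y_0)=\int_{y_0}^y \tau(I(x),y')\,dy'=\sigma(I(x),y),
\]
so $s=\alpha+\sigma\circ(I,\mathrm{id}_Y)$ as required.

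The main technical point I expect to require care is the differentiability of $\tau$ in its first argument: $\tau$ is defined only through a factorization, and a priori one knows only that $\tau$ and its $y$-derivative are continuous. The chain-rule identity displayed above resolves this in one stroke, using non-degeneracy to avoid division by zero; once $\tau\in C^1$ is secured, the remainder of the construction is routine.
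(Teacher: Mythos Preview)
Your approach is essentially the same as the paper's in both directions, and in the converse direction your use of the intermediate function $\tau$ and the explicit formula
\[
\tau_I(I(x),y)=\frac{D_x s_y(x,y)\cdot D_x s_y(x,y_0)}{|D_x s_y(x,y_0)|^2}
\]
makes the $C^1$-regularity of $\sigma$ and the equality of mixed partials more transparent than the paper's terser treatment.

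There is, however, a small circularity in your forward direction. To invoke the chain-rule identity $D_x s_y = \partial_I\sigma_y(I(x),y)\,DI(x)$ you need $\partial_I\sigma_y$ to exist; but you have established this only where $DI\ne 0$, by straightening $I$ into a coordinate. So the identity cannot itself be used to rule out points where $DI(x_0)=0$ (and such points are not excluded a priori: if $I(x)=x_1^3$ and $\sigma_y(I)=I^{1/3}$ then $s_y(x,y)=x_1$ is perfectly $C^1$ while $DI(0)=0$). The paper closes this by working with $\sigma_I$ instead of $\sigma_{yI}$: since $\sigma\in C^1$ by hypothesis, one has $D_x s(x,y)=D\alpha(x)+\sigma_I(I(x),y)\,DI(x)$ everywhere, and if $DI(x_0)=0$ this makes $D_x s(x_0,\cdot)$ constant in $y$, whence $D_x s_y(x_0,y)=0$, contradicting non-degeneracy. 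Once $DI\ne 0$ is secured globally by this device, your coordinate argument and chain-rule identity become valid throughout $X$ and the rest of your forward direction goes through.
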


\begin{proof}
If $s(x,y) = \sigma(I(x),y) +\alpha(x)$ is of  pseudo-index form, we have
\begin{equation}\label{c1}
\frac{\partial s}{\partial y}(x,y)=\frac{\partial \sigma}{\partial y}(I(x),y).
\end{equation}
Therefore, for every $y$, each level set of $I(x)$ is \emph{contained} in a level set of  $x \mapsto \frac{\partial s}{\partial y}(x,y)$.  If, in addition, we show that $I \mapsto \frac{\partial \sigma}{\partial y}(I,y)$ is injective, then the opposite inclusion will hold, and so the level sets of  $x \mapsto \frac{\partial s}{\partial y}(x,y)$ will be exactly the level sets of $I(x)$, which are clearly independent of $y$.

If $\sigma \in C^2$, then
\begin{equation}\label{c1c2}
D_x\frac{\partial s}{\partial y}(x,y) = \frac{\partial^2 \sigma}{\partial I \partial y}(I(x),y)D I(x),
\end{equation}
the non-degeneracy condition implies that $\frac{\partial^2 \sigma}{\partial I \partial y}$ is nowhere vanishing on $I(X) \times (a,b)$.  As the continuous image of a connected set, $I(X)$ is connected, and hence an interval;  we must therefore have either $\frac{\partial^2 \sigma}{\partial I \partial y}<0$ everywhere or $\frac{\partial^2 \sigma}{\partial I \partial y}>0$ everywhere (i.e., the Spence-Mirrlees sub- or super-modularity condition holds).  Therefore,
$$
I \mapsto \frac{\partial \sigma}{\partial y}(I,y)
$$
is injective, and so we conclude that the level sets of $x \mapsto \frac{\partial s}{\partial y}(x,y)$ are independent of $y$ .

If $\sigma \in C^1\setminus C^2$,  we shall use the non-degeneracy of $s\in C^2$ and
identity \eqref{pseudo-index} to argue that the mixed partials exist and are continuous,
in which case the analysis of the preceding paragraph applies. 
For fixed $x$, non-degeneracy implies
\begin{equation}\label{c1c2c3}
D_x s (x,y) = \frac{\p \sigma}{\p I} (I(x),y) DI(x)
\end{equation}
cannot vanish in any subinterval of $(a,b)$,  whence $DI(x) \ne 0$.  Using the inverse function theorem (e.g.
along an integral curve of the vector field $DI$),   we deduce differentiability of $\sigma_y(I,y)$ with respect to $I$, and continuous dependence of the resultant derivative on both $(I,y)$ from \eqref{c1} noting $s\in C^2$, thereby justifying \eqref{c1c2} and the resulting conclusions.  On the other hand,  we can also 
differentiate \eqref{c1c2c3} with respect to $y$ to obtain the other mixed partial, and
compare the result to \eqref{c1c2} to deduce the equality of mixed partials.

Conversely, suppose that the level sets of $x \mapsto \frac{\partial s}{\partial y}(x,y)$ are independent of $y$ and set $I(x) =\frac{\partial s}{\partial y}(x,\frac{a+b}2)$.   We claim that, for all $y$,

$$
x \mapsto s(x,y) -s(x,\frac{a+b}2)
$$
is constant along the level sets of $I(x)$.  This will complete the proof, as then we can define unambiguously the function $\sigma(z,y) :=s(x,y) -s(x,\frac{a+b}2)$, where $x \in I^{-1}(z)$, which implies
$$
s(x,y) =\sigma(I(x), y)+s(x,\frac{a+b}2)
$$
and hence $\sigma \in C^1$, noting continuity of $DI(x) = D_x s_y(x,\frac{a+b}2) \ne 0$.

To see the claim, let $x,\bar x$ lie in a level set of $I$, ie, $I(x) =I(\bar x)$; we need to show $h(y):= s(x,y) -s(x,\frac{a+b}2)-s(\bar x,y) +s(\bar x,\frac{a+b}{2})=0$ for all $y \in (a,b)$.    We clearly have $h(\frac{a+b}2) =0$.  On the other hand, differentiating yields
$$
h'(y) = \frac{\partial s}{\partial y}(x,y)- \frac{\partial s}{\partial y}(\bar x,y).
$$
Now $I(x) =I(\bar x)$ implies $h'(\frac{a+b}2)=0$. The assumed independence of the level sets therefore 
asserts $h'(y) =0$ for all $y \in (a,b)$,  yielding the desired result.
\end{proof}

We are now ready to prove the proposition.

\begin{proof}[Proof of Proposition \ref{P:pseudo-index}]
First assume $s$ has a  pseudo-index structure; by nondegeneracy, we can assume, without loss of generality, that $\frac{\partial^2 \sigma}{\partial I \partial y}>0$;  the derivatives in question exist by Lemma \ref{L:pseudo-index preparation}.
 In this case, for any probability measures $\mu$ and $\nu$, and $y_1>y_0 \in Y$, the same 
lemma implies that sets  $X_<(y_0,k^{\pm}(y_0)) $ and $X_<(y_1,k^{\pm}(y_1))$ correspond 
exactly to sublevel sets of $I(x)$:

$$
X_<(y_i,k^{\pm}(y_i)) =\{x: I(x) <c^{\pm}(y_i)\}
$$
where 
$[c^-(y),c^+(y)]$ is the maximal interval such that $I_{\#}\mu[(-\infty,z)] =\nu[(-\infty, y)]$ for all $z \in[c^-(y),c^+(y)]$.
  As we have 
\begin{equation}\label{eqn: nesting verification}
\mu \big [X_<(y_0,k^+(y_0)) \big ]=\nu[(0,y_0)] \leq \nu[(0,y_1)] =\mu \big[X_< (y_1,k^-(y_1)) \big ]
\end{equation}
this implies that $c^+(y_0) \leq c^-(y_1)$ and so
\begin{equation}\label{eqn: nesting set containment}
X_<(y_0,k^+(y_0)) \subset X_<(y_1,k^-(y_1)).
\end{equation}
If, in addition, $\nu[(y_0,y_1)] >0$, the inequality in \eqref{eqn: nesting verification} is strict, and so we must have  strict containment in \eqref{eqn: nesting set containment}.  Therefore, the model is nested.



On the other hand, assume that $s$ does not have a pseudo-index form; we will construct measures $\mu$ and $\nu$ for which nestedness fails.

We claim that there exist $y_0,y_1$, $k_0,k_1$ such that the pairwise disjoint, open sets 

\begin{eqnarray*}
A_{<<}& =&\{x \mid \frac{\partial s(x,y_0)}{\partial y} <k_0,  \frac{\partial s(x,y_1)}{\partial y} <k_1 \}\\
A_{<>}& =&\{x \mid \frac{\partial s(x,y_0)}{\partial y} <k_0,  \frac{\partial s(x,y_1)}{\partial y} >k_1 \}\\
A_{><}& =&\{x \mid \frac{\partial s(x,y_0)}{\partial y} >k_0,  \frac{\partial s(x,y_1)}{\partial y} <k_1 \}\\
A_{>>}& =&\{x \mid \frac{\partial s(x,y_0)}{\partial y} >k_0,  \frac{\partial s(x,y_1)}{\partial y} >k_1 \}\\
\end{eqnarray*}
are all  nonempty.  Their union is $X$, minus a set of Hausdorff dimension $n-1$, thanks to non-degeneracy and Proposition \ref{P:indifference structure}.

We first show that the claim implies the desired result, and then prove the claim.   We can assume, after rescaling, that $Y=(0,1)$.  Assuming without loss of generality that $y_1 >y_0$, for some small $\epsilon >0$, we can take $\mu$ to be a probability measure assigning the following values: 

\begin{eqnarray*}
\mu(A_{<<})&=&y_0-\epsilon,\\
 \mu(A_{<>})&=&\epsilon,\\ 
\mu(A_{><})&=&y_1-y_0+\epsilon,\\ 
\mu(A_{>>})&=&1-y_1-\epsilon. 
 \end{eqnarray*}

Taking $\nu$ to be uniform measure, the above choice of $\mu$ leads to $k(y_0) = k_0$ and $k(y_1)=k_1$.  In particular, this implies that the (nonempty) set  $A_{><} \subseteq X_{\leq}(y_1,k(y_1))$ but that $A_{><}$ is \emph{disjoint} from  $X_{\leq}(y_0,k(y_0))$, violating nestedness.

  To see the claim, we invoke the lemma to conclude that there exists $y_0, y_1$ and $k_0$, such that the  level set 
$$
X(y_0,k_0)= \{x \mid \frac{\partial s(x,y_0)}{\partial y}=k_0\}
$$
is not a level set of $x \mapsto \frac{\partial s(x,y_1)}{\partial y}$.  Choose any $\bar x \in X(y_0,k_0)$ and set $k_1 =  \frac{\partial s(\bar x,y_1)}{\partial y}$ and $X(y_1,k_1) :=\{x \mid \frac{\partial s( x,y_1)}{\partial y}=k_1\}$.  As  we cannot have

$$
X(y_0,k_0) =X(y_1,k_1),
$$
we assume, without loss of generality, that there exists $x \in X(y_1,k_1)$ such that $\frac{\partial s( x,y_0)}{\partial y} <k_0$.

Now, if the $C^1$ hypersurfaces $X(y_1,k_1)$  and $X(y_0,k_0)$ interesect tranversally at $\bar x$, the result follows easily: in this case   $ X(y_1,k_1)$ must intersect both $X_<(y_0,k_0)$ and $X_>(y_0,k_0)$ and at the intersection points, one can move slightly off the set $ X(y_1,k_1)$, into either of $ X_<(y_1,k_1)$ or $ X_>(y_1,k_1)$, and remain in  $X_<(y_0,k_0)$ or $X_>(y_0,k_0)$.

If not, the intersection in tangential at $\bar x$.  The nondegeneracy condition ensures that we may choose $k_1'$ close to $k_1$ such that  the set $X(y_1,k_1') $ intersects $X_{>}(y_0,k_0)$ near $\bar x$ (simply by moving a small distance from $\bar x$ in the direction $D_x\frac{\partial s}{\partial y}(\bar x, y_0)$ to obtain a new point $\bar x'$ and setting $k_1' = \frac{\partial s}{\partial y}(\bar x', y_1)$).  This implies that $A_{><}$ and $A_{>>}$ are both nonempty, after replacing $k_1$ with $k_1'$.  Nondegeneracy and the implicit function theorem  also implies that $X(y_1,k_1')$ intersects  $X_{<}(y_0,k_0)$ near $x$, which implies that $A_{<<}$ and $A_{<>}$ are both nonempty, completing the proof of the claim.
\end{proof}

\end{appendices}

\def\cprime{$'$}

\end{document}